\documentclass[a4paper,10pt]{article}
\usepackage[utf8]{inputenc}
\usepackage{commands,geometry}
\usepackage{authblk}
\usepackage{xcolor}
\usepackage{appendix}

\usepackage{lmodern}
\linespread{1.1}

\title{Varieties of truth definitions}

\author[1]{Piotr Gruza}
\author[2]{Mateusz {\L}e{\l}yk}
\affil[1]{Doctoral School of Exact and Natural Sciences, University of Warsaw, mailto: p.gruza3@uw.edu.pl}
\affil[2]{Faculty of Philosophy, University of Warsaw, mailto: mlelyk@uw.edu.pl}

\begin{document}

\maketitle

\begin{abstract}
We study the structure of the partial order induced by the definability relation on definitions of truth for the language of arithmetic. Formally, a definition of truth is any sentence $\alpha$ which extends a weak arithmetical theory (which we take to be $\EA$) such that for some formula $\Theta$ and any arithmetical sentence $\varphi$, $\Theta(\qcr{\varphi})\equiv \varphi$ is provable in $\alpha$. We say that a sentence $\beta$ is definable in a sentence $\alpha$, if there exists an unrelativized translation from the language of $\beta$ to the language of $\alpha$ which is identity on the arithmetical symbols and such that the translation of $\beta$ is provable in $\alpha$. Our main result is that the structure consisting of truth definitions which are conservative over the basic arithmetical theory forms a countable universal distributive lattice. Additionally, we generalize the result of Pakhomov and Visser showing that the set of (G\"odel codes of) definitions of truth is not $\Sigma_2$-definable in the standard model of arithmetic. We conclude by remarking that no $\Sigma_2$-sentence, satisfying certain further natural conditions, can be a definition of truth for the language of arithmetic. 
\end{abstract}

\paragraph{Acknowledgement} This research was supported by the NCN SONATA grant "Truth: Between Disquotation and Compositionality." (2019/35/D/HS1/04502)
\section{Introduction}


The main goal of this paper is to understand in how many alternative ways can a definition of truth be given. More precisely: assuming that a first-order language $\mathcal{L}$ has been fixed, how many different truth definitions for $\mathcal{L}$ are there? The far reaching objective is to understand essential properties that any truth definition for $\mathcal{L}$ must have. In particular, we would like to understand whether the definitions of truth need to be compositional.\footnote{This question, along with its formal paraphrase, was suggested by Albert Visser (see \cite{viss19enayat})} We say more about this problem in Section \ref{sec_between_disq_and_comp}. 

\medskip
In what follows, we define a definition of truth for $\mathcal{L}$ to be any sentence $\alpha$, such that for some formula $\Theta$ of the language of $\alpha$, all Tarski biconditionals for $\mathcal{L}$ are provable in $\alpha$. More precisely, for $\alpha$ to be a truth definition, there must be a formula $\Theta(x)$, such that for every $\mathcal{L}$-sentence $\psi$, the sentence
\[\Theta(\qcr{\psi})\leftrightarrow\psi\]
is provable in $\alpha$. We specialize to the case when $\mathcal{L}$ is the usual language of first-order arithmetic, denoted by $\LA$. We assume that all the truth definitions extend a weak finitely axiomatised fragment of Peano Arithmetic, which provides a metatheory for formalising the syntax for $\LA$. The set of all truth definitions for $\LA$ is denoted by $\DEF$. For most of the paper, we are interested in the subset of $\DEF$ consisting of those truth definitions for $\LA$ which are (syntactically) conservative extensions of the basic arithmetical theory. The set of such truth definitions is denoted by $\cDEF$. To compare the definitions of truth, we consider the definability pre-order $\LHD$ on $\DEF$ and determine its various properties. Informally speaking, for two truth definitions $\alpha$, $\beta$, we put $\alpha\LHD\beta$ if there is a translation $\sigma$ from the language of $\alpha$ to the language of $\beta$ which preserves the symbols of $\mathcal{L}$ such that $\sigma[\alpha]$ is provable in $\beta$.

\medskip
The main result of this paper is that the structure $\langle \cDEF, \LHD\rangle$ is a countable universal distributive lattice, i.e. it is a distributive lattice which embeds every countable distributive lattice. The proof proceeds by constructing an embedding of the lattice structure of the countable atomless Boolean algebra into a sublattice of $\langle \cDEF, \LHD\rangle$, which consists uniquely of truth definitions, which are provable in the canonical truth theory $\CT^-$. Restricting to conservative truth definitions is required to make sure that the result is non-trivial, in particular that it does not immediately follow from the known facts about the Lindenbaum algebras of r.e. extensions of basic arithmetical systems. Thus, we take our main result to witness a great diversity among the possible definitions of truth. We provide some further information about the structure of $\langle\DEF, \LHD\rangle$ and include some results of the possible complexities of truth definitions and the connection between truth definitions and compositional Tarski clauses.

\medskip
There are at least three important predecessors to the current paper: in \cite{pakhomov_visser} it was proved that, in the order induced by the relation of definability, there are no minimal finite axiomatization (in an extended language) of a given non-finitely axiomatizable theory (such as $\PA$, ZF, $\ldots$). In \cite{viss19enayat} the same reasoning was applied to show that, in the order induced by definability, there are no minimal definitions of truth for arithmetic. \cite{enayat_lelyk_axiom} shows, among other things, that the ordering induced by provability on a very concrete set of natural finite axiomatizations (which carry truth defintitions for the language of arithmetic) of $\PA$ is a countable universal partial order.

\medskip
The structure of the paper is as follows: in Section 2 we introduce all the relevant preliminaries including the first-order arithmetic, axiomatic theories of truth, interpretability and (basic) lattice theory. In Section 3 we prove our main result. In Section 4 we provide some further details on the structure of the lattice $\langle \cDEF, \LHD\rangle$, in particular showing another way of realising the embedding of the countable atomless Boolean algebra into $\cDEF$. The image of the latter embedding consists uniquely of truth definitions which are \emph{semantically} conservative over the base arithmetical theory, that is every model of the base theory can be expanded to a model of any such truth definition. Last but not least, in this section we prove that no truth definition, which satisfies some further natural conditions, can be a $\Sigma_2$-sentence. We conclude in Section 5 with a number of open questions.


\section{Preliminaries}


\subsection{Arithmetical Preliminaries}\label{sec_arithm}


We take the language of arithmetic, denoted by $\LA$, to be the first-order with equality language of the signature $\langle0,1,+\times,<\rangle$, where $0$ and $1$ are constant symbols, $+$ and $\times$ are binary function symbols, and $<$ is a binary relation symbol. In the definition below we recall the standard definition of the arithmetical hierarchy, $\mathcal{L}$ being any language extending $\LA$:

\begin{definition}
A formula is \emph{atomic} iff it is of the form $R(t_{1}, ..., t_{n})$ or $t_{1}=t_{2}$, where $R$ is an $n$-ary relation symbol and $t_{1}$, $t_{2}$, ..., $t_{n}$ are terms.

\medskip

The class of \emph{quantifier-free} formulae of $\mathcal{L}$ is the smallest set containing all atomic formualae of $\mathcal{L}$ and closed under propositional connectives, i.e. if $\varphi$ and $\psi$ are quantifier-free, then so are $\lnot\varphi$ and $\varphi\land\psi$.

\medskip 

The class of $\Delta_{0}$ formulae of $\mathcal{L}$ is the smallest set containing all quantifier-free formulae of $\mathcal{L}$ and closed under the adjoin of bounded quantifiers, i.e. if $\varphi$ is a $\Delta_{0}$ formula, $x$ is a variable and $t$ is a term not containing $x$, then $\exists x(x<t\land\varphi)$ (abbreviated $\exists x<t. \varphi$) and $\forall x(x<t\rightarrow\varphi)$ (abbreviated $\forall x<t.\varphi$) are $\Delta_0$ as well.

\medskip 

The classes of $\Pi_{0}$ and $\Sigma_{0}$ formulae of $\mathcal{L}$ are exactly the class of $\Delta_{0}$ formulae of $\mathcal{L}$.

\vspace{0.2cm}
For a natural number $n$ greater than $0$, the class of $\Pi_{n}$ formulae of $\mathcal{L}$ is the smallest set containing all $\Sigma_{n-1}$ formulae of $\mathcal{L}$ and closed under adding the universal quantifier.

\vspace{0.2cm}
For a natural number $n$ greater than $0$, the class of $\Sigma_{n}$ formulae of $\mathcal{L}$ is the smallest set containing all $\Pi_{n-1}$ formulae of $\mathcal{L}$ and closed under adding the existential quantifier.

\vspace{0.2cm}
For a natural number $n$ and a theory $Th$, the class of $\Delta_{n}(Th)$ formulae of $\mathcal{L}$ consists of formulae $\varphi(\overline{x})\in\mathcal{L}$ for which there exist a $\Sigma_{n}$ formula $\sigma(\overline{x})\in\mathcal{L}$ and a $\Pi_{n}$ formula $\pi(\overline{x})\in\mathcal{L}$ such that $Th\vdash\forall\overline{x}(\varphi(\overline{x})\leftrightarrow\sigma(\overline{x}))$ and $Th\vdash\forall\overline{x}(\varphi(\overline{x})\leftrightarrow\pi(\overline{x}))$. In particular, $\Delta_{0}(Th)$ is exactly the set of formulae $Th$-provably equivalent to some $\Delta_{0}$ formulae.

\vspace{0.2cm}
Apart from the $\Sigma_n\setminus\Pi_n$ measures of complexity we employ also a simpler one. If $\varphi$ is any sentence, then $\dpt(\varphi)$ denotes its logical depth, i.e. maximal nesting of connectives and quantifiers. $\dpt(\varphi)$ is defined recursively: if $\varphi$ is atomic, then $\dpt(\varphi) = 0$, $\dpt(\neg\varphi) = \dpt(\exists z.\varphi) = \dpt(\varphi) +1$ and $\dpt(\varphi\wedge \psi) = \max\{\dpt(\varphi), \dpt(\psi)\}+1$.
\end{definition}

\begin{definition}
The theory $\PAm$ is the set of axioms of nonnegative parts of discrete ordered rings (see Section 2.1 of Chapter 2 of \cite{kaye}.)
\end{definition}

This theory is used as a basis for several arithmetical theories. In the following, the subsequent theories are formed by adding the instances of the parameter induction scheme for the subsequent classes of the arithmetical hierarchy.

\begin{definition}
Given a natural number $n$, the theory $\IPn$ is $\PAm$ augmented with the instances of the  induction scheme (with parameters) for $\Pi_{n}$ formulae of $\LA$. Likewise for $\ISn$ and $\IDz$. Peano Arithmetic, $\PA$, is $\PAm$ together with all ($\LA$) instances of parameter induction scheme added. If $\mathcal{L}$ is any language (extending the language of arithmetic), then $\IND(\mathcal{L})$ denotes the collection of all induction axioms for formulae of $\mathcal{L}$.
\end{definition}

The weakest system we are interested in lies in between $\IDz$ and I$\Sigma_1$.

\begin{definition}\label{defi_exp}
We use symbol $\exp$ to denote arithmetical representation of an exponential function, i.e. the formula $\exp(x,y)=z$ is to be interpreted as \mbox{$x^{y}=z$.}
\end{definition}

An explicit definition of $\exp$ can be written as a $\Delta_{0}$ formula (for more details see Section 3(c) of Chapter V of \cite{hapu98}).

\begin{definition}
The theory $\EA$ is $\IDz$ augmented with the axiom $\forall x\forall y\exists z.\exp(x,y)=z$.
\end{definition}

In the case of our results, the choice of a language and a base theory in principle plays a secondary role, altough the use of $\EA$ allows us to show that the theorems presented do not require a strong base theory --- the axiom $\forall x\forall y\exists z.\exp(x,y)=z$ is $\ISo$-provable (cf. the remark following definition \ref{defi_exp}). An additional advantage of this theory, is that it is finitely axiomatizable (Section 2(c) of Chapter I of \cite{hapu98}). In the following, when we write $\EA$, we mean its finite axiomatisation.

\begin{definition}
Let $\mathcal{L}_{E}$ denotes the language of the signature of $\LA$ augmented with a binary function symbol $E$. The class of $\Delta_{0}(E)$ formulae is the class of $\Delta_{0}$ formulae of $\mathcal{L}_{E}$.
\end{definition}

\begin{definition}
The theory $\IDz(\exp)$ is $\PAm$ augmented with the axioms $\forall x.E(x,0)=1$, $\forall x\forall y.E(x,y+1)=E(x,y)\times x$, and the instances of the parameter induction scheme for $\Delta_{0}(E)$ formulae.
\end{definition}

\begin{fact}
The theory $\EA+\forall x\forall y.exp(x,y)=E(x,y)$ proves each axiom of $\IDz(\exp)$.
\end{fact}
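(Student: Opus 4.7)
The plan is to verify in $\EA + \forall x\forall y.\,\exp(x,y)=E(x,y)$ each of the three groups of axioms of $\IDz(\exp)$: the two recursive equations for $E$ and the induction scheme for $\Delta_0(E)$-formulae.

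For the recursive equations, $\EA$ already proves $\forall x.\,\exp(x,0)=1$ and $\forall x\forall y.\,\exp(x,y+1)=\exp(x,y)\times x$ for its $\Delta_0$-formula representation of exponentiation (cf.\ Chapter V, \S 3(c) of \cite{hapu98}). Under the hypothesis identifying $\exp$ with $E$ these identities transfer immediately to $E$.

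For $\Delta_0(E)$-induction I would proceed by a two-step reduction. Writing $\Delta_0(\exp)$ for the class of bounded formulae of $\LA$ enriched with $\exp$ as a function symbol, the first step is that under $\forall x\forall y.\,\exp(x,y)=E(x,y)$ every $\Delta_0(E)$-formula $\varphi(\bar x)$ is provably equivalent to the $\Delta_0(\exp)$-formula $\varphi'(\bar x)$ obtained by uniformly renaming $E$ to $\exp$; this is immediate from the hypothesis. The second step is that $\EA$ proves induction for all $\Delta_0(\exp)$-formulae, by the classical elimination-of-bounded-function-symbols argument going back to Gaifman and Dimitracopoulos, documented in Chapter V, \S 3(c) of \cite{hapu98}; I would cite it rather than reproduce it.

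The main obstacle lies in the second step: since $\LA$-terms are polynomial in their arguments and cannot dominate $\exp$-values, one cannot translate $\Delta_0(\exp)$-formulae to pure $\Delta_0$ $\LA$-formulae by naively bounding witnesses with $\LA$-terms. The known argument instead uses $\EA$'s totality of $\exp$ together with the $\Delta_0$-definability of the graph of $\exp$ to bound witnesses on each bounded initial segment, which is sufficient to import $\EA$'s native $\Delta_0$-induction. Once this is granted, the full claim is a routine combination of the two reductions above.
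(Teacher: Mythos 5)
Your proposal is correct and matches the paper's treatment: the paper offers no proof of this fact, merely citing the corollary below Theorem 3.3 of Dimitracopoulos and Gaifman, and your two-step reduction --- transferring the recursion equations to $E$ under the identifying axiom, then reducing $\Delta_{0}(E)$-induction to $\EA$'s native $\Delta_{0}$-induction via the elimination of the bounded function symbol through the $\Delta_0$-definable graph of $\exp$ --- is exactly the content of that cited result.
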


This theorem can be found in the corollary below Theorem 3.3 of \cite{dimgai82}. \cite{pettigrew2008} introduces a set theory which is bi-interpretable with $\EA$ (Theorem 4 and Theorem 7).

\vspace{0.2cm}
\paragraph{The arithmetization of syntax and metalogic}
We explain the basics of arithmetization of syntax in $\EA$ (for more details, consult \cite{hapu98}). Specific issues are introduced when necessary. $\qcr{\varphi}$ denotes the G\"odel code of a formula $\varphi$ and $\num{n}$ denotes the numeral naming $n$, i.e. the expression $0+1+1+\ldots +1$ ($n$ times 1). Usually, we omit quotation marks to increase readability. $x\in\Var$, $x\in\TmcLA$, $x\in\Sent_{\mathcal{L}}$ denote the arithmetical formulae representing the sets of (codes of) variables, closed terms of $\LA$ and (assuming that the coding of $\mathcal{L}$ has been fixed) sentences of $\mathcal{L}$ (respectively). $\val(x)=y$ denotes the arithmetical formula, which expresses "$y$ is the value of the $\LA$-term $x$" and $\dpt(x) = y$ expresses "the depth of a formula $x$ is equal to $y$". The use of definable functions representing syntactical operations is signalised with $[\cdot]$, e.g. "$x=[\alpha\land\beta]$" is a shorthand for "the value of $x$ is equal to the code of a conjunction of sentences $\alpha$ and $\beta$". The addition of a dot above a variable indicates that a numeral is inserted instead of this variable, for example "$y=[\varphi(\dot{x}/v)]$"  is a shorthand for "the value of $y$ is equal to the code of a formula formed by substituting the $x$-th numeral for all occurrences of the variable $v$ into the formula $\varphi(v)$". We avoid mentioning $v$ if it is clear from context. We assume that all operations described above are provably total in $\EA$ and hence allow ourselves to treat them as new function symbols, writing e.g. $\forall x \bigl(x\in\Sent_{\LA}\rightarrow\varphi [\neg x]\bigr)$ to express that the negation of every arithmetical sentence satisfies formula $\varphi(x)$.  If $Th$ is any r.e. theory, then $\Proof_{Th}(y,x)$ denotes the canonical  provability predicate for $Th$ expressing "$y$ is a proof of $x$ from the axioms of $Th$". We put
\begin{align*}
\Prov_{Th}(x)&:= \exists y.\Proof_{Th}(y,x) \\ \Con_{Th}(x)&:= \forall y<x.\neg\Proof_{Th}(y,\qcr{0=1}) \\ \Con_{Th}&:= \forall x.\Con_{Th}(x).
\end{align*} 

\vspace{0.2cm}

We recall the canonical construction of partial truth predicates, $\Tr_n$, such that
\begin{enumerate}
    \item the function $n\mapsto \qcr{\Tr_n(x)}$ is elementary;
    \item for every $n$, $\Tr_n(x)$ satisfies compositional Tarski truth clauses for formulae of logical depth at most $n$.
\end{enumerate}
 The construction of $\Tr_n$ is by elementary induction on $n$: for $n=0$ we put $\Tr_0(x):= \exists s,t\in\TmcLA (x = [s=t]\wedge \val(s) = \val(t)).$ For $n>0$ we put $\Tr_n(x)$ to be the disjunction of the following formulae:
 \begin{itemize}
     \item $\dpt(x)<\num{n} \wedge \Tr_{n-1}(x)$
     \item $\dpt(x) = \num{n}\wedge \exists \varphi (y = [\neg \varphi] \wedge \neg \Tr_{n-1}(\varphi)\bigr)$
     \item $\dpt(x) = \num{n} \wedge \exists \varphi,\psi(x = [\varphi\wedge \psi]\wedge \Tr_{n-1}(\varphi)\wedge \Tr_{n-1}(\psi))$
     \item $\dpt(x) = \num{n}\wedge \exists \varphi\exists v(\Var(v)\wedge x = [\exists v \varphi] \wedge \exists z.\Tr_{n-1}[\varphi(\dot{z}/v)].$
 \end{itemize}

Finally, we recall provable $\Sigma_1$-completeness of $\PAm$:
\begin{fact}
For every $\Sigma_{1}$-sentence $\sigma\in \LA$, $\PA\vdash \sigma\rightarrow\Prov_{\PAm}(\ulcorner\sigma\urcorner)$. In particular, given a theory $Th\subseteq Th(\N)$ and an arithmetical $\Delta_{1}(Th)$-sentence $\tau$,
$\PA+Th\vdash\Prov_{\PAm}(\ulcorner\tau\urcorner)\lor\Prov_{\PAm}(\ulcorner\lnot\tau\urcorner)$.
\end{fact}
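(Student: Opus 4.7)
The plan is to establish the first clause --- provable $\Sigma_1$-completeness of $\PAm$ over $\PA$ --- by external induction on the build-up of $\Sigma_1$-formulae, and then deduce the second clause as a short corollary. To handle free variables uniformly, my aim is to prove the sharper claim that for every $\Sigma_1$-formula $\varphi(\bar x)$,
\[ \PA \vdash \forall \bar x \bigl( \varphi(\bar x) \to \Prov_{\PAm}(\qcr{\varphi(\dot{\bar x})}) \bigr), \]
where $\dot{\bar x}$ stands for the insertion of the numerals naming $\bar x$. The induction naturally splits into three phases: atomic $\Delta_0$-formulae, closure under the Boolean connectives and bounded quantifiers, and closure under unbounded existential quantification.

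For the atomic case I would first show, by formalised induction in $\PA$ on the build-up of a closed $\LA$-term $t$, that $\Prov_{\PAm}(\qcr{t = \num{\val(t)}})$; this reduces to the recursive axioms for $+$ and $\times$ available in $\PAm$. Once term values can be internalised, the atomic identities $s = t$ and inequalities $s < t$ are settled from the order axioms of $\PAm$ together with the $\PAm$-provable facts $\num m \neq \num n$ for distinct $m,n$ and $\num m < \num{m+1}$. Closure under negation and conjunction propagates by routine manipulation of $\PAm$-proofs within $\PA$; the bounded-quantifier clause $\forall y < t.\psi$ reduces to internalising a finite conjunction, which is handled by $\Delta_0$-induction on $\val(t)$ inside $\PA$. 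Finally, for $\varphi = \exists y\, \psi(y)$ with $\psi\in\Delta_0$, I would pick a witness and apply existential introduction inside $\Prov_{\PAm}$.

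The delicate step is the first phase together with the bounded-quantifier case, where one must be scrupulous about the interplay of external and internal numerals and verify that the quantifier alternations stay within $\PA$. With provable $\Sigma_1$-completeness in hand, the ``in particular'' clause follows: given $\tau \in \Delta_1(Th)$ witnessed by $\sigma \in \Sigma_1$ and $\pi \in \Pi_1$ with $Th \vdash \tau \leftrightarrow \sigma$ and $Th \vdash \tau \leftrightarrow \pi$, I would work in $\PA + Th$, derive the equivalence $\sigma \leftrightarrow \pi$ and hence the excluded-middle disjunction $\sigma \vee \neg \pi$ with both disjuncts in $\Sigma_1$. The first clause converts each disjunct into a $\PAm$-provability statement about $\sigma$ respectively $\neg\pi$, and the $Th$-witnessed equivalences deliver the final disjunction $\Prov_{\PAm}(\qcr{\tau}) \lor \Prov_{\PAm}(\qcr{\neg \tau})$.
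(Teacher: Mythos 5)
Your treatment of the first clause is the standard textbook proof of provable $\Sigma_1$-completeness and is essentially workable, but the induction hypothesis you announce is too weak to carry the negation step. From $\PA\vdash\forall\bar x\bigl(\psi(\bar x)\to\Prov_{\PAm}(\qcr{\psi(\dot{\bar x})})\bigr)$ alone you get nothing about $\neg\psi$: when $\neg\psi(\bar x)$ holds the hypothesis is vacuous. The usual repair is to prove, for $\Delta_0$ formulae, the two-sided claim $\bigl(\psi\to\Prov_{\PAm}(\qcr{\psi(\dot{\bar x})})\bigr)\land\bigl(\neg\psi\to\Prov_{\PAm}(\qcr{\neg\psi(\dot{\bar x})})\bigr)$ simultaneously, passing to the one-sided form only at the outermost existential quantifiers. (Also, the internal induction for the bounded universal quantifier is on a $\Sigma_1$ condition, so it is $\Sigma_1$- rather than $\Delta_0$-induction; harmless in $\PA$.) The paper states this Fact without proof, so there is no argument of the authors to compare yours with.

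The final step of your derivation of the ``in particular'' clause is a genuine gap. You correctly reach $\Prov_{\PAm}(\qcr{\sigma})\lor\Prov_{\PAm}(\qcr{\neg\pi})$ inside $\PA+Th$, but the passage to $\Prov_{\PAm}(\qcr{\tau})\lor\Prov_{\PAm}(\qcr{\lnot\tau})$ invokes the equivalences $\tau\leftrightarrow\sigma$ and $\tau\leftrightarrow\pi$, which are only $Th$-provable; $\PAm$-provability is not closed under $Th$-provable equivalence, so ``the $Th$-witnessed equivalences deliver the final disjunction'' is precisely the inference that fails. Indeed, read literally the conclusion cannot be reached by any argument: for $Th=\Th(\N)$ every arithmetical sentence is $\Delta_1(Th)$, and the claimed disjunction would make $\PAm$ decide every sentence. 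What your argument actually establishes is $\PA+Th\vdash\Prov_{\PAm}(\qcr{\sigma})\lor\Prov_{\PAm}(\qcr{\neg\pi})$ --- equivalently, the stated disjunction with $\PAm$ replaced by $\PAm$ together with the finitely many axioms of $Th$ used in deriving the two equivalences. You should prove that (presumably intended) reading explicitly and flag the discrepancy, rather than absorb it into your last sentence.
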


\begin{definition}[Flexible formula]
    Let $Th$ be any theory and let $\Gamma$ be a class of formulae. A formula $\theta(x)$ is $\Gamma$-\emph{flexible} over $Th$ iff for every formula $\varphi(x)\in \Gamma$, the theory $Th + \forall x \bigl(\theta(x)\leftrightarrow \varphi(x)\bigr)$ is consistent. 
\end{definition}

\begin{theorem}[Essentially due to Kripke, see \cite{blanck2017}, Theorem 4.2]\label{tw_flex}
For every r.e. theory in the arithmetical language $Th$ extending $\EA$ and every $n$, there exists a $\Sigma_n$-formula, which is $\Sigma_n$-flexible over $Th$.
\end{theorem}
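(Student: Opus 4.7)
The plan is to run the classical Kripke construction, combining the diagonal lemma with a least-counterexample trick. Fix an elementary enumeration $(\varphi_k(x))_{k\in\omega}$ of all $\Sigma_n$-formulas with one free variable, and let $\mathrm{Sat}_{\Sigma_n}(e,x)$ denote the standard $\Sigma_n$ partial truth predicate, which is itself $\Sigma_n$ and provably in $\EA$ satisfies $\mathrm{Sat}_{\Sigma_n}(\qcr{\varphi_k},x)\leftrightarrow\varphi_k(x)$ for every concrete $k$. By Craig's trick we may assume that the chosen axiomatization of $Th$ is elementary, so that $\Proof_{Th}$ is $\Delta_0(\exp)$ and in particular $\Delta_1$ in $\EA$. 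We may also assume $Th$ is consistent, since otherwise the claim is vacuous.

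First I would use the G\"odel fixed-point lemma to produce a $\Sigma_n$-formula $\theta(x)$ such that, provably in $\EA$,
\[
\theta(x)\leftrightarrow \exists p\,\exists k\leq p\,\Bigl[\Proof_{Th}\bigl(p,[\neg\forall z(\theta(z)\leftrightarrow\varphi_k(z))]\bigr)\wedge \mu(p,k)\wedge \mathrm{Sat}_{\Sigma_n}(\qcr{\varphi_k},x)\Bigr],
\]
where $\mu(p,k)$ is the bounded formula expressing that the pair $\langle p,k\rangle$, under a fixed elementary pairing, is lex-least among pairs satisfying the proof clause. The right-hand side is $\Sigma_n$: the proof predicate is $\Delta_1$, minimality is bounded, $\mathrm{Sat}_{\Sigma_n}$ is $\Sigma_n$, and the outer existential prefix preserves $\Sigma_n$. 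Formally one first constructs an auxiliary $\Sigma_n$-formula $\Psi(v,x)$ with a free variable $v$ standing for $\qcr{\theta}$ (so that the sentence whose code appears under $\Proof_{Th}$ is obtained as a $\Delta_0$-function of $v$ and $k$), and only then invokes the diagonal lemma to obtain $\theta$ with $\qcr{\theta}$ in place of $v$.

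Next I would verify flexibility. Fix $\varphi=\varphi_m\in\Sigma_n$ and suppose, for contradiction, that $Th+\forall x(\theta(x)\leftrightarrow\varphi_m(x))$ is inconsistent; then $Th\vdash\neg\forall x(\theta(x)\leftrightarrow\varphi_m(x))$. In the metatheory, let $\langle p_0,k_0\rangle$ (with $k_0\leq p_0$) be the lex-least pair such that $p_0$ codes a $Th$-proof of $\neg\forall x(\theta(x)\leftrightarrow\varphi_{k_0}(x))$; such a pair exists. Its existence together with minimality is a true $\Sigma_1$-fact, so provable $\Sigma_1$-completeness gives
\[
\EA\vdash \Proof_{Th}\bigl(p_0,[\neg\forall x(\theta(x)\leftrightarrow\varphi_{k_0}(x))]\bigr)\wedge \mu(p_0,k_0).
\]
Substituting into the fixed-point equivalence yields $\EA\vdash\forall x\bigl(\theta(x)\leftrightarrow \mathrm{Sat}_{\Sigma_n}(\qcr{\varphi_{k_0}},x)\bigr)$, hence $Th\vdash\forall x(\theta(x)\leftrightarrow\varphi_{k_0}(x))$. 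But $p_0$ simultaneously witnesses $Th\vdash\neg\forall x(\theta(x)\leftrightarrow\varphi_{k_0}(x))$, so $Th$ is inconsistent, contradicting our assumption.

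The main obstacle is arranging the self-reference and the complexity bookkeeping correctly: the sentence appearing inside $\Proof_{Th}$ must refer to $\theta$ itself, which is precisely what the auxiliary $\Psi(v,x)$ construction accomplishes, and one must verify that for the externally standard numeral $k_0$ the $\EA$-provable equivalence $\mathrm{Sat}_{\Sigma_n}(\qcr{\varphi_{k_0}},x)\leftrightarrow\varphi_{k_0}(x)$ can be used to pass from the fixed-point to the plain biconditional. Once these points are handled, what remains is a routine least-witness argument powered by $\Sigma_1$-completeness.
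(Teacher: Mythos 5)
The paper does not actually prove this theorem --- it is imported from the literature (Blanck, Theorem 4.2) --- so there is no in-paper argument to compare against; your construction is the standard Kripke flexible-formula proof (parametric diagonal lemma applied to a least-proof selector composed with the $\Sigma_n$ satisfaction predicate, followed by provable $\Sigma_1$-completeness for the externally least witness), which is exactly the argument in the cited source, and it is correct. The only caveats are cosmetic: the construction, like any, requires $n\geq 1$ (for $n=0$ the statement is false, since $\EA$ decides $\theta(0)$), and the fixed point must finally be replaced by an $\EA$-provably equivalent formula in literal $\Sigma_n$ form --- both points you already flag as bookkeeping.
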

    
The generalisation of this theorem to sequential theories, was proved in \cite{lelyk_wcislo_universal}.


\subsection{Definability}


In this paper we consider only a very special, strong form of the interpretability relation between theories. We focus on theories that extend the language of arithmetic with finitely many relational symbols. In the following, we present the most important relevant definitions (for a detailed presentation see \cite{viss_sivs}):

\begin{definition}
Given three signatures $A$, $B$ and $C\subseteq A\cap B$, a $C$-\emph{conservative translation} from $A$ to $B$ is a mapping $\sigma$ constant on $C$ such that, for every non-arithmetic $n$-ary predicate symbol $P$ of $A$, $\sigma(P)$ is a formula of $B$ with exactly $n$ free variables. Each such translation can naturally be extended to the set of all formulae over the signature $A$. The result of applying the translation $\sigma$ to a sentence $\varphi$ is denoted by $\sigma[\varphi]$.\footnote{As usual, we assume that the translation is preceded with a process of variable renaming (if necessary).}
\end{definition}

\begin{definition}
We say that a theory $U$ is \emph{definable} in a theory $V$  modulo signature $C$ if there exists a translation $\sigma$ from the language of $U$ to the language of $V$ which is $C$-conservative and such that for every axiom $\alpha$ of $U$, $V\vdash \sigma[\alpha]$. 
\end{definition}

\begin{example}
    Consider the theory $U$ extending $\EA$ with an axiom $\forall x.P(x)$, and a theory $V$ extending $\EA$ with an axiom $\forall x.\neg P(x)$, where $P$ is a fresh predicate. Then the translation $\sigma(P) = \neg P(x)$ witnesses that $U$ is definable in $V$ (and actually, $V$ is definable in $U$ via the same translation).
\end{example}

In the rest of the paper we focus exclusively on the translations which are conservative over the arithmetic signature. This is meaningful, since all theories we consider are extensions of $\IDz+\exp$. We note that in the context of the general interpretability theory, our definition of definability corresponds to a direct and $\LA$-conservative interpretability (see \cite{viss_sivs}).

\begin{fact}
The definability relation between theories extending $\EA$ is a pre-order, i.e. it is transitive and reflexive.
\end{fact}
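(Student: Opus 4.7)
The plan is to verify reflexivity and transitivity directly from the definition. For reflexivity, take $\sigma$ to be the identity translation: on each non-arithmetic $n$-ary predicate symbol $P$ of the language of $U$, set $\sigma(P)(\overline{x}) := P(\overline{x})$, and let $\sigma$ act as the identity on arithmetic symbols. Then $\sigma$ is trivially $\LA$-conservative and $\sigma[\alpha] = \alpha$ for every axiom $\alpha$ of $U$, so $U \vdash \sigma[\alpha]$.

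For transitivity, suppose $U$ is definable in $V$ via $\sigma$ and $V$ is definable in $W$ via $\tau$. I would show that the composition $\tau \circ \sigma$ witnesses the definability of $U$ in $W$. That $\tau \circ \sigma$ is $\LA$-conservative is immediate, since both $\sigma$ and $\tau$ fix the arithmetic signature. The substantive step is to verify $W \vdash (\tau \circ \sigma)[\alpha]$ for every axiom $\alpha$ of $U$. This reduces to the standard translation-of-proofs lemma: if $\tau$ is an $\LA$-conservative translation from the language of $V$ to the language of $W$ and $W$ proves $\tau[\beta]$ for every axiom $\beta$ of $V$, then $V \vdash \psi$ implies $W \vdash \tau[\psi]$. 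The hypothesis holds for non-arithmetic axioms of $V$ by assumption, and for the $\EA$ axioms it is automatic since $\tau$ is identity on $\LA$ and $W$ extends $\EA$. Applied to $\psi = \sigma[\alpha]$, which is provable in $V$ by hypothesis, this yields $W \vdash \tau[\sigma[\alpha]] = (\tau \circ \sigma)[\alpha]$, as required.

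The translation-of-proofs lemma itself is proved by routine induction on the length of a proof in $V$, using that $\tau$ commutes with the propositional connectives and --- after the variable renaming mentioned right after the definition of translation --- with the quantifiers; the logical axioms are preserved because $\tau$ respects the term structure on $\LA$. I do not expect a genuine obstacle here: the only bookkeeping concerns the recursive definition of $(\tau \circ \sigma)(P)$, which simultaneously substitutes $\tau(Q)$ for each non-$\LA$ predicate $Q$ occurring in $\sigma(P)$, and a short check that this operation agrees with applying the extension of $\tau$ to the formula $\sigma[\varphi]$ for any $\varphi$ in the language of $U$.
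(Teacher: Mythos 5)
Your proof is correct and is exactly the standard argument (identity translation for reflexivity, composition of translations plus the translation-of-proofs lemma for transitivity); the paper states this as a Fact without giving any proof, so there is nothing to contrast it with. The one point worth noting is that the argument is as routine as you claim precisely because the paper's translations are unrelativized, so no domain formula needs to be composed or verified nonempty.
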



\subsection{Axiomatic Theories of Truth}


We begin by providing a fundamental definition. 

\begin{definition}
A consistent theory $Th$ containing $\EA$ is a \emph{theory of truth} for a language $\mathcal{L}$ iff there exists a formula $\Theta$ with one free variable such that for each sentence $\sigma\in\mathcal{L}$
$$Th\vdash\Theta(\ulcorner\sigma\urcorner)\leftrightarrow\sigma\text{.}$$
\end{definition}

\begin{theorem}[Tarski]
Suppose that $Th$ is a consistent theory extending $\EA$ in a language $\mathcal{L}$. Then $Th$ is not a theory of truth for $\mathcal{L}$.
\end{theorem}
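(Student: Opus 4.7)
The proof I would give is the classical Tarski argument via diagonalization, adapted to the setting of this paper. The plan is to assume toward contradiction that some consistent $Th\supseteq \EA$ in language $\mathcal{L}$ is a theory of truth for $\mathcal{L}$, with witnessing formula $\Theta(x)$, and then to construct a liar sentence that forces $Th$ to prove $\lambda\leftrightarrow\neg\lambda$.

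Concretely, first I would invoke the diagonal lemma, which holds in $\EA$ because $\EA$ supports the arithmetization of syntax recalled in the preliminaries (in particular the substitution function $[\varphi(\dot x /v)]$ is provably total in $\EA$, which is all one needs for Gödel's fixed-point construction). Applying the diagonal lemma to the formula $\neg\Theta(x)$, one obtains an $\mathcal{L}$-sentence $\lambda$ such that
\[
\EA \vdash \lambda \leftrightarrow \neg\Theta(\qcr{\lambda}).
\]
Since $Th$ extends $\EA$, the same equivalence is provable in $Th$.

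Next, I would use the assumption that $\Theta$ satisfies Tarski biconditionals over $Th$: instantiating the defining property of a theory of truth at the sentence $\lambda\in\mathcal{L}$ itself, we get
\[
Th \vdash \Theta(\qcr{\lambda}) \leftrightarrow \lambda.
\]
Combining this with the diagonal equivalence displayed above yields
\[
Th \vdash \lambda \leftrightarrow \neg\lambda,
\]
which is a contradiction in first-order logic, violating the assumed consistency of $Th$. Hence no such $\Theta$ can exist, and $Th$ is not a theory of truth for its own language $\mathcal{L}$.

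The only real subtlety, and the single point where one must be careful, is that the diagonal lemma must be applicable to $\lambda$ as a genuine sentence of the language $\mathcal{L}$ in which $\Theta$ lives. This is unproblematic here: since $\mathcal{L}\supseteq \LA$ and $\Theta\in\mathcal{L}$, the formula $\neg\Theta(x)$ is an $\mathcal{L}$-formula with one free arithmetical variable, so the $\EA$-internal fixed-point construction delivers $\lambda$ within $\mathcal{L}$, and the instance of the Tarski biconditional scheme applied to $\lambda$ is legitimate. No further hypothesis (e.g.\ $\omega$-consistency or soundness) is needed, because the argument already derives an outright propositional contradiction rather than merely exhibiting an unprovable true sentence.
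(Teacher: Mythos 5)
Your proof is correct: it is the classical diagonal argument, obtaining a liar sentence $\lambda$ in $\mathcal{L}$ via the fixed-point lemma applied to $\neg\Theta(x)$ and deriving $Th\vdash\lambda\leftrightarrow\neg\lambda$, and you rightly note that the only point needing care is that $\lambda$ is itself an $\mathcal{L}$-sentence so the biconditional scheme applies to it. The paper omits the proof entirely (citing the result as Tarski's), so there is no competing argument to compare against; yours is the standard and expected one.
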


\begin{definition}[Some prominent theories of truth for $\LA$] The first three theories are formulated in the language $\LT:=\LA\cup \{T\}$, where $T$ is a fresh unary predicate. The last theory is formulated in the language $\LA\cup \{D\}$, where $D$ is a fresh binary predicate.

$\TBm$ extends $\EA$ with infinitely many axioms of the form
\[T(\qcr{\sigma})\leftrightarrow \sigma,\]
where $\sigma$ is an $\LA$-sentence.

$\UTBm$ extends $\EA$ with infinitely many axioms of the form
\[\forall x \bigl(T[\sigma(\dot{x})]\leftrightarrow \sigma(x)\bigr),\]
where $\sigma(x)$ is an $\LA$-formula with at most $x$ free.

$\CTm$ extends $\EA$ with the following axioms
\begin{itemize}
    \item $\forall s,t \big(s,t\in\TmcLA\rightarrow\big(T[s=t]\leftrightarrow\val(s)=\val(t)\big)\big)$,
    \item $\forall\varphi\big(\varphi\in\Sent_{\LA}\rightarrow\big(T[\lnot\varphi]\leftrightarrow\lnot T(\varphi)\big)\big)$,
    \item $\forall\varphi,\psi\big(\varphi,\psi\in\Sent_{\LA}\rightarrow(T[\varphi\land\psi]\leftrightarrow T(\varphi)\land T(\psi)\big)\big)$,
    \item $\forall\varphi\forall y\big(y\in\Var\land[\exists y.\varphi]\in\Sent_{\LA}\rightarrow\big(T[\exists y.\varphi]\leftrightarrow\exists z.T[\varphi(\dot{z}/y)]\big)\big)$.
\end{itemize}

$\DEFm$ extends $\EA$ with all axioms of the form
\[\forall z \bigl(D(\qcr{\sigma},a)\leftrightarrow \exists ! x. \sigma\wedge \sigma(a)\bigr),\]
where $\sigma(x)$ is an $\LA$-formula with at most $x$ free and $\exists ! x$ is understood as "There is exactly one $x$". The intuitive reading of $D(\sigma,x)$ is "$x$ is definable by the formula $\sigma$."
\end{definition}

\begin{proposition}
$\TBm$, $\UTBm$, $\CTm$ and $\DEFm$ are axiomatic theories of truth.
\end{proposition}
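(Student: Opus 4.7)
The plan is to handle the four theories in turn, exhibiting an explicit formula $\Theta(x)$ for each. For $\TBm$ the obvious choice $\Theta(x) := T(x)$ works immediately, since the axioms of $\TBm$ are literally the desired biconditionals. For $\UTBm$ the same choice $\Theta(x) := T(x)$ works: any $\LA$-sentence $\sigma$ is in particular a formula with at most $x$ free (the variable $x$ simply does not occur in it), so the corresponding instance of the $\UTBm$-schema becomes $\forall x\bigl(T[\sigma(\dot{x})]\leftrightarrow \sigma\bigr)$, which, because $\sigma(\dot{x}/x)$ is just $\sigma$, is literally $T(\qcr{\sigma})\leftrightarrow \sigma$.

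For $\CTm$ I would still take $\Theta(x) := T(x)$, but now the biconditionals are derived by external induction on the complexity of $\LA$-formulas. The right inductive hypothesis is the uniform one: for every $\LA$-formula $\varphi(v_1,\dots,v_n)$,
\[
\CTm\vdash \forall z_1,\dots,z_n\bigl(T[\varphi(\dot{z}_1/v_1,\dots,\dot{z}_n/v_n)]\leftrightarrow \varphi(z_1,\dots,z_n)\bigr).
\]
The base case (atomic $s=t$) uses the first compositional axiom together with the $\EA$-provable identity $\val(s(\dot{\vec z})) = s(\vec z)$ for each specific term $s$. The inductive steps for $\neg$, $\wedge$, and $\exists$ invoke directly the remaining compositional axioms of $\CTm$ and the inductive hypothesis; in the existential case it is essential that the hypothesis is already the uniform one so that $\exists w.T[\psi(\dot{w}/v,\dot{\vec z}/\vec u)]$ can be rewritten as $\exists w.\psi(w,\vec z)$. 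Specialising to the case $n=0$ delivers the required T-biconditionals.

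For $\DEFm$ the choice of $\Theta$ is less direct and is the real content of the proposition. Given an $\LA$-sentence $\tau$, I would consider the $\LA$-formula
\[
\sigma_\tau(x) := (x=0\wedge \tau)\vee (x=1\wedge \neg\tau).
\]
In $\EA$ one checks that $\sigma_\tau(x)$ has exactly one satisfier (namely $0$ if $\tau$ holds, $1$ otherwise), so $\EA\vdash \exists!x.\sigma_\tau(x)$ and $\EA\vdash \sigma_\tau(0)\leftrightarrow \tau$. Applying the $\sigma_\tau$-instance of the $\DEFm$-schema at $a=0$ therefore yields $\DEFm\vdash D(\qcr{\sigma_\tau},\num{0})\leftrightarrow \tau$. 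To define $\Theta$ uniformly, I would introduce the elementary syntactic function $f$ that, given a code $y$ of a sentence, outputs the code of $\sigma_y(x):= (x=0\wedge y)\vee (x=1\wedge \neg y)$, and put $\Theta(y):= D(f(y),\num{0})$. Since $\EA$ proves $f(\qcr{\tau}) = \qcr{\sigma_\tau}$ for each specific $\tau$, the required biconditional follows. I expect the only real obstacle to be spotting this trick for $\DEFm$; the $\CTm$ case, while not immediate, is routine once the inductive hypothesis is stated uniformly in the free variables, and $\TBm$ and $\UTBm$ are essentially bookkeeping.
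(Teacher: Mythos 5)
Your proof is correct and follows essentially the same route as the paper: the first three cases are handled as the paper does (it simply calls them immediate), and for $\DEFm$ your formula $\Theta(y):=D(f(y),\num{0})$ with $\sigma_\tau(x):=(x=0\wedge\tau)\vee(x=1\wedge\neg\tau)$ is a minor variant of the paper's choice $\Theta(\sigma):=D([\sigma\wedge x=0],0)$, resting on exactly the same trick of reducing the truth of $\tau$ to the definability of $0$ by a formula manufactured from $\tau$.
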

\begin{proof}
This is immediate for $\TBm, \UTBm$ and $\CTm$. For $\DEFm$ it is enough to consider the formula
\[\Theta(\sigma):= D([\sigma\wedge x=0], 0).\]
The intuition is that for every $\LA$-sentence $\sigma$, provably in $\DEFm$, $\Theta(\qcr{\sigma})$ holds iff the sentence $\sigma\wedge x=0$ is a definition of $0$, which is equivalent to $\sigma$ being true.
\end{proof}

\begin{definition}
$\TB$, $\UTB$, $\CT$, $\DEF$ denote the extensions of theories $\TBm$, $\UTBm$, $\CTm$ and $\DEFm$ with full induction scheme for their languages.
\end{definition}

\begin{definition}\label{def_ct0}
$\CTz$ extends $\CTm$ with induction axioms for $\Delta_0$-formulae of $\LT$.
\end{definition}

The following fact is a crucial model-theoretical property of $\CTz$. For a proof see \cite[Fact 31]{lelyk_wcislo_local}.

\begin{fact}\label{fact_modeltheory_ct_0}
Let $(\mathcal{M},T)\models \CTm$. For $c\in M$, put $T_c:= \{\varphi\in M:\ (\mathcal{M},T)\models \dpt(\varphi)\leqslant c \wedge T(\varphi)\}$. Then the following are equivalent:
\begin{itemize}
    \item $(\mathcal{M}, T)\models \CTz$.
    \item For every $c$, $(\mathcal{M}, T_c)\models \CTm + \IND(\LT).$
\end{itemize}
In particular, $\CTz\vdash \PA$.
\end{fact}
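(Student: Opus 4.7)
The plan is to establish the two implications separately, with the forward direction carrying the main technical content, and then derive $\CTz \vdash \PA$ as a direct corollary.

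For the forward direction, suppose $(\mathcal{M}, T) \models \CTz$ and fix $c \in M$. The key observation is that $T_c$ is defined over $(\mathcal{M}, T)$ by the $\Delta_0(\LT)$-formula $\tau(y) := \dpt(y) \leq c \wedge T(y)$, since $\dpt$ is a total $\EA$-definable function. The compositional axioms of $\CTm$ for $T_c$ then follow by a routine case analysis on the depth of the formula at hand: for sentences whose depth and the depths of their immediate subformulas all lie below the relevant boundary determined by $c$, $T_c$ inherits the compositional behavior from $T$ via the axioms of $\CTm$, and the remaining boundary cases are handled by direct inspection. The crucial step is obtaining full $\IND(\LT)$ in $(\mathcal{M}, T_c)$: given any $\LT$-formula $\psi(x, \bar{y})$, the substitution $T(z) \mapsto \tau(z)$ translates $\psi$ into an $\LT$-formula $\psi^\star(x, \bar{y}, c)$ whose interpretation over $(\mathcal{M}, T)$ agrees with that of $\psi$ over $(\mathcal{M}, T_c)$. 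Using that $T_c$ provably (in $\CTz$, by $\Delta_0(\LT)$-induction on depth) acts as a partial truth predicate for $\LA$-formulas of depth at most $c$, one reduces the induction scheme for $\psi^\star$ to the $\Delta_0(\LT)$-induction available in $\CTz$.

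For the backward direction, assume $(\mathcal{M}, T_c) \models \CTm + \IND(\LT)$ for every $c \in M$. The compositional axioms for $T$ follow by a local-to-global argument: given any sentence $\varphi \in \mathcal{M}$, pick $c := \dpt(\varphi) + 1$ so that $T$ and $T_c$ coincide on $\varphi$ and its immediate subformulas, and invoke the corresponding axiom in $(\mathcal{M}, T_c)$. For $\Delta_0(\LT)$-induction: given a $\Delta_0(\LT)$-formula $\psi(x)$ (with parameters from $\mathcal{M}$), an overspill argument inside $\mathcal{M}$ yields a nonstandard $c$ large enough that every $T$-occurrence in $\psi$, evaluated on the relevant inputs, touches only $\LA$-formulas of depth at most $c$; then $T$ and $T_c$ are interchangeable inside $\psi$, and $\IND(\LT)$ in $(\mathcal{M}, T_c)$ delivers the required induction instance.

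The consequence $\CTz \vdash \PA$ drops out of the forward direction: for any $\LA$-formula $\varphi$, take $c \geq \dpt(\varphi)$ and apply the $\Delta_0(\LT)$-induction available in $\CTz$ to the $\Delta_0(\LT)$-formula $T_c[\varphi(\dot{x})]$; the $\CTm$-provable compositional biconditional $T_c[\varphi(\dot{x})] \leftrightarrow \varphi(x)$ then transfers this into full arithmetical induction for $\varphi$.

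The principal obstacle is extracting full $\IND(\LT)$ on $(\mathcal{M}, T_c)$ from merely $\Delta_0(\LT)$-induction in $\CTz$. The resolution leverages the fact that $T_c$ is a genuine partial truth predicate whose behavior is entirely controlled by $T$ restricted to $\LA$-formulas of bounded depth: the compositional axioms, provable for $T_c$ within $\CTz$, permit an arbitrary $\LT$-formula over $T_c$ to be reduced to a $\Delta_0(\LT)$-assertion about $T$ parameterized by $c$, which is exactly what the bounded induction in $\CTz$ can handle.
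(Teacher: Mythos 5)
The paper itself does not prove this fact --- it cites Fact 31 of \L{}e\l{}yk--Wcis\l{}o --- so your attempt must stand on its own, and its central step does not. The claim that full $\IND(\LT)$ for $(\mathcal{M},T_c)$ ``reduces to'' the $\Delta_0(\LT)$-induction available in $\CTz$ is exactly the hard content of the theorem, and the reduction you describe cannot work as stated: the formula $\psi^{\star}$ obtained by substituting $\tau(z):=\dpt(z)\leqslant c\wedge T(z)$ for $T(z)$ is an arbitrary $\LT$-formula, with all of its unbounded quantifiers and $T$-occurrences intact, so it is not $\Delta_0(\LT)$, and the observation that $T_c$ is a partial truth predicate for $\LA$-formulas of depth $\leqslant c$ gives no mechanism for eliminating the unbounded quantifiers of $\psi^{\star}$ itself (its arithmetical subformulas are standard and gain nothing from being fed to $T_c$). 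That something substantive is needed is clear from the fact that $\CTz$ does \emph{not} prove $\IND(\LT)$ for $T$ itself --- its arithmetical strength is only $\textnormal{RFN}^{<\omega}(\PA)$ --- so the depth bound $c$ must be exploited essentially. The known argument does this by coding the entire formula $\psi[T_c/T]$ with its parameters as a single $\mathcal{M}$-nonstandard arithmetical formula $\Psi$ of depth $O(c)$ (replacing $T_c(t)$ by the internal partial truth predicate $\Tr_c(t)$), so that $\psi[T_c](a)$ becomes the genuinely $\Delta_0(\LT)$, indeed atomic-in-$T$, assertion $T[\Psi(\dot{a})]$. But the equivalence $\psi[T_c](a)\leftrightarrow T[\Psi(\dot{a})]$ requires showing that $T$ agrees with its own internal partial truth predicates on sentences of depth $\leqslant c$; as a property of the depth this is $\Pi_1(\LT)$, not $\Delta_0(\LT)$, it is known to fail in models of $\CTm$ alone, and proving it in $\CTz$ needs the nontrivial closure of $T$ under nonstandard logical derivations. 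None of this is recoverable from your phrase ``by $\Delta_0(\LT)$-induction on depth''.

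Two smaller points. First, the ``boundary cases'' of the compositional axioms are not ``handled by direct inspection'': for a false $\LA$-sentence $\varphi$ with $\dpt(\varphi)=c$, the clause $T_c[\neg\varphi]\leftrightarrow\neg T_c(\varphi)$ literally fails (the left side is false because $\dpt(\neg\varphi)>c$, the right side is true). The fact has to be read --- and is used in Lemma \ref{order2lemma} --- as asserting compositionality only for sentences of depth $\leqslant c$, i.e.\ $\CTm(c)$ rather than $\CTm$; your case analysis should say this rather than claim the unrestricted axioms hold. Second, your backward direction and the derivation of $\CTz\vdash\PA$ are essentially sound, though the latter needs no detour through $T_c$: for a standard $\LA$-formula $\varphi$, the formula $T[\varphi(\dot{x})]$ is already $\Delta_0(\LT)$ and $\CTm$ proves the uniform biconditional $T[\varphi(\dot{x})]\leftrightarrow\varphi(x)$ by external induction on $\varphi$, which gives the induction axiom for $\varphi$ directly.
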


\begin{definition}
    Let $A\subseteq B$ be two theories (possibly in different languages). We say that $B$ is \emph{conservative} over $A$ if every sentence $\varphi$ in the language of $A$, which is provable in $B$, is provable in $A$. We say that $B$ is semantically (model-theoretically) conservative over $A$ if every model of $A$ can be expanded to a model of $B$.\footnote{We say that a model $\mathcal{M}'$ is an expansion of a model $\mathcal{M}$, if $\mathcal{M}$ can be obtained from $\mathcal{M}'$ by simply erasing the interpretations of some symbols.}
\end{definition}

\begin{definition}[Definition of truth, the definability order]
    A theory of truth $Th$ for $\mathcal{L}$ is called a \emph{definition of truth} iff $Th$ is finitely axiomatizable. The set of definitions of truth is denoted by $\DEF$. The set of definitions of truth which are conservative extensions of $\EA$ is denoted by $\cDEF$. The \emph{definability order} is the structure $\langle \cDEF, \LHD\rangle.$\footnote{We tacitly assume that this is in fact a partial order, by passing to the quotient set of $\cDEF$ modulo the equivalence relation of mutual definability.}
\end{definition}

\begin{remark}\label{rem_cons_init_def}
    $\langle \cDEF, \LHD\rangle$ forms an initial segment of $\langle \DEF,\LHD\rangle,$ that is if $\beta\LHD\alpha$ and $\alpha\in \cDEF$, then $\beta\in \cDEF$ as well, and for every $\alpha\in\DEF$, there is a $\beta\in \cDEF$ such that $\beta\LHD\alpha$.
\end{remark}

\begin{fact}\label{fakt_definicje_prawdy}
Among the aforementioned theories of truth, only $\CTm$ and $\CTz$ are definitions of truth. For $\CTm$ this is clear from the presentation. As shown e.g. in \cite{lelyk_global} over $\CTm$, $\CTz$ is equivalent to a single sentence $\forall \varphi \bigl(\Prov_{\PA}(\varphi)\rightarrow T(\varphi)\bigr).$ $\CT^-$ is a conservative extension of $\EA$, but $\CTz$ is not, as its arithmetical consequences properly exceeds those of $\PA$.
\end{fact}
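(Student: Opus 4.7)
The fact combines finite-axiomatizability claims, a conservativity claim, and a non-conservativity claim. For the finite axiomatizability of $\CTm$, I would argue by direct inspection: the axiomatization consists of the finite axiomatization of $\EA$ together with the four explicitly displayed universal closures. For $\CTz$, I would invoke the equivalence cited to \cite{lelyk_global} of $\CTz$, over $\CTm$, with the single sentence $\forall\varphi(\Prov_{\PA}(\varphi)\to T(\varphi))$, which exhibits $\CTz$ as a finite extension of $\CTm$ and hence as finitely axiomatizable.

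For the non-finite axiomatizability of the remaining theories I would split the argument in two. The full-induction theories $\TB, \UTB, \CT, \DEF$ prove the full $\IND(\LT)$ scheme, and a Ryll-Nardzewski style argument yields the result: any putative finite axiomatization has bounded logical depth, hence can derive induction only for $\LT$-formulas whose $T$-occurrences respect that depth bound; Theorem \ref{tw_flex} supplies flexible $\LT$-formulas of arbitrarily high complexity whose induction instances must then fail to be derivable. For the parameter-free schematic theories $\TBm, \UTBm, \DEFm$, I would combine their conservativity over $\EA$ with flexibility: assuming $\EA+\theta(T)\vdash\TBm$, one builds a model $(\mathcal{M},T^\ast)\models\theta$ together with a carefully chosen $\LA$-sentence $\mu$ manufactured via Theorem \ref{tw_flex} whose arithmetical truth in $\mathcal{M}$ and membership in $T^\ast$ can be independently manipulated, contradicting the T-schema at $\mu$.

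Conservativity of $\CTm$ over $\EA$ is a version of the Kotlarski--Krajewski--Lachlan theorem (adapted to the $\EA$-base, in the spirit of the local reasoning underlying Fact \ref{fact_modeltheory_ct_0}): every model of $\EA$ is, upon passing to a suitable elementary extension, expandable to a model of $\CTm$, and a compactness argument then delivers syntactic conservativity. For the non-conservativity of $\CTz$ over $\EA$, Fact \ref{fact_modeltheory_ct_0} already gives $\CTz\vdash\PA$; moreover, combining the reflection sentence $\forall\varphi(\Prov_{\PA}(\varphi)\to T(\varphi))$ with the T-schema provable in $\CTm$ yields, for every $\LA$-formula $\psi(x)$, the uniform reflection axiom $\forall x(\Prov_{\PA}(\qcr{\psi(\dot{x})})\to\psi(x))$. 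This scheme proves $\Con_{\PA}$ and thus witnesses that the arithmetical consequences of $\CTz$ properly exceed those of $\PA$, and a fortiori those of $\EA$.

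The main obstacles I anticipate are the Kotlarski--Krajewski--Lachlan step for $\CTm$-conservativity, which is classical but technical and requires some care in lowering the base theory from $\PA$ to $\EA$, and the diagonal construction for the non-finite axiomatizability of $\TBm, \UTBm, \DEFm$, which demands a delicate interplay between the complexity of $\theta(T)$ and a flexibly chosen witness sentence $\mu$; the remaining steps are either direct appeals to cited results or straightforward consequences of Fact \ref{fact_modeltheory_ct_0}.
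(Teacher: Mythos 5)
Your decomposition matches the paper's, which treats this Fact as a bundle of mostly known results: finite axiomatizability of $\CTm$ by inspection, of $\CTz$ via the cited equivalence with global reflection over $\CTm$, conservativity of $\CTm$ as a known model-theoretic result, and non-conservativity of $\CTz$ from the fact that its arithmetical consequences exceed those of $\PA$. Your reflection argument for the last point ($\CTm\vdash\UTBm$, so global reflection yields uniform reflection over $\PA$ and hence $\Con_{\PA}$) is the standard justification; note that $\CTz\vdash\PA$ already suffices, since $\PA\vdash\Con_{\EA}$ while $\EA$ does not.

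Two caveats on the parts the paper leaves implicit. First, for conservativity of $\CTm$ over $\EA$ the relevant construction is Enayat--Visser's (the paper's appendix cites it for expanding elementary extensions to models of $\CTm$); the Kotlarski--Krajewski--Lachlan argument is tied to countable recursively saturated models of $\PA$ and does not adapt to $\EA$ in any routine way. Second, and more substantively, your arguments for the negative claims are not sound as written. For $\TBm,\UTBm,\DEFm$ the clean argument is compactness plus a disjunctive interpretation: a finite axiomatization would follow from $\EA$ plus finitely many instances of the scheme, and interpreting $T$ (resp.\ $D$) as the finite disjunction of the retained cases gives a model violating some omitted instance; no flexible formulas are needed, and it is unclear how Theorem \ref{tw_flex} (which produces arithmetical $\Sigma_n$-formulas flexible over arithmetical theories) would supply the sentence $\mu$ you describe. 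For $\TB,\UTB,\CT,\DEF$, the claim that a bounded-depth axiomatization ``can derive induction only for $\LT$-formulas whose $T$-occurrences respect that depth bound'' is false as stated --- bounded-depth axioms routinely prove theorems of unbounded depth --- and the genuine Ryll-Nardzewski-style argument must construct a model of the finite fragment in which some induction instance fails (alternatively, one uses reflexivity of these theories together with G\"odel's second incompleteness theorem). These gaps concern precisely the claims the paper itself does not prove, but since the ``only'' in the statement rests on them, they need to be repaired.
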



\subsubsection{Between Disquotation and Compositionality}\label{sec_between_disq_and_comp}


\begin{definition}\label{def_ctm(x)}
$\CTm(x)$ is the conjunction of the following formulae of $\LT$:

\begin{itemize}
    \item $\EA$
    \item $\forall s,t \big(s,t\in\TmcLA\rightarrow\big(T[s=t]\leftrightarrow\val(s)=\val(t)\big)\big)$,
    \item $\forall\varphi\big(\sigma\in\Sent_{\LA}\land\dpt[\lnot\varphi]\leqslant x\rightarrow\big(T[\lnot\varphi]\leftrightarrow\lnot T(\varphi)\big)\big)$,
    \item $\forall\varphi,\psi\big(\varphi,\psi\in\Sent_{\LA}\land\dpt[\varphi\land\psi]\leqslant x\rightarrow(T[\varphi\land\psi]\leftrightarrow T(\varphi)\land T(\psi)\big)\big)$,
    \item $\forall\varphi\forall v\big(v\in\Var\land[\exists v.\varphi]\in\Sent_{\LA}\land\dpt[\exists v.\varphi]\leqslant x\rightarrow\big(T[\exists v.\varphi]\leftrightarrow\exists z.T[\varphi(\dot{z}/v)]\big)\big)$.
\end{itemize}
\end{definition}

\begin{proposition}
For every $n$, $\EA\RHD\CTm(\num{n})$.
\end{proposition}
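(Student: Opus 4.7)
The plan is to interpret the truth predicate $T$ via the partial truth predicates $\Tr_n$ constructed in Section~2.1. Fix a standard $n$ and let $\sigma\colon \LT\to \LA$ be the translation which is the identity on the arithmetic signature and sends the predicate $T(x)$ to the formula $\Tr_n(x)$. Since $\CTm(\num{n})$ is a finite conjunction, it suffices to show that $\EA$ proves each of the four translated compositional clauses (the $\EA$ conjunct being preserved by $\sigma$).

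The content of these clauses is essentially property~(2) of $\Tr_n$ stated in the preliminaries, specialised to the bounded case $\dpt\leqslant \num n$. For the atomic clause, iterating the first disjunct $\dpt(x)<\num{k}\wedge \Tr_{k-1}(x)$ as $k$ descends from $n$ to $1$ reduces $\Tr_n[s=t]$ to $\Tr_0[s=t]$, which by definition equals $\val(s)=\val(t)$. For the negation clause with $\dpt[\neg\varphi]=\num{k}\leqslant \num{n}$: if $k=n$, then the second disjunct of the definition of $\Tr_n$ directly yields $\Tr_n[\neg\varphi]\leftrightarrow \neg\Tr_{n-1}(\varphi)$, while $\Tr_n(\varphi)$ collapses to $\Tr_{n-1}(\varphi)$ via the first disjunct (since $\dpt(\varphi)=\num{n-1}<\num n$); if $k<n$, both sides of the biconditional collapse to their $\Tr_{k}$-analogues, and one concludes by a meta-induction on $n$. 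The conjunction and existential clauses are handled analogously, using the third and fourth disjuncts of the definition of $\Tr_n$ in place of the second.

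There is no serious obstacle: because $n$ is a fixed standard numeral, the whole verification is a bounded case analysis involving only $\Delta_0$-definable syntactic operations (depth, $\val$, constructor recognition), all provably total in $\EA$. The only slightly delicate point is making sure that the depth-bookkeeping is uniform across the four clauses, but for each fixed $n$ the unfolding has only finitely many cases, so this reduces to inspection. Let us stress that the standardness of $n$ is essential: the naive attempt to replace $\num n$ with a free variable would yield an $\LA$-formula defining truth for all of $\LA$, contradicting Tarski's theorem.
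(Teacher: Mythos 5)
Your proposal is correct and follows essentially the same route as the paper: the paper's (one-sentence) proof sketch likewise interprets $T$ by the partial truth predicate $\Tr_{n}$ and appeals to the fact that, provably in $\EA$, $\Tr_n$ satisfies the Tarski clauses for formulae of depth at most $n$. Your unfolding of the four clauses by descending through the disjuncts of $\Tr_n$ is just the detailed verification the paper leaves implicit.
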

\begin{proof}[Sketch of proof]
This follows since for every $n$ there is an arithmetical formula $\Tr_{n}(x)$, which, provably in $\EA$ satisfies Tarski's inductive truth clauses for formulae of complexity at most $x$. 
\end{proof}

The proposition below was well known for some time already (a variant of it is given e.g. in \cite{bekpakhannals} and \cite{viss19enayat}). We include a simple proof for the Reader's convenience.

\begin{proposition}
$\{\CTm(\num{n}):\ n\in\omega\}\RLHD\UTBm$
\end{proposition}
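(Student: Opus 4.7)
The plan is to establish the two directions of the mutual $\LHD$-equivalence separately, reading $\{\CTm(\num{n}): n\in\omega\}\RLHD\UTBm$ as mutual definability between $\UTBm$ and the infinite theory $\widetilde{\CTm} := \bigcup_{n\in\omega}\CTm(\num{n})$.

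For the direction $\UTBm \LHD \widetilde{\CTm}$, I would take the identity translation on the shared fresh unary predicate $T$. For each UTB-axiom $\alpha_\varphi := \forall x(T[\varphi(\dot{x})]\leftrightarrow \varphi(x))$, where $\varphi(x)$ is an $\LA$-formula, the goal is to exhibit some $\CTm(\num{m})\in\widetilde{\CTm}$ proving $\alpha_\varphi$. Taking $m = \dpt(\varphi)$, I would proceed by external (meta-level) induction on the structure of $\varphi$. Because the induction must pass through quantifier cases that introduce extra free variables, I would strengthen the induction hypothesis to: for every subformula $\psi(\vec{z})$ of $\varphi$, $\CTm(\num{m})\vdash\forall\vec{z}(T[\psi(\dot{\vec{z}})]\leftrightarrow\psi(\vec{z}))$. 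The base case for atomic $\psi$ reduces to the atomic clause of $\CTm$ together with the $\EA$-provable identities $\val(\num{a}) = a$. The inductive steps invoke the compositional $\neg$-, $\wedge$-, and $\exists$-clauses of $\CTm(\num{m})$, all of which are available because every subformula of $\varphi$ has depth at most $m$.

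For the converse $\widetilde{\CTm}\LHD\UTBm$, I would use a single translation $\sigma$ sending $T(x)$ to $\Tr_{\dpt(x)}(x)$. Since the map $n\mapsto \qcr{\Tr_n(x)}$ is elementary, this substitution yields a legitimate arithmetical formula, in fact definable already in $\EA$. By construction of the partial truth predicates, $\EA$ proves that each $\Tr_n$ satisfies the Tarski compositional clauses for formulas of depth at most $n$; hence every axiom of every $\CTm(\num{m})$ becomes, after applying $\sigma$, an $\EA$-theorem, and is therefore provable in $\UTBm\supseteq\EA$ as well.

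The only real work lies in the first direction, and even there it is mainly book-keeping: one must track free variables through the external induction and be careful about the interplay between object-level substitution and the formalised substitution operation $[\varphi(\dot{z}/v)]$ used in the $\exists$-clause of $\CTm(\num{m})$. There is no hidden obstacle — everything reduces to the two standard ingredients, namely compositionality of $\CTm(\num{m})$ for depth $\leq m$ and the $\EA$-provable compositionality of the arithmetical $\Tr_n$'s.
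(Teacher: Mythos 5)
Your first direction is correct and is exactly the paper's argument (external induction on subformulas, using the compositional clauses of $\CTm(\num{m})$ for $m=\dpt(\varphi)$). The second direction, however, contains a genuine error: $\Tr_{\dpt(x)}(x)$ is not a formula. The $\Tr_n$ are a sequence of \emph{distinct} arithmetical formulas of unboundedly growing depth, and the fact that $n\mapsto\qcr{\Tr_n(x)}$ is elementary only gives you an arithmetical formula computing the \emph{code} of $\Tr_n$ from $n$; it does not let you uniformize the family into a single formula whose truth value at $(n,a)$ is that of $\Tr_n(a)$ --- that would be a universal satisfaction predicate. Moreover, no repair within $\LA$ is possible: if some arithmetical $\Theta(x)$ made every $\sigma(\CTm(\num{n}))$ provable in $\EA$ (or even just true in $\N$), then by your own first direction $\Theta$ would satisfy the uniform Tarski biconditionals for all of $\LA$, making $\EA$ (resp.\ $\Th(\N)$) a theory of truth for $\LA$ and contradicting Tarski's undefinability theorem. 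So the translation for this direction \emph{must} genuinely use the truth predicate of $\UTBm$.

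The paper's fix is to set
\[
T'(x):=\Sent_{\LA}(x)\wedge\exists z\bigl(\dpt(x)=z\wedge T[\Tr_z(\dot{x})]\bigr),
\]
i.e.\ to apply $\UTBm$'s predicate $T$ to the \emph{code} of $\Tr_z$ evaluated at $x$, which is a legitimate $\mathcal{L}_T$-formula since the coding map is elementary. One then verifies $\sigma(\CTm(\num{n}))$ in $\UTBm$ for each standard $n$ by invoking the uniform disquotation axioms for the finitely many standard formulas $\Tr_0,\dots,\Tr_n$ (to reduce $T[\Tr_d(\dot{x})]$ to $\Tr_d(x)$ for standard $d\leqslant n$) together with the $\EA$-provable compositionality of the $\Tr_d$'s. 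With this replacement your argument goes through.
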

\begin{proof}[Sketch of proof]
By a standard argument using induction on the depth of formulae, it is easy to show that actually $\{\CTm(\num{n}):\ n\in\omega\}\vdash \UTBm$. In the reverse direction, we define the following truth predicate $T'(x)$:
\[\Sent_{\LA}(x)\wedge \exists z (\dpt(x) = z\wedge T[\Tr_z(\dot{x})]).\]
In the above $\Tr_z$ denotes the representation of the elementary mapping $n\mapsto \qcr{\Tr_n(x)}$ described in Section \ref{sec_arithm}. One easily checks that the mapping $\sigma$ given by $\sigma(T)= T'(x)$ makes each $\sigma(\CT^-(\num{n}))$ provable in $\UTBm$. 

\end{proof}

The above proposition witnesses that $\UTBm$ can be treated as the infimum of compositional theories: it can accomodate a truth predicate which satisfies compositional Tarski truth conditions for all formulae of standard complexity. Therefore it is natural to treat a definition of truth as compositional if it can define the theory $\UTBm$. This is the philosophical intuition underlying the following formal question by Albert Visser (see \cite{viss19enayat}):

\begin{question}[Visser]
    Is there a definition of truth (for $\LA$) which does not define $\UTBm$?
\end{question}

This is the question which initiated our research. Unfortunately it still remains unanswered.


\subsection{Algebraical Preliminaries}\label{sect_algebra}


We take all the basic definitions from \cite{gratz_lattice} and \cite{Handbook_BAs}.

\begin{definition}[Lattice]
A \emph{lattice} is a set $L$ with two operations $\cap$ and $\cup$ each of which is associative and symmetric and additionally satisfying $a\cap(b\cup a) = a$ and $a\cup(b\cap a) = a$, for all $a,b\in L$. A lattice $L$ is \emph{distributive} if for all $a,b,c$ it holds that $a\cup(b\cap c) = (a\cup b)\cap (a\cup c)$ and $a\cap(b\cup c) = (a\cap b)\cup (a\cap c)$. 
\end{definition}

One can show that each lattice $L$ originates from a partial order $(L, \leqslant_{L})$ by putting $a\cup b = \sup\{a,b\}$ and $a\cap b = \inf\{a,b\}$. In fact, this partial order $\leqslant_L$ is canonically determined by $L$ by letting $a\leqslant_L b := a\cap b = a$. Conversely, for every partial order, so defined structure is a lattice.

\begin{definition}[Countable universal distributive lattice]
    Lattice $L$ is \emph{countable universal} iff $L$ contains an isomorphic copy of every countable lattice.
\end{definition}

\begin{definition}[Boolean algebra]
A \emph{Boolean algebra} is a structure $\langle B, \cap, \cup, \mathbb{0}, \mathbb{1}, \neg\rangle$ such that $\neg$ is a unary function, $\mathbb{0}, \mathbb{1}\in L$ and $\langle B, \cap, \cup\rangle$ is a distributive lattice,  which additionally satisfies $\forall a\in L (a\cup\mathbb{1} = \mathbb{1} \wedge \mathbb{0}\cap a = \mathbb{0})$ and $\forall a \in L(a\cap \neg a = \mathbb{0} \wedge a \cup \neg a = \mathbb{1}).$

An atom in a Boolean algebra is any nonzero element $a$, such that for every $b$, if $b\leqslant_B a$, then $b = a\vee b = \mathbb{0}$. A Boolean algebra $B$ is called \emph{atomless} iff there are no atoms in $B$.
\end{definition}

It is an easy exercise in back-and-forth arguments that any two countable atomless Boolean algebras are isomorphic. Additionally, we have the following observation which we use in the rest of the paper.
\begin{fact}
    If $\langle B, \cap,\cup, \mathbb{0},\mathbb{1}, \neg\rangle$ is the countable atomless Boolean algebra, then $\langle B, \cap, \cup\rangle$ is a countable universal distributive lattice. 
\end{fact}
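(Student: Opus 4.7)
The plan is to factor the desired embedding through an intermediate countable Boolean algebra. Given a countable distributive lattice $L$, I would first embed $L$ into a countable Boolean algebra $B_L$, and then embed $B_L$ into the countable atomless Boolean algebra $B$, obtaining the required lattice embedding $L \hookrightarrow B$ by composition --- the composition is a lattice embedding since any Boolean algebra embedding automatically preserves $\cap$ and $\cup$.

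For the first step I would appeal to Birkhoff-style representation: the map sending $a \in L$ to the set of prime filters of $L$ containing $a$ is a lattice homomorphism into the powerset $\mathcal{P}(\mathrm{Fil}_p(L))$, and its injectivity is secured by the prime ideal theorem for distributive lattices (which suffices to separate any two distinct elements). Let $B_L$ be the Boolean subalgebra of $\mathcal{P}(\mathrm{Fil}_p(L))$ generated by the image of $L$. Since $L$ is countable and the Boolean operations generate only countably many new elements from a countable set, $B_L$ is a countable Boolean algebra into which $L$ embeds as a sublattice.

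For the second step I would invoke the classical universality of the countable atomless Boolean algebra among countable Boolean algebras. The proof is a back-and-forth construction: enumerate $B_L = \{b_0, b_1, \ldots\}$ and build an embedding $\iota \colon B_L \to B$ inductively, at each stage extending $\iota$ to the finite Boolean subalgebra generated by $b_0, \ldots, b_n$. The extension step exploits the fact that any nonzero element of an atomless Boolean algebra can be split into two disjoint nonzero pieces, so arbitrarily fine partitions are available and any finite Boolean configuration over the already-mapped elements can be realized. This can also be viewed as an instance of Fra\"iss\'e's theorem applied to the amalgamation class of finite Boolean algebras, of which $B$ is the Fra\"iss\'e limit.

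The chief technical burden is the bookkeeping in the back-and-forth in step two --- arranging matters so that the extended $\iota$ remains a Boolean homomorphism and is injective. This, however, is entirely textbook material, and I would simply reference a standard treatment (for instance, the \emph{Handbook of Boolean Algebras}) rather than reproduce it, since nothing specific to the rest of the paper enters here.
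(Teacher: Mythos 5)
Your proof is correct. The paper does not prove this fact itself --- it simply cites Theorem 10 of Vermeer --- and your two-step factorization (embed the countable distributive lattice into a countable Boolean algebra via the prime-filter representation, then embed that into the countable atomless Boolean algebra by the forth-only back-and-forth/Fra\"iss\'e universality argument) is exactly the standard argument behind that citation, so there is nothing to object to.
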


For a proof see Theorem 10 of \cite{vermeer}.

\begin{example}[An elementarily presented countable atomless Boolean algebra]
    There are many different ways of realizing the countable atomless Boolean algebra. In what follows we need one, whose basic properties are provable in $\EA$. One of the simplest such algebras is the Boolean algebra of (equivalence classes) of propositional formulae over countable many propositional variables. To be more precise, let Prop be the set of all formulae of propositional logic over $\{p_i\}_{i\in\omega}.$ Let $B$ be the set of equivalence classes of Prop modulo propositional equivalence, $\sim$. We define the operations of $\cap$, $\cup$, $\neg$ and $\mathbb{0}$ and $\mathbb{1}$ in the canonical way, for example
    \[[\varphi]_{\sim}\cap [\psi]_{\sim} = [\varphi\wedge \psi]_{\sim}.\]
    Let us observe that there is the canonical way to represent $B$ inside $\EA$: since $\sim$ is an elementary relation, we can take the elements of $B$ to be the least (according to a fixed coding of Prop) elements of equivalence classes. In this way, we obtain elementary formulae $\delta_B(x)$, $\cap(x,y,z)$, $\cup(x,y,z)$, $\mathbb{0}(x)$, $\mathbb{1}(x)$, $\neg(x,y)$ such that
    \[\EA \vdash "\langle \delta_B(x), \cap(x,y,z), \cup(x,y,z), \mathbb{0}(x), \mathbb{1}(x), \neg(x,y)\rangle \textnormal{ is an atomless Boolean algebra }".\]
\end{example}

\begin{definition}
A subset $\mathcal{F}$ of a Boolean algebra $\mathbb{B}$ is a \emph{filter} if and only if it satisfies the following:
\begin{itemize}
    \item $\mathbb{1}\in\mathcal{F}$,
    \item $\forall x, y\in\mathbb{B}(x\in\mathcal{F}\land x\leqslant_{\mathbb{B}}y\rightarrow y\in\mathcal{F})$,
    \item $\forall x, y\in\mathbb{B}(x\in\mathcal{F}\land y\in\mathcal{F}\rightarrow x\cap y\in\mathcal{F})$.
\end{itemize}
It is an \emph{ultrafilter} if additionally $\mathbb{0}\notin\mathcal{F}$ and every filter $\mathcal{E}$ properly containing $\mathcal{F}$ is the entire algebra. 
\end{definition}

Ultrafilters are sometimes described as measures of the size of elements of a Boolean algebra (elements of an ultrafilter are considered large). We also use a different point of view.

\begin{fact}\label{fact_ultrafilters}
Let $\mathcal{F}$ be a subset of a Boolean algebra $\mathbb{B}$. The following are equivalent:
\begin{itemize}
    \item $\mathcal{F}$ is an ultrafilter;
    \item $\mathcal{F}$ is a filter, $\forall x(x\in\mathcal{F}\lor\lnot x\in\mathcal{F}$), and $\mathbb{0}\notin\mathcal{F}$;
    \item $\mathcal{F}$ satisfies
    \begin{itemize}
        \item $\mathbb{1}\in\mathcal{F}$ and $\mathbb{0}\notin\mathcal{F}$,
        \item $\forall x,y(x\in\mathcal{F}\land y\in\mathcal{F}\leftrightarrow x\cap y\in\mathcal{F})$,
        \item $\forall x,y(x\in\mathcal{F}\lor y\in\mathcal{F}\leftrightarrow x\cup y\in\mathcal{F})$,
        \item $\forall x(x\in\mathcal{F}\lor\lnot x\in\mathcal{F})$.
    \end{itemize}
\end{itemize}
\end{fact}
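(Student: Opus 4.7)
My plan is to prove the three-way equivalence by establishing $(1)\Leftrightarrow(2)$ and then $(2)\Leftrightarrow(3)$, both times by direct unpacking of definitions and one filter-extension argument.

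For $(1)\Rightarrow(2)$, the only condition not already part of the definition of an ultrafilter is the excluded-middle clause $\forall x(x\in\mathcal{F}\lor \lnot x\in\mathcal{F})$. I would prove its contrapositive: assume $x\notin\mathcal{F}$ and $\lnot x\notin \mathcal{F}$, and derive a contradiction with maximality. The key computation is to verify that
\[\mathcal{F}':=\{z\in \mathbb{B} : \exists y\in\mathcal{F}.\ y\cap \lnot x\leqslant_{\mathbb{B}} z\}\]
is a filter containing $\mathcal{F}\cup\{\lnot x\}$, hence strictly containing $\mathcal{F}$. By maximality, $\mathcal{F}'=\mathbb{B}$, so in particular $\mathbb{0}\in\mathcal{F}'$; but then some $y\in\mathcal{F}$ has $y\cap\lnot x = \mathbb{0}$, i.e.\ $y\leqslant_{\mathbb{B}} x$, and upward closure forces $x\in\mathcal{F}$, contradicting the assumption.

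For $(2)\Rightarrow(1)$ the maximality is an easy consequence: if $\mathcal{E}\supsetneq\mathcal{F}$ is a filter and $y\in\mathcal{E}\setminus\mathcal{F}$, then $\lnot y\in\mathcal{F}\subseteq\mathcal{E}$, so $\mathbb{0}=y\cap\lnot y\in\mathcal{E}$ and hence $\mathcal{E}=\mathbb{B}$ by upward closure.

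For $(2)\Leftrightarrow(3)$ the content is just repackaging. The clauses of (3) about $\mathbb{0},\mathbb{1}$, the biconditional for $\cap$, and excluded middle are either immediate from (2) or are explicit parts of the filter definition; the only nonobvious steps are (a) recovering upward closure in $\mathcal{F}$ from (3), which follows by noting that $x\leqslant_{\mathbb{B}} y$ implies $x\cup y=y$ and applying the $\cup$ clause; and (b) deriving the biconditional $x\in\mathcal{F}\lor y\in\mathcal{F}\leftrightarrow x\cup y\in\mathcal{F}$ from (2). The nontrivial implication here is left-to-right of the converse: if $x\cup y\in\mathcal{F}$ and both $x,y\notin\mathcal{F}$, then $\lnot x,\lnot y\in\mathcal{F}$ by excluded middle, whence $\lnot(x\cup y)=\lnot x\cap\lnot y\in\mathcal{F}$ using de Morgan and the $\cap$-closure; combined with $x\cup y\in\mathcal{F}$ this yields $\mathbb{0}\in\mathcal{F}$, contradicting (2). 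The right-to-left direction is immediate by upward closure.

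The arguments are entirely routine; the only step that requires any thought is the filter-extension construction in $(1)\Rightarrow(2)$, and even that is a textbook manipulation. I do not expect any real obstacle.
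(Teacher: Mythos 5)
Your proof is correct; all three implications are handled properly, and the filter-extension argument for $(1)\Rightarrow(2)$ together with the excluded-middle/De Morgan argument for the $\cup$-biconditional are exactly the standard textbook manipulations. The paper itself gives no proof of this fact — it simply refers the reader to Section 2.2 of the Handbook of Boolean Algebras — so there is nothing to compare against; your write-up fills in precisely the routine argument the authors chose to omit.
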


For more informations see Section 2.2. of \cite{Handbook_BAs}.



\section{Structure of Definability Order}


\subsection{Dense Subsets of $\DEF$}


We begin by presenting a tiny generalisation of the argument due to Pakhomov and Visser (see \cite{viss19enayat} and \cite{pakhomov_visser}). The proof uses a class of natural weakly compositional definitions of truth. The definition of this class utilises the formula $\CTm(x)$ as introduced in Definition \ref{def_ctm(x)}.

\begin{definition}[Weakly compositional definitions of truth]\label{wCDEF}
For a formula $\varphi(x)$, we define $\CTo_{\varphi(x)}$ as $\EA\ +\ \forall x(\varphi(x)\rightarrow\CTm(x))$.
\end{definition}

\begin{theorem}[Essentially Pakhomov-Visser]\label{tw_pakh-viss}
$\DEF$ is not $\Sigma_2$-definable.
\end{theorem}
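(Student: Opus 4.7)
The plan is to adapt the flexibility-based diagonal argument of Pakhomov and Visser, with the weakly compositional definitions $\CTo_\varphi$ from Definition~\ref{wCDEF} as the central vehicle. Suppose for contradiction that $\delta(x)$ is a $\Sigma_2$ formula defining $\DEF$ in $\N$. The map $\varphi \mapsto \qcr{\CTo_\varphi}$ is elementary, so the $\Sigma_2$ hypothesis on $\DEF$ pulls back along it to give a $\Sigma_2$ characterisation of those $\varphi$ for which $\CTo_\varphi$ is a truth definition; the strategy is to engineer a self-referential $\varphi$ whose membership in this set must be both $\top$ and $\bot$.

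The crux is the following key lemma: for any $\Sigma_1$ formula $\varphi(x)$, $\CTo_\varphi \in \DEF$ if and only if $\varphi$ is satisfied at cofinally many standard numbers. The ``if''-direction is the easy half: when $\varphi$ holds at arbitrarily large standard $m$, provable $\Sigma_1$-completeness of $\EA$ gives $\EA \vdash \varphi(\num{m})$ for cofinally many $m$, whence $\CTo_\varphi \vdash \CTm(\num{m})$, and the standard induction-on-depth argument yields $\CTo_\varphi \vdash T(\qcr{\sigma}) \leftrightarrow \sigma$ for every standard $\sigma$, so that $T$ itself serves as the truth predicate. The ``only if''-direction is the delicate one: when $\varphi$ is only boundedly satisfied in $\N$, one must show that no formula $\Theta(x) \in \LT$ can witness $\CTo_\varphi \in \DEF$. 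Arithmetical candidates $\Theta$ are excluded by arithmetical conservativity of $\CTo_\varphi$ over $\EA$ (established semantically, by interpreting $T$ in each model via a partial truth predicate $\Tr_k$ of appropriate level, taking advantage of models where $\varphi$ stays bounded) combined with Tarski's undefinability theorem; candidates involving $T$ are excluded by an adversarial construction producing, for each such $\Theta$, a model of $\CTo_\varphi$ in which $T$ is chosen on high-depth sentences so as to falsify $\Theta(\qcr{\sigma}) \leftrightarrow \sigma$ for some standard $\sigma$.

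Granted the key lemma, the contradiction follows by diagonalisation. Writing $\delta(v) = \exists y\, \forall z\, S(y,z,v)$ with $S \in \Delta_0$, use the $\Sigma_1$-preserving fixed-point lemma to find a $\Sigma_1$ formula
\[\theta(x) \;\equiv\; \forall y \leqslant x\, \exists z\, \neg S(y,z,\qcr{\CTo_\theta}),\]
which is $\Sigma_1$ by bounded-quantifier absorption over $\EA$. Then $\theta$ is cofinally satisfied in $\N$ iff $\forall y\, \exists z\, \neg S(y,z,n)$ (for $n := \qcr{\CTo_\theta}$), iff $\neg\delta(\num{n})$, iff $\CTo_\theta \notin \DEF$. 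Combining with both directions of the key lemma produces the loop
\[\CTo_\theta \in \DEF \;\leftrightarrow\; \theta \text{ cofinal in } \N \;\leftrightarrow\; \neg\delta(\num{n}) \;\leftrightarrow\; \CTo_\theta \notin \DEF,\]
the desired contradiction. The main obstacle is the ``only if''-direction of the key lemma: ruling out \emph{every} $\Theta \in \LT$ (not merely $\Theta = T$) as a truth predicate for $\CTo_\varphi$ when $\varphi$ is bounded, which demands substantial model-theoretic work---engineering, for each candidate $\Theta$, a model of $\CTo_\varphi$ in which $T$ can be freely manipulated on formulas of depth above the bound of $\varphi$---well beyond a direct appeal to Tarski's theorem.
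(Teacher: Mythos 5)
Your skeleton (diagonalise a putative $\Sigma_2$ definition of $\DEF$ against a self-referential $\CTo_\theta$ whose cofinality in $\N$ is tied to non-membership) is the same as the paper's, but the entire content of the theorem sits in the ``only if'' direction of your key lemma, and that is exactly the step you do not carry out. You propose to exclude candidates $\Theta\in\LT$ by ``an adversarial construction producing, for each such $\Theta$, a model of $\CTo_\varphi$ in which $T$ is chosen on high-depth sentences so as to falsify $\Theta(\qcr{\sigma})\leftrightarrow\sigma$''; no such construction is given, and it is far from clear that one exists uniformly in $\Theta$ (a clever $\Theta$ need not mention $T$ on high-depth sentences at all). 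The paper disposes of this step without any model construction: if $\varphi$ is boundedly satisfied in $\N$, say $\N\models\forall x\,(\varphi(x)\rightarrow x<\num{n})$, then $\Th(\N)$ proves the single axiom of $\CTo_\varphi$ under the translation $T(x)\mapsto\Tr_n(x)$, i.e.\ $\Th(\N)\RHD\CTo_\varphi+\Th(\N)$. Hence if some $\Theta$ witnessed $\CTo_\varphi\in\DEF$, its image under this translation would be an \emph{arithmetical} formula satisfying all Tarski biconditionals provably in $\Th(\N)$, making $\Th(\N)$ a theory of truth for $\LA$ --- contradicting Tarski's undefinability theorem. So the step you describe as ``well beyond a direct appeal to Tarski's theorem'' is, in the actual argument, precisely a direct appeal to Tarski's theorem, only applied to $\Th(\N)$ rather than to $\CTo_\varphi$ itself; the interpretation $T\mapsto\Tr_n$ handles all candidates $\Theta$ simultaneously. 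Your parallel claim that arithmetical candidates are excluded by conservativity of $\CTo_\varphi$ over $\EA$ is also unsubstantiated for a general bounded $\Sigma_1$ formula $\varphi$, since $\varphi$ may be unbounded in nonstandard models and the expansion by $\Tr_k$ need not exist there; adding $\Th(\N)$ again sidesteps this.

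A secondary but real flaw: your fixed point $\theta(x)\equiv\forall y\leqslant x\,\exists z\,\neg S(y,z,\qcr{\CTo_\theta})$ is claimed to be $\Sigma_1$ ``by bounded-quantifier absorption over $\EA$'', but moving a bounded universal quantifier past an unbounded existential one requires the collection scheme $B\Sigma_1$, which $\EA$ does not prove. You need the $\Sigma_1$-ness only to get $\EA\vdash\theta(\num{m})$ for each standard $m$ in the cofinal case, and the paper obtains this without any absorption: it takes the fixed point $\delta$ in $\Pi_1$-form, so that each true instance $\neg\delta(\num{k})$ is $\Sigma_1$ and hence $\EA$-provable, and then $\EA\vdash\forall y<\num{m}.\neg\delta(y)$ follows from finitely many such instances. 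Neither defect is fatal to the strategy, but as written the proposal does not constitute a proof.
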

\begin{proof}
Aiming at a contradiction, suppose the contrary. Let $\sigma(x)$ be a $\Sigma_2$-definition of $\DEF$. By the fixed point lemma, let $\delta(x)$ be a $\Pi_1$-formula such that $\EA$ proves 
$$\exists x.\delta(x)\leftrightarrow\sigma(\qcr{\CTo_{\forall y<x.\lnot\delta(y)}}){.}$$
More precisely, the fixed point lemma is applied to the following formula with a free variable $z$:
\begin{equation}\label{equat_fxp}
    z\in\Sent_{\LA}\land\exists \delta\in\Pi_1\bigl(z=[\exists x.\delta] \land\sigma[\CTo_{\forall y<x.\neg\delta(y)}]\bigr)\text{.}
\end{equation}
In the above, $z=\exists x.\delta$ denotes a formula with two variables $z$ and $\delta$, formalising a relation "$z$~is the G\"odel code of a sentence formed by prefixing a formula $\delta$ with an existential quantifier binding the variable $x$". Similarly, $\sigma[\CTo_{\forall y<x.\neg\delta(y)}]$ is a formula with a free variable~$\delta$. When constructing a fixed point formula, we make sure that it is in the proper $\Sigma_2$-form, beginning with exactly one existential quantifier. This is possible by the standard textbook proof of the diagonal lemma (see e.g. \cite{hapu98}, Chapter III, Theorem 2.1).\footnote{We stress that we are not taking an arbitrary fixed point of \eqref{equat_fxp}. It is easy to observe that, for trivial reasons, any disprovable formula which is not in a proper $\Sigma_2$-form will be a fixed point of \eqref{equat_fxp}.}

\medskip
The sentence $\exists x.\delta(x)$ must then be false in the standard model. To prove this, suppose the opposite. Then $\CTo_{\forall y< x.\lnot\delta(y)}\in \DEF$. At the same time there exists the smallest natural number $n$ such that $\N\models\delta(\num{n})$. It follows that
$$\N\models\forall y<x.\lnot\delta(y)\leftrightarrow x<\num{n}\text{.}$$ Hence $\CTo_{\forall y< x.\lnot\delta(y)} + \Th(\N)$ is equivalent to $\CTm(\num{n})+\Th(\N)$, where $\Th(\N)$ denotes the set of arithmetical sentences which are true in the standard model of arithmetic. It follows that $\CTm(\num{n})+\Th(\N)$ is a theory of truth for $\LA$. But $\Th(\N)\RHD\CTm(\num{n})+\Th(\N)$, using partial truth predicates $\Tr_n$. Hence $\Th(\N)$ is a theory of truth for $\LA$. This contradicts Tarski's theorem on the undefinability of truth.

\medskip
Since the sentence $\exists x.\delta(x)$ is false in the standard model, $\CTo_{\forall y<x.\lnot\delta(y)}\notin \DEF$. At the same time $\EA\vdash\lnot\delta(\num{n})$ for every natural number $n$, by $\Sigma_{1}$-completeness. Hence $\CTo_{\forall y<x.\lnot\delta(y)}$ is a definition of truth. A contradiction.

\end{proof}

\medskip

We say that a subset $B$ of a pre-order $(A,\leqslant)$ is \emph{dense} if for every $a\in A$ there is $b\in B$ such that $b\leqslant a$.
\begin{corollary}
    Suppose that $B$ is a $\Sigma_2$-definable subset of $\DEF$. Then $B$ is not dense in $\langle\DEF,\LHD\rangle.$
\end{corollary}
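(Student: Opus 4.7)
The plan is to reduce directly to Theorem~\ref{tw_pakh-viss}: if $B$ were a $\Sigma_2$-definable $\LHD$-dense subset of $\DEF$, then $\DEF$ itself would be $\Sigma_2$-definable, which that theorem forbids. So I assume for contradiction that $B \subseteq \DEF$ is both $\Sigma_2$-definable and dense in $\langle \DEF, \LHD\rangle$, and aim to extract from $B$ a $\Sigma_2$-definition of $\DEF$.

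The key observation is that membership in $\DEF$ is upward closed along $\LHD$ modulo consistency. Indeed, if $\beta \in \DEF$ has truth predicate $\Theta$ and $\sigma$ is a translation witnessing $\beta \LHD \alpha$, then, since $\beta \vdash \Theta(\qcr{\varphi}) \leftrightarrow \varphi$ for every $\LA$-sentence $\varphi$ and $\sigma$ is the identity on $\LA$, we get $\alpha \vdash \sigma[\Theta](\qcr{\varphi}) \leftrightarrow \varphi$; moreover $\alpha \vdash \sigma[\EA] = \EA$. So provided $\alpha$ is consistent, $\sigma[\Theta]$ witnesses $\alpha \in \DEF$. Combining this with density of $B$ yields the equivalence
\[
\alpha \in \DEF \iff \alpha \text{ is consistent and } \exists \beta \in B.\ \beta \LHD \alpha,
\]
where left-to-right uses density (together with the fact that every element of $\DEF$ is by definition consistent), and right-to-left uses the upward-closure observation.

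Finally I count complexities. The relation $\beta \LHD \alpha$ is $\Sigma_1$: it existentially quantifies over a code of a translation and over a proof of its application to $\beta$ from $\alpha$. Thus $\exists \beta \in B.\ \beta \LHD \alpha$ combines a $\Sigma_2$-formula (membership in $B$) with a $\Sigma_1$-matrix, and prenexing absorbs the extra existentials into the leading $\exists$-block, yielding a $\Sigma_2$-formula. Consistency of $\alpha$ is $\Pi_1$, and its conjunction with a $\Sigma_2$-formula is still $\Sigma_2$. This provides a $\Sigma_2$-definition of $\DEF$, contradicting Theorem~\ref{tw_pakh-viss}. The main point to be careful about, and the one small subtlety of the argument, is the explicit inclusion of consistency on the right-hand side of the equivalence: an inconsistent $\alpha$ would vacuously satisfy $\alpha \vdash \sigma[\beta]$ for any $\sigma$, so omitting the consistency clause would make the right-hand side strictly larger than $\DEF$ and destroy the equivalence.
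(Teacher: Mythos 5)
Your proof is correct and follows the same route as the paper's: density plus the $\Sigma_2$-definability of $B$ yields a $\Sigma_2$-definition of $\DEF$ via $\exists\beta\in B.\,\beta\LHD x$, contradicting Theorem \ref{tw_pakh-viss}. You are in fact slightly more careful than the paper's one-line argument, which omits the consistency conjunct needed to exclude inconsistent $\alpha$ (these vacuously define every $\beta$); your observation that this conjunct is $\Pi_1$ and hence does not raise the complexity above $\Sigma_2$ is exactly right.
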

\begin{proof}
    Assume that $B$ is dense in $\langle\DEF, \LHD\rangle$, then  $\sigma(x):=\exists \beta\in B. \beta\LHD x$ is a definition of $\DEF$. If $B$ had a $\Sigma_2$ definition, then $\sigma(x)$ would be $\Sigma_2$, contradicting Theorem \ref{tw_pakh-viss}.
\end{proof}

\begin{corollary}
There are no minimal elements in $\langle \DEF, \LHD\rangle$.
\end{corollary}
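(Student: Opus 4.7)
The plan is to show that a minimal element $\alpha$ would in fact have to be a minimum of $\langle\DEF,\LHD\rangle$, and then to derive a contradiction from the $\Sigma_2$-undefinability of $\DEF$ (Theorem \ref{tw_pakh-viss}).

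The key ingredient is a construction of a common lower bound. Given $\alpha,\gamma\in\DEF$, after renaming so that their nonarithmetical signatures are disjoint, I form the sentence $\alpha\vee\gamma$ in the combined signature. It extends $\EA$ (each disjunct does) and admits the case-analysis truth predicate
\[\Theta(x) \;:=\; \bigl(\alpha\wedge\Theta_\alpha(x)\bigr)\vee\bigl(\neg\alpha\wedge\Theta_\gamma(x)\bigr),\]
where $\Theta_\alpha,\Theta_\gamma$ are the truth predicates of $\alpha,\gamma$; the Tarski biconditionals for $\Theta$ are provable in $\alpha\vee\gamma$ by arguing on which disjunct holds, so $\alpha\vee\gamma\in\DEF$. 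Mapping the extra $\gamma$-predicates to $\bot$ (and keeping the identity on $\LA$ and the $\alpha$-predicates) sends $\alpha\vee\gamma$ to $\alpha\vee\sigma[\gamma]$, trivially provable in $\alpha$; hence $\alpha\vee\gamma\LHD\alpha$, and symmetrically $\alpha\vee\gamma\LHD\gamma$.

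Now assume $\alpha$ is minimal and fix an arbitrary $\gamma\in\DEF$. The construction produces $\alpha\vee\gamma\in\DEF$ with $\alpha\vee\gamma\LHD\alpha$, so minimality forces $\alpha\LHD\alpha\vee\gamma$, and composing with $\alpha\vee\gamma\LHD\gamma$ yields $\alpha\LHD\gamma$. Thus $\alpha$ is in fact a minimum, so $\DEF$ admits the definition
\[\{\gamma:\alpha\LHD\gamma\wedge\gamma\text{ is consistent}\}.\]
The relation $\alpha\LHD\gamma$ unfolds to the $\Sigma_1$ statement $\exists\sigma\,\Prov_\gamma(\sigma[\alpha])$, and consistency is $\Pi_1$, so the whole condition is $\Sigma_2$; the nontrivial inclusion $\supseteq$ uses the standard observation that $\alpha\LHD\gamma$ transports $\Theta_\alpha$ along the translation to a truth predicate inside $\gamma$. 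This contradicts Theorem \ref{tw_pakh-viss}. (Alternatively, one can phrase the contradiction through the preceding corollary by noting that under minimality the $\LHD$-equivalence class $[\alpha]=\{\beta\in\DEF:\alpha\LHD\beta\wedge\beta\LHD\alpha\}$ is a $\Sigma_2$-definable dense subset of $\DEF$.) The only delicate point I foresee is checking the Tarski biconditionals for $\Theta$ once the signatures have been disjointified, which is a routine syntactic verification.
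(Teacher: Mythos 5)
Your argument is correct and is essentially the paper's own proof in contrapositive form: the paper fixes $\alpha$, uses the $\Sigma_2$-undefinability of $\DEF$ (via the density corollary applied to the finite set $\{\alpha\}$) to find $\beta$ with $\alpha\ntriangleleft\beta$, and then shows the same disjunction $\alpha\lor\beta$ with the same case-split truth predicate is strictly below $\alpha$; you instead assume minimality, conclude via $\alpha\lor\gamma$ that $\alpha$ is a minimum, and read off a $\Sigma_2$ definition of $\DEF$ as the consistent upward cone of $\alpha$. The ingredients (the $\lor$-construction and Theorem \ref{tw_pakh-viss}) are identical, and your explicit consistency conjunct and the transport of $\Theta_\alpha$ along the translation are exactly the details the paper's density corollary relies on.
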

\begin{proof}
    Fix any $\alpha\in\DEF$. Since finite subsets are $\Sigma_2$-definable, then $\{\alpha\}$ is not dense, so there is $\beta$ such that $\beta\not\RHD\alpha$. Then $\alpha\vee\beta$ is a definition of truth which is strictly smaller than $\alpha$ in $\langle\DEF,\LHD\rangle.$ Indeed, let $\Theta_{\alpha}(x)$ and $\Theta_{\beta}(x)$ be two formulae interpreting the truth predicate in $\alpha$ and $\beta$ respectively. Then 
    $$(\alpha\rightarrow\Theta_{\alpha}(x))\land(\lnot\alpha\rightarrow \Theta_{\beta}(x))$$
acts as a truth-predicate in $\alpha\lor\beta$. Finally, $\alpha\RHD\alpha\lor\beta$, because $\alpha\vdash\alpha\lor\beta$, and $\alpha\lor\beta\not\RHD\alpha$, because otherwise $\beta\RHD\alpha$, as $\beta\RHD\alpha\lor\beta$.

\end{proof}

By the remark immediately following Fact \ref{fakt_definicje_prawdy} we obtain the following
\begin{corollary}
    There are no minimal elements in $\langle \cDEF, \LHD\rangle.$
\end{corollary}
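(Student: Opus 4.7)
The plan is to leverage the preceding corollary about the absence of minima in $\langle \DEF, \LHD\rangle$ together with the fact that $\cDEF$ sits inside $\DEF$ as a downward-closed subset under $\LHD$ (Remark \ref{rem_cons_init_def}). Concretely, fixing an arbitrary $\alpha\in\cDEF$, I would first apply the previous corollary to obtain some $\gamma\in\DEF$ with $\gamma\LHD\alpha$ and $\alpha\not\LHD\gamma$; the construction there produces $\gamma$ as a disjunction $\alpha\vee\beta$ for a suitably chosen $\beta\in\DEF$ with $\beta\not\RHD\alpha$.

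The key step then is to observe that $\gamma\in\cDEF$ automatically. This is immediate from the initial segment property stated in Remark \ref{rem_cons_init_def}: since $\gamma\LHD\alpha$ and $\alpha\in\cDEF$, definability transports conservativity downward, so $\gamma$ is a conservative extension of $\EA$ as well. Therefore $\gamma$ is a witness inside $\cDEF$ strictly below $\alpha$ in the definability order, and $\alpha$ is not minimal in $\langle\cDEF,\LHD\rangle$. Since $\alpha$ was arbitrary, $\langle\cDEF,\LHD\rangle$ has no minimal elements.

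There is essentially no obstacle here, as everything reduces to invoking two facts already in place. The only point worth double-checking is that the $\gamma$ constructed in the previous corollary really lies in $\DEF$ and not merely in some broader class, and that the strict inequality $\alpha\not\LHD\gamma$ is what the argument delivers; both are verified by inspection of the proof of the preceding corollary, so no additional work is required.
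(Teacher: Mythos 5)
Your argument is correct and is exactly the paper's: the paper derives this corollary in one line by combining the preceding corollary (no minimal elements in $\langle\DEF,\LHD\rangle$) with Remark \ref{rem_cons_init_def}, whose first clause says that $\cDEF$ is downward closed under $\LHD$, so the strictly smaller witness $\alpha\vee\beta$ lands in $\cDEF$. Nothing further is needed.
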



\subsection{Countable Universal Distributive Lattice}

\subsubsection{Suprema and Infima}


\begin{restatable}{theorem}{thmsupinf}
The definability order forms a distributive lattice. The infimum and supremum operations are derived from the propositional operations as follows.
\begin{itemize}
    \item The infimum of $\alpha$ and $\beta$ is $\alpha\lor\beta$.
    \item The supremum of $\alpha$ and $\beta$ is $\alpha\land\repsymb_{\alpha}(\beta)$, where $\repsymb_{\alpha}(\cdot)$ is a fixed translation that replaces non-arithmetic relational symbols in $\cdot$ with symbols of the same arity that are not present in~$\alpha$.
\end{itemize}
\end{restatable}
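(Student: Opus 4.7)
The plan is to verify the three claims in turn: that $\alpha\lor\beta$ is the infimum, that $\alpha\land\repsymb_\alpha(\beta)$ is the supremum, and that the resulting lattice is distributive. I rely throughout on the fact that $\cDEF$ is an initial segment of $\DEF$ (Remark \ref{rem_cons_init_def}), so any truth definition $\LHD$-below an element of $\cDEF$ automatically lies in $\cDEF$.

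For the infimum, $\alpha\lor\beta$ is a truth definition witnessed by the case-split predicate $\Theta(x):=(\alpha\rightarrow\Theta_\alpha(x))\land(\neg\alpha\rightarrow\Theta_\beta(x))$, where $\Theta_\alpha,\Theta_\beta$ are truth predicates of $\alpha,\beta$. The inequalities $\alpha\lor\beta\LHD\alpha$ and $\alpha\lor\beta\LHD\beta$ are witnessed by translations that send the non-arithmetic symbols of the other disjunct to $0=0$. For the universal property, given $\gamma\LHD\alpha$ via $\sigma_\alpha$ and $\gamma\LHD\beta$ via $\sigma_\beta$, I combine them into a single case-split translation
\[
\sigma(P)(\bar x):=(\alpha\land\sigma_\alpha(P)(\bar x))\lor(\neg\alpha\land\sigma_\beta(P)(\bar x));
\]
a case analysis on $\alpha$ inside $\alpha\lor\beta$ then shows $\sigma[\gamma]$ is provable.

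For the supremum, I assume WLOG that $\alpha,\beta$ already have disjoint non-arithmetic signatures. Then $\Theta_\alpha$, viewed in the enlarged language, is still a truth predicate for $\alpha\land\beta$, and the identity translations witness $\alpha,\beta\LHD\alpha\land\beta$. The universal property is dual: gluing $\sigma_\alpha,\sigma_\beta$ on their disjoint vocabularies yields a translation sending $\alpha\land\beta$ to $\sigma_\alpha[\alpha]\land\sigma_\beta[\beta]$, provable in $\gamma$. The nontrivial point is that $\alpha\land\beta$ must itself be conservative over $\EA$; here I invoke Craig's interpolation theorem. If $\alpha\land\beta\vdash\varphi$ with $\varphi\in\LA$, then $\alpha\vdash(\beta\rightarrow\varphi)$; as the common vocabulary of the two sides is exactly $\LA$, one extracts an arithmetical interpolant $\chi$ with $\alpha\vdash\chi$ and $\chi\land\beta\vdash\varphi$. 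Conservativity of $\alpha$ over $\EA$ gives $\EA\vdash\chi$, whence $\beta\vdash\varphi$, and conservativity of $\beta$ then forces $\EA\vdash\varphi$; taking $\varphi$ to be $0=1$ also secures consistency of $\alpha\land\beta$.

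Finally, for distributivity both $\alpha\land\repsymb_\alpha(\beta\lor\gamma)$ and $(\alpha\land\repsymb_\alpha(\beta))\lor(\alpha\land\repsymb_\alpha(\gamma))$ live in the same non-arithmetic signature and are propositionally equivalent, so they are mutually definable via the identity translation; this verifies the distributive law $a\cup(b\cap c)=(a\cup b)\cap(a\cup c)$, which is equivalent in every lattice to its dual. The main obstacle I anticipate is purely clerical: tracking the several non-arithmetic signatures through each application of $\repsymb$ and verifying that every case-split translation produces a well-formed translation, i.e.\ formulas with the correct number of free variables.
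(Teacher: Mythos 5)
Your proposal is correct, and for the infimum, the supremum, and the conservativity/consistency of $\alpha\land\repsymb_{\alpha}(\beta)$ via Craig interpolation it coincides with the paper's argument almost verbatim (same case-split truth predicate and case-split translations for the infimum, same gluing of translations on disjoint vocabularies for the supremum, same interpolation step). Where you genuinely diverge is distributivity. The paper verifies \emph{both} distributive identities by hand: the law $\sup(\alpha,\inf(\beta,\gamma))=\inf(\sup(\alpha,\beta),\sup(\alpha,\gamma))$ is, as you observe, a literal propositional identity once one notes that the fixed renaming $\repsymb_{\alpha}$ commutes with $\lor$; but the dual law compares $\alpha\lor(\beta\land\repsymb_{\beta}(\gamma))$ with $(\alpha\lor\beta)\land\repsymb_{\alpha\lor\beta}(\alpha\lor\gamma)$, where the two sides carry \emph{different} renamed copies of the signatures, and the paper spends the bulk of its appendix building explicit two-way ``check-case-by-case'' translations to establish $\RLHD$ there. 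You sidestep all of that by invoking the standard lattice-theoretic fact that, in a structure already known to be a lattice, either distributive identity (holding for all triples) implies the other --- a fact available in the very reference \cite{gratz_lattice} the paper cites. Since your first two parts do establish that $\langle\cDEF,\LHD\rangle$ is a lattice before distributivity is addressed, this shortcut is legitimate and eliminates the most technical portion of the paper's proof; what the paper's longer route buys in exchange is only the explicit form of the witnessing translations, which is not needed elsewhere. The one point worth making explicit in a write-up is that $\repsymb_{\alpha}(\beta\lor\gamma)$ is \emph{syntactically} $\repsymb_{\alpha}(\beta)\lor\repsymb_{\alpha}(\gamma)$ (because $\repsymb_{\alpha}$ is a symbol-by-symbol renaming fixed independently of its argument), since the identity-translation claim for the first distributive law rests on the two sides living in literally the same signature.
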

The proof is sketched below. See Appendix B for the complete proof.
\begin{proof}[Sketch of proof]
The proof has three parts. First, we show that $\alpha\lor\beta$ and $\alpha\land\repsymb_{\alpha}(\beta)$ are syntactically conservative over $\EA$ definitions of truth (whenever $\alpha$ and $\beta$ are such). Then we show that $\alpha\lor\beta$ is an infimum and $\alpha\land\repsymb_{\alpha}(\beta)$ is a supremum of $\alpha$ and $\beta$ (the proof also shows that the operations are well defined as operations on the equivalence classes of truth definitions modulo the relation of mutual definability, i.e. if $\alpha\RLHD\alpha'$ and $\beta\RLHD\beta'$, then $\alpha\lor\beta\RLHD\alpha'\lor\beta'$ and $\alpha\land\repsymb_{\alpha}(\beta)\RLHD\alpha'\lor\repsymb_{\alpha'}(\beta')$). We complete the proof by showing that the structure satisfies distributivity axioms. Only the first part of the proof is presented in full here.

\medskip
Both $\alpha\lor\beta$ and $\alpha\land\repsymb_{\alpha}(\beta)$ extend $\EA$. Given a sentence $\sigma\in\LA$, suppose that $\alpha\lor\beta\vdash\sigma$, then in particular $\alpha\vdash\sigma$, so $\EA\vdash\sigma$ by the conservativity of $\alpha$. Now suppose that $\alpha\land\repsymb_{\alpha}(\beta)\vdash\sigma$, then
$$\emptyset\vdash\alpha\rightarrow(\repsymb_{\alpha}(\beta)\rightarrow\sigma)\text{.}$$
Since the sets of the symbols present in $\alpha$ and $\repsymb_{\alpha}(\beta)$ respectively, intersect exactly at the arithmetic symbols, by Craig's interpolation theorem (see 2.5.5 of \cite{Handbook_Prf_Th}), there exists an arithmetical sentence $\tau$ such that
$$\emptyset\vdash\alpha\rightarrow\tau\text{ \ \ and \ \ }\emptyset\vdash\tau\rightarrow(\repsymb_{\alpha}(\beta)\rightarrow\sigma)\text{,}$$
and hence $\emptyset\vdash\repsymb_{\alpha}(\beta)\rightarrow(\tau\rightarrow\sigma)$. The arithmetical consequences of $\alpha$ and $\repsymb_{\alpha}(\beta)$ are exactly the same, so $\emptyset\vdash\repsymb_{\alpha}(\beta)\rightarrow\tau$, and therefore $\repsymb_{\alpha}(\beta)\vdash\sigma$. Hence $\EA\vdash\sigma$, since $\repsymb_{\alpha}(\beta)$ is syntactically conservative over $\EA$.

\medskip
The second part of the proof is analogous to the one given in \cite{mycielski}. The "$x\cap(y\cup z)=(x\cap y)\cup(x\cap z)$" part of the third part of the proof is immediate: given definitions of truth $\alpha$, $\beta$ and $\gamma$,
$$\alpha\land\repsymb_{\alpha}(\beta\lor\gamma)=(\alpha\land\repsymb_{\alpha}(\beta))\lor(\alpha\land\repsymb_{\alpha}(\gamma))\text{.}$$
The "$x\cup(y\cap z)=(x\cup y)\cap(x\cup z)$" part is a little less straightforward, as the supremum operation involves a language change. To deal with this, we show how to build suitable "check-case-by-case" translations.
\end{proof}



\subsubsection{Main Result}\label{sec_main}


In this section, we prove that the distributive lattice determined by the definability order between definitions of truth is a countable universal distributive lattice. 

\medskip
The range of our embedding consists of weakly compositional defintions of truth -- the definition below uses truth definitions of the form $\CTo_{\varphi}$ as introduced in Definition \ref{wCDEF}.

\begin{definition}[Codomain] 
For a formula $\varphi(x)$, we define
$$\Delta:=\{\CTo_{\varphi(x)}:\ \forall n\in\omega\ \EA\vdash\varphi(\num{n})\ \land \ \EA\vdash\forall x(\varphi(x)\rightarrow\forall y<x.\varphi(y))\}.$$
\end{definition}

\begin{remark}
Each member of $\Delta$ is a syntactically conservative over $\EA$ definition of truth. Moreover, if $\CTo_{\varphi(x)}\in\Delta$, then, provably in $\EA$, $\varphi(x)$ defines an initial segment containing all natural numbers.
\end{remark}

The two lemmata below are crucial in arguing for comparability and incomparability of elements of $\Delta$ with respect to $\LHD$.

\begin{lemma}[Comparability lemma]\label{lem_comp}
For $\CTo_{\alpha}$ and $\CTo_{\beta}$ being members of $\Delta$,
$$\text{if }\EA\vdash\forall x(\alpha(x)\rightarrow\beta(x))\text{, then }\CTo_{\alpha}\LHD\CTo_{\beta}\text{.}$$ 
\end{lemma}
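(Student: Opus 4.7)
The plan is to exhibit the witnessing translation directly: take $\sigma$ to be the identity. Both $\CTo_\alpha$ and $\CTo_\beta$ are formulated in the same language $\LT = \LA \cup \{T\}$, so $\sigma$ defined by $\sigma(T) := T(v_0)$ is a legitimate $\LA$-conservative translation from $\LT$ to $\LT$ in the sense of Section~2.2. On sentences, $\sigma$ acts as the identity, so $\sigma[\CTo_\alpha] = \CTo_\alpha$, and the definability claim reduces to the syntactic implication $\CTo_\beta \vdash \CTo_\alpha$.

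Since both theories extend $\EA$, the only nontrivial axiom of $\CTo_\alpha$ to verify in $\CTo_\beta$ is $\forall x\bigl(\alpha(x)\to\CTm(x)\bigr)$. Reasoning inside $\CTo_\beta$: fix $x$ and assume $\alpha(x)$. The standing hypothesis $\EA \vdash \forall x\bigl(\alpha(x)\to\beta(x)\bigr)$ is available because $\CTo_\beta$ contains $\EA$, which yields $\beta(x)$. The compositional axiom $\forall x\bigl(\beta(x)\to\CTm(x)\bigr)$ of $\CTo_\beta$ then gives $\CTm(x)$, discharging the goal. This completes the argument.

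I expect no genuine obstacle: the content of the lemma is essentially that the assignment $\varphi\mapsto\CTo_\varphi$ is monotone with respect to $\EA$-provable implication in $\varphi$, so imposing the compositional clauses on a provably larger initial segment produces a logically stronger theory, which trivially defines the weaker one via the identity. The only point worth double-checking is that the sole reading of $\sigma[\CTo_\alpha]$ as a translation of an $\LT$-theory into itself really does fix the axiom $\forall x(\alpha(x)\to\CTm(x))$ verbatim; this follows because $\alpha$ is an $\LA$-formula and $\CTm(x)$ contains $T$ only in atomic contexts of the form $T(\cdot)$, which are replaced by themselves under $\sigma$.
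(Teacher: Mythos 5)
Your proposal is correct and matches the paper's own proof: both use the identity translation and reduce the claim to the syntactic implication $\CTo_\beta \vdash \CTo_\alpha$, which follows immediately from $\EA\vdash\forall x(\alpha(x)\rightarrow\beta(x))$. The extra care you take in checking that the identity translation fixes the axiom verbatim is sound but not needed beyond what the paper already records.
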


\begin{remark}
The antecedant can be weakened to $\exists n\in\omega\ \EA\vdash\forall x(\alpha(x+\num{n})\rightarrow\beta(x))$.
\end{remark}

\begin{proof}
If $\EA\vdash\forall x(\alpha(x)\rightarrow\beta(x))$, then
$$\EA\vdash\forall x(\beta(x)\rightarrow\CTm(x))\rightarrow\forall x(\alpha(x)\rightarrow\CTm(x))\text{.}$$
Hence $\CTo_{\beta}\vdash\CTo_{\alpha}$. Thus $\CTo_{\beta}\RHD\CTo_{\alpha}$ via the identity translation.
\end{proof}
\begin{remark}
To prove the remark, we can use a partial truth predicate built on the top of the truth predicate of~$\CTo_{\beta}$ (we omit the details).
\end{remark}

\begin{lemma}[Incomparability lemma]\label{order2lemma}
For $\CTo_{\alpha}$ and $\CTo_{\beta}$ being members of $\Delta$,
$$\text{if }\CTo_{\alpha}\LHD\CTo_{\beta}\text{, then }\CTz\vdash\forall x.\alpha(x)\rightarrow\forall x.\beta(x)\text{.}$$
\end{lemma}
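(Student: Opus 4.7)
The plan is to argue by contradiction and derive a violation of Tarski's undefinability theorem. Suppose $\CTo_{\alpha}\LHD \CTo_{\beta}$ via a translation $\sigma$ with $\sigma(T)=\theta(x)$ for some $\LT$-formula $\theta$; unwinding Definition \ref{wCDEF} and the fact that $\sigma$ is the identity on $\LA$, this gives
\[
\CTo_{\beta}\vdash \forall x\bigl(\alpha(x)\to \CTm_{\theta}(x)\bigr),
\]
where $\CTm_{\theta}(x)$ is $\CTm(x)$ with every occurrence of $T$ replaced by $\theta$. I would then assume, toward a contradiction, that $(\mathcal{M},T)\models \CTz+\forall x.\alpha(x)+\neg\forall x.\beta(x)$ and pick $d\in M$ with $\mathcal{M}\models\neg\beta(d)$. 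Since $\beta$ defines an initial segment in $\EA$, we get $\mathcal{M}\models \forall x(\beta(x)\to x<d)$.

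First I would invoke Fact \ref{fact_modeltheory_ct_0} to obtain $(\mathcal{M},T_{d})\models \CTm+\IND(\LT)$. Because $\mathcal{M}\models \forall x(\beta(x)\to x<d)$ and $\CTm$ proves $\forall x.\CTm(x)$, the partial predicate $T_{d}$ validates $\forall x(\beta(x)\to \CTm(x))$, so $(\mathcal{M},T_{d})\models \CTo_{\beta}$. The hypothesis then yields $(\mathcal{M},T_{d})\models \forall x(\alpha(x)\to \CTm_{\theta}(x))$; combining this with $\forall x.\alpha(x)$ shows that the interpretation $\theta^{T_{d}}$ of $\theta$ in $(\mathcal{M},T_{d})$ satisfies $\forall x.\CTm_{\theta}(x)$, i.e.\ $\theta^{T_{d}}$ is a compositional truth predicate for $\LA$ at every depth.

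The crucial observation is that $T_{d}$ is itself $\LA$-definable in $\mathcal{M}$ with parameter $d$. Using the $\LT$-induction granted by Fact \ref{fact_modeltheory_ct_0}, I would prove by induction on $n$ the $\LT$-statement
\[
\forall \varphi\bigl(\dpt(\varphi)\leq n\to (T(\varphi)\leftrightarrow \Tr_{n}(\varphi))\bigr);
\]
the base case follows from the atomic clause of $\CTm$, and the inductive step is the routine compositional comparison. Specialising to $n=d$ yields $T_{d}(\varphi)\leftrightarrow \bigl(\dpt(\varphi)\leq d\wedge \Tr_{d}(\varphi)\bigr)$, which is $\LA$ in the parameter $d$. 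Substituting this $\LA$-definition of $T$ throughout $\theta$ produces an $\LA$-formula $\theta^{*}(x;d)$ whose extension in $\mathcal{M}$ coincides with $\theta^{T_{d}}$. Hence $\theta^{*}$ defines in $\mathcal{M}\models \PA$ a compositional truth predicate for all $\LA$-sentences, contradicting Tarski's theorem on the undefinability of truth.

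The main obstacle I expect is the $\LA$-definability step: one has to justify that the induction equating $T_{d}$ with $\Tr_{d}$ can be carried out inside the auxiliary structure $(\mathcal{M},T_{d})$, where by Fact \ref{fact_modeltheory_ct_0} full $\LT$-induction is available, rather than in $(\mathcal{M},T)$ itself, which has only the $\Delta_{0}(\LT)$-induction of $\CTz$. A further delicate point is that $d$ may be nonstandard, so $\Tr_{d}$ must be accessed through the elementary coding of $n\mapsto \qcr{\Tr_{n}(x)}$ recalled in Section \ref{sec_arithm}, and one must be careful that the substitution producing $\theta^{*}$ really has all occurrences of $T$ replaced uniformly by an $\LA$-formula with the fixed parameter $d$.
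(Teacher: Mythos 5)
There is a genuine gap, and it sits exactly at the step you flag as ``crucial'': the claim that $T_{d}$ is $\LA$-definable in $\mathcal{M}$ with parameter $d$. This claim is not merely unproven --- it is false, and for essentially the reason you are trying to exploit. Since $\CTo_{\beta}\in\Delta$, $\beta$ holds of every standard number in $\mathcal{M}$, so the least $d$ with $\neg\beta(d)$ is nonstandard; hence $T_{d}$ decides all sentences of standard depth and, by external induction on standard syntax, satisfies every Tarski biconditional $T_d(\qcr{\varphi})\leftrightarrow\varphi$. If $T_{d}$ were the extension of an $\LA$-formula $\theta^{*}(x;d)$, that formula would already be a parametric truth definition for $\mathcal{M}\models\PA$, contradicting Tarski's theorem before $\theta$ ever enters the picture. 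The technical reason your induction cannot deliver such a $\theta^{*}$ is that for nonstandard $d$ the object $\Tr_{d}$ is only a \emph{code} of a formula, not a formula: to state ``$\Tr_{n}(\varphi)$'' uniformly in $n$ inside $(\mathcal{M},T_d)$ you must write something like $T[\Tr_{n}(\dot{\varphi})]$, so the induction at best yields $T(\varphi)\leftrightarrow T[\Tr_{d}(\dot{\varphi})]$ for $\dpt(\varphi)\leqslant d$ --- a definition of $T_{d}$ in terms of $T$ itself, not an $\LA$-definition. (Appendix A of the paper, via Wcis{\l}o's proposition, makes the same point: a truth predicate for sentences of nonstandard depth $c$ can define a full truth predicate, so no Tarski-style contradiction is available at this level.)

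Everything up to that point is sound and parallels the paper's argument: passing to $T_{d}$, getting $(\mathcal{M},T_{d})\models\CTo_{\beta}+\IND(\LT)$, and observing that $\theta^{T_{d}}$ is a full compositional truth class. But the contradiction has to come from G\"odel, not Tarski. The paper chooses the initial model to satisfy in addition $\neg\Con_{\CTz+\forall x.\alpha(x)\land\exists x.\lnot\beta(x)}$, notes that $(\mathcal{M},\theta^{T_{d}})$ inherits full $\LT$-induction from $(\mathcal{M},T_{d})$ and hence models $\CT$, and then uses partial $\LT$-truth predicates together with $\PA$-provable cut elimination to show that $\CT$ proves the consistency of any true finitely axiomatized theory --- forcing $\mathcal{M}\models\Con_{\CTz+\forall x.\alpha(x)\land\exists x.\lnot\beta(x)}$ and contradicting the choice of model. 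If you replace your final $\LA$-definability step with this reflection argument, the rest of your proof goes through.
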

\begin{proof}
For a contradiction, suppose that $\CTo_{\beta}\RHD\CTo_{\alpha}$ and that $\CTz+\forall x.\alpha(x)\land\exists x.\lnot\beta(x)$ is consistent. In particular, by the second G\"odel's incompleteness theorem, there is a model 
$$\langle\mathcal{M},T\rangle\models\CTz+\forall x.\alpha(x)\land\exists x.\lnot\beta(x)+\lnot\Con_{\CTz+\forall x.\alpha(x)\land\exists x.\lnot\beta(x)}\text{.}$$
Let $c\in\mathcal{M}$ be the smallest element such that $\mathcal{M}\models\lnot\beta(c)$ -- such an element exists since $\CTz\vdash\PA$ and $\beta$ is an arithmetical formula. Let $T_{c}$ be the set $\{\varphi\in\mathcal{M}:\ \langle\mathcal{M},T\rangle\models T(\varphi)\land\dpt(\varphi)\leqslant c\}$. Then $\langle\mathcal{M},T_{c}\rangle\models\CTo_{\beta}$ and simultaneously $\langle\mathcal{M},T_{c}\rangle$ satisfies the induction scheme for formulae of $\LT$ by Fact \ref{fact_modeltheory_ct_0}.

\medskip
We assumed that $\CTo_{\beta}\RHD\CTo_{\alpha}$. This means that there exists $\langle\mathcal{M},T_{c}\rangle$-definable set $T'$ such that $\langle\mathcal{M},T'\rangle\models\CTo_{\alpha}$. Since $T'$ is definable in $\langle \mathcal{M}, T_c\rangle$ and $\langle\mathcal{M},T_{c}\rangle$ satisfies the induction axioms for all formulae of $\LT$, then also $\langle \mathcal{M}, T'\rangle$ satisfies all induction axioms for formulae of $\LT$. Hence $\langle\mathcal{M},T'\rangle\models\CT$, because $\mathcal{M}\models\forall x.\alpha(x)$.

\medskip
For a natural number $n$, let $\Tr^{\LT}_n$ denote the partial truth predicate for $\LT$-sentences of logical depth at most $n$, for which $\CT$ proves the Tarski compositional clauses. Let us observe that the cut elemination theorem is provable in Peano Arithmetic (see \cite{hapu98}, ch. V, sec. 5(c), thm~5.17), so, by induction on the lengths of proofs, for each natural number $n$ and $\LT$-sentences $\sigma$ and $\tau$ of syntactical depth not greater than $n$, $\CT$ proves $\Tr^{\LT}_{n}(\ulcorner\sigma\urcorner)\land\Prov_{\sigma}(\ulcorner\tau\urcorner)\rightarrow\Tr^{\LT}_{n}(\ulcorner\tau\urcorner)$. In particular, $\CT$ proves
$$\Tr^{\LT}_{n}(\ulcorner\CTz+\forall x.\alpha(x)\land\exists x.\lnot\beta(x)\urcorner)\land\text{Pr}_{\CTz+\forall x.\alpha(x)\land\exists x.\lnot\beta(x)}(\qcr{0=1})\rightarrow\Tr^{\LT}_{n}(\qcr{0=1})\text{.}$$

\medskip
Because $\langle\mathcal{M},T'\rangle$ is a model of $\CTz+\forall x.\alpha(x)\land\exists x.\lnot\beta(x)$ and simultaneously it satisfies $\CT$, it follows from the above considerations that $\langle\mathcal{M},T'\rangle\models\Con_{\CTz+\forall x.\alpha(x)\land\exists x.\lnot\beta(x)}$. But this means that $\mathcal{M}\models\Con_{\CTz+\forall x.\alpha(x)\land\exists x.\lnot\beta(x)}$ and therefore $\langle\mathcal{M},T\rangle\models\Con_{\CTz+\forall x.\alpha(x)\land\exists x.\lnot\beta(x)}$. This contradicts our initial assumption.
\end{proof}

\begin{remark}
    Ideally, we would like to collapse the comparability and incomparability lemmata into a single lemma characterising the definability relation between theories of the form $\CTo_{\alpha}$ in terms of provability in a fixed theory. At this point we see no natural way of doing this. Various obstacles and possible ways to overcome them are discussed in Appendix A.\footnote{We thank the anonymous referee for suggestions that led to this discussion.}
\end{remark}

\begin{lemma}[Infima \& Suprema]\label{infsuplemma}
For $\CTo_{\alpha}$ and $\CTo_{\beta}$ being members of $\Delta$, $\CTo_{\alpha\land\beta}$ and $\CTo_{\alpha\lor\beta}$ are members of $\Delta$. In the order generated by $\langle\Delta,\LHD\rangle$,
$$\inf\{\CTo_{\alpha},\CTo_{\beta}\}=\CTo_{\alpha\land\beta}\ \ \text{and}\ \ \sup\{\CTo_{\alpha},\CTo_{\beta}\}=\CTo_{\alpha\lor\beta}\text{.}$$
Moreover, these operations are the same in the definability order $\langle \cDEF,\LHD\rangle$.
\end{lemma}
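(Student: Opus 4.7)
My plan is to split the argument into three phases: first verify that $\CTo_{\alpha\land\beta}$ and $\CTo_{\alpha\lor\beta}$ lie in $\Delta$; next show, via the Comparability Lemma, that they are respectively a lower and an upper bound of $\{\CTo_\alpha,\CTo_\beta\}$; and finally establish their extremality by reducing to the lattice structure of $\cDEF$ proved in the first theorem of Section~3.2.1. Membership should be routine, since the two defining clauses of $\Delta$ — $\EA$-provability of $\varphi(\num n)$ for every $n$ and the initial-segment condition on $\varphi$ — are clearly preserved by $\land$ and $\lor$, the disjunction case needing only a small case split. The bounds then follow from Comparability applied to the $\EA$-provable implications $(\alpha\land\beta)(x)\to\alpha(x)$, $\alpha(x)\to(\alpha\lor\beta)(x)$ and their symmetric counterparts.

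The substantive step will be to prove the two mutual-definability equivalences
$$\CTo_{\alpha\land\beta}\RLHD\CTo_\alpha\lor\CTo_\beta\quad\text{and}\quad\CTo_{\alpha\lor\beta}\RLHD\CTo_\alpha\land\repsymb_\alpha(\CTo_\beta),$$
which will immediately upgrade $\CTo_{\alpha\land\beta}$ and $\CTo_{\alpha\lor\beta}$ to the $\cDEF$-infimum and supremum (and hence to the $\Delta$-ones, since $\Delta\subseteq\cDEF$). The pivotal ingredient I plan to use is a linearity-of-cuts observation, provable inside $\EA$: whenever $\alpha,\beta$ both define initial segments of $<$,
$$\EA\vdash\forall x(\alpha(x)\to\beta(x))\lor\forall x(\beta(x)\to\alpha(x)).$$
A short argument gives this: if $\alpha(y_0)\land\lnot\beta(y_0)$ holds for some $y_0$, then any $z$ with $\beta(z)$ is forced to satisfy $z\leqslant y_0$ (otherwise $\beta$-initiality yields $\beta(y_0)$), after which $\alpha$-initiality of $y_0$ yields $\alpha(z)$.

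For the infimum equivalence, the identity translation will work in both directions: $\CTo_\alpha$ and $\CTo_\beta$ each prove $\CTo_{\alpha\land\beta}$ because compositionality on a supersegment trivially restricts to the sub-segment, while $\CTo_{\alpha\land\beta}\vdash\CTo_\alpha\lor\CTo_\beta$ because the linearity observation collapses $\alpha\land\beta$ to whichever of $\alpha,\beta$ defines the smaller cut. For the supremum, the direction $\CTo_\alpha\land\repsymb_\alpha(\CTo_\beta)\LHD\CTo_{\alpha\lor\beta}$ is immediate under $T\mapsto T$, $T'\mapsto T$; for the reverse I will use the translation
$$\Theta(x):=\bigl(\forall y(\alpha(y)\to\beta(y))\land T'(x)\bigr)\lor\bigl(\exists y(\alpha(y)\land\lnot\beta(y))\land T(x)\bigr),$$
which in every model collapses to either $T'$ or $T$ according to whichever cut is larger, and therefore inherits the compositional clauses on the $\alpha\lor\beta$-cut. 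The only step requiring genuine care, although short, will be the linearity-of-cuts observation in $\EA$; after that, everything reduces to transparent consequences of the Comparability Lemma and the previously established lattice theorem for $\cDEF$.
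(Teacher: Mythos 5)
Your proposal is correct and follows essentially the same route as the paper: the $\EA$-provable linearity of the cuts defined by $\alpha$ and $\beta$ is exactly the observation the authors use to get $\CTo_{\alpha}\lor\CTo_{\beta}\leftrightarrow\CTo_{\alpha\land\beta}$ and $\CTo_{\alpha}\land\CTo_{\beta}\leftrightarrow\CTo_{\alpha\lor\beta}$, and your case-splitting translation for the supremum is the same ``choose the truth predicate of the longer cut'' formula that appears in their proof.
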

\begin{proof}
Both $\alpha$ and $\beta$ are members of $\Delta$, so they determine cuts in any model of $\EA$. Hence $\EA\vdash\big(\forall x(\alpha(x)\rightarrow\CTm(x))\lor\forall x(\beta(x)\rightarrow\CTm(x))\big)\leftrightarrow\forall x(\alpha(x)\land\beta(x)\rightarrow\CTm(x))$.
and $\EA\vdash\big(\forall x(\alpha(x)\rightarrow\CTm(x))\land\forall x(\beta(x)\rightarrow\CTm(x))\big)\leftrightarrow\forall x(\alpha(x)\lor\beta(x)\rightarrow\CTm(x))$.
In~consequence, $\CTo_{\alpha}\land\CTo_{\beta}\leftrightarrow\CTo_{\alpha\lor\beta}$ and $\CTo_{\alpha}\lor\CTo_{\beta}\leftrightarrow\CTo_{\alpha\land\beta}$. To prove the moreover part, it is enough to observe that $\CTo_{\alpha}\wedge \repsymb_{\CTo\alpha}(\CTo_{\beta})\RLHD\CTo_{\alpha}\land\CTo_{\beta}$. The definability $\CTo_{\alpha}\land\CTo_{\beta}\RHD \CTo_{\alpha}\wedge \repsymb_{\CTo\alpha}(\CTo_{\beta})$ is straightforward, and the converse direction is witnessed by a disjunctive translation which first measures which one of $\alpha,\beta$ defines longer initial segment and then chooses the right truth predicate for the interpretation. Formally, the interpretation of the predicate is the following formula:
\[\bigl(\alpha\subseteq \beta \rightarrow \repsymb_{\CTo\alpha}(T)(x)\bigr)\land \bigl(\beta\subsetneq \alpha\rightarrow T(x)\bigr).\]
In the above, $\alpha\subseteq\beta$ abbreviates $\forall x(\alpha(x)\rightarrow \beta(x))$ and the abbreviation $\beta\subsetneq \alpha$ is unravelled accordingly.
\end{proof}

\begin{definition}[Domain of the embedding]
We denote by $\mathbb{B}=\langle B,\mathbb{0},\mathbb{1},\cap,\cup,\lnot\rangle$ a fixed countable atomless Boolean algebra such that its domain and all symbols can be represented in $\N$ by elementary formulae such that $\EA$ proves that $\mathbb{B}$ is the countable atomless Boolean algebra. We denote by the above symbols also the formulae representing~the algebra. We let $\leqslant_{\mathbb{B}}$ be the partial order determined by $\cap$.  An example of such a Boolean algebra is given in Section \ref{sect_algebra}.
\end{definition}

\begin{definition}[Embedding]
We define a function $F:B\longrightarrow\Delta$. Let $\eta(y)$ be a $\Sigma_{1}$-formula \mbox{$\Sigma_{1}$-flexible} over arithmetical consequences of $\CTz+\neg\Con_{\CTz}$ (see Theorem \ref{tw_flex}).  For $a\in B$, we define $\varphi_{a}(x)$ as $\Con_{\CTz}(x)\ \lor\ $"$\{y:\ \eta(y)\}$ is an ultrafilter on~$\mathbb{B}$ containing~$\num{a}$" and put $F(a)=\CTo_{\varphi_{a}}$. By "$\{y:\ \eta(y)\}$ is an ultrafilter on~$\mathbb{B}$ containing~$\num{a}$" we mean the conjunction of the following sentences (cf. \ref{fact_ultrafilters}):
\begin{itemize}
    \item $\forall y(\eta(y)\rightarrow y\in B)$,
    \item $\eta(\mathbb{1})\land\lnot\eta(\mathbb{0})\land\eta(\num{a})$,
    \item $\forall y,z\in B(\eta(y)\land\eta(z)\rightarrow\eta(y\cap z))$,
    \item $\forall y,z\in B(\eta(y)\land y\leqslant_{\mathbb{B}} z\rightarrow\eta(z))$,
    \item $\forall y\in B(\eta(y)\lor\eta(\lnot y))$.
\end{itemize}
\end{definition}

\begin{remark}
Since for each $n\in\omega$, $\Con_{\CT_0}(\num{n})$ is a true $\Delta_0$-sentence, then for each $a\in B$ and  $n\in\omega$, $\EA\vdash\varphi_{a}(\num{n})$. Downward closure of each $\varphi_a(x)$ is immediate. In~consequence for each $a\in B$, $\CTo_{\varphi_{a}}$ is a member of $\Delta$.
\end{remark}

\begin{theorem}[Main Theorem]\label{tw_main}
The above defined function $F$ is a lattice embedding. Therefore the definability order $\langle\cDEF,\LHD\rangle$ is a~countable universal distributive lattice.
\end{theorem}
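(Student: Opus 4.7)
The plan is to show that $F$ is a lattice embedding—both a homomorphism of the operations and an order-reflector—after which the claim that $\langle\cDEF,\LHD\rangle$ is a countable universal distributive lattice follows immediately from the already-cited fact that the lattice reduct of the countable atomless Boolean algebra is a countable universal distributive lattice, together with countability of $\cDEF$ and the distributivity already established in Section 3.2.1.

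For the homomorphism step, I would reduce everything to propositional manipulations on the defining formulas. Denoting ``$\{y:\eta(y)\}$ is an ultrafilter on $\mathbb{B}$ containing $\num{c}$'' by $\mathrm{UF}_c$, the ultrafilter axioms give, provably in $\EA$, the identities $\mathrm{UF}_{a\cap b}\leftrightarrow \mathrm{UF}_a\wedge\mathrm{UF}_b$ (closure under meets) and $\mathrm{UF}_{a\cup b}\leftrightarrow \mathrm{UF}_a\vee\mathrm{UF}_b$ (primality). Distributing these over the $x$-free disjunct $\Con_{\CTz}(x)$ yields $\EA\vdash \varphi_{a\cap b}\leftrightarrow \varphi_a\wedge\varphi_b$ and $\EA\vdash \varphi_{a\cup b}\leftrightarrow \varphi_a\vee\varphi_b$, and the Infima \& Suprema Lemma then converts these into $F(a\cap b)\RLHD\inf\{F(a),F(b)\}$ and $F(a\cup b)\RLHD\sup\{F(a),F(b)\}$ in $\langle\cDEF,\LHD\rangle$.

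For order reflection, suppose $F(a)\LHD F(b)$. The Incomparability Lemma gives $\CTz\vdash \forall x\varphi_a(x)\to\forall x\varphi_b(x)$; since $\mathrm{UF}_c$ is $x$-free, pulling the universal quantifier past the disjunction reduces this to $\CTz+\neg\Con_{\CTz}\vdash \mathrm{UF}_a\to\mathrm{UF}_b$. The key remaining lemma is: whenever $a\not\leq_{\mathbb{B}} b$, the theory $\CTz+\neg\Con_{\CTz}+\mathrm{UF}_a+\neg\mathrm{UF}_b$ is consistent. Since $\mathbb{B}$ is the standard realization of the Lindenbaum algebra of propositional logic and $a\cap\neg b\neq \mathbb{0}$, the formula $a\wedge\neg b$ is propositionally satisfiable by some valuation $v$ of finite support; the set $U_v$ of canonical representatives in $B$ of propositional formulas satisfied by $v$ is then a $\Delta_0$-definable ultrafilter on $\mathbb{B}$ containing $a$ and not $b$, with the ultrafilter axioms $\EA$-provable. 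Letting $\psi(y)$ be this $\Delta_0$ (hence $\Sigma_1$) definition, $\Sigma_1$-flexibility of $\eta$ over the arithmetical consequences of $\CTz+\neg\Con_{\CTz}$ yields an $\LA$-model $\mathcal{N}$ of those consequences with $\mathcal{N}\models\forall y(\eta(y)\leftrightarrow\psi(y))$, in which therefore $\mathrm{UF}_a\wedge\neg\mathrm{UF}_b$ holds. A standard $\LA$-elementary-diagram expansion then upgrades $\mathcal{N}$ (or an elementary extension) to a model of the full $\LT$-theory $\CTz+\neg\Con_{\CTz}+\mathrm{UF}_a+\neg\mathrm{UF}_b$, establishing consistency and hence $a\leq_{\mathbb{B}} b$.

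The main delicate point is the $\EA$-verification that $U_v$ is genuinely an ultrafilter on $\mathbb{B}$ under the paper's specific representation of $\mathbb{B}$ by least $\sim$-representatives; one must check that $\psi$ picks only canonical representatives and that the ultrafilter axioms (phrased via the canonical operations on equivalence classes) reduce to elementary facts about propositional evaluation under a finitely supported $v$. Granting this, combining the two steps gives that $F$ is a lattice embedding of $\langle B,\cap,\cup\rangle$ into $\langle\cDEF,\LHD\rangle$, and the universality conclusion follows at once.
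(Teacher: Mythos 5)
Your proposal follows the paper's own strategy almost exactly: both arguments reduce the homomorphism property to the $\EA$-provable equivalences $\varphi_{a\cap b}\leftrightarrow\varphi_a\wedge\varphi_b$ and $\varphi_{a\cup b}\leftrightarrow\varphi_a\vee\varphi_b$ via the ultrafilter characterization (Fact \ref{fact_ultrafilters}) and Lemma \ref{infsuplemma}, and both obtain order reflection from Lemma \ref{order2lemma} together with the $\Sigma_1$-flexibility of $\eta$ applied to an explicitly $\Sigma_1$-definable ultrafilter containing $a\cap\lnot b$. The one genuine difference is how that separating ultrafilter is produced: you exploit the concrete presentation of $\mathbb{B}$ as the propositional Lindenbaum algebra and take the formulas true under a finitely supported valuation satisfying $a\wedge\lnot b$, whereas the paper writes down a greedy sequence construction ($s_{z+1}=s_z\cap z$ if this is nonzero, else $s_z\cap\lnot z$) whose correctness is verified by induction in $\PA$ (available since $\CTz\vdash\PA$) and which works for \emph{any} elementarily presented atomless algebra, not just the Lindenbaum example. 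Your version is arguably more elementary but ties the argument to one realization of $\mathbb{B}$; the verification you flag (that $U_v$ is $\EA$-provably an ultrafilter under the least-representative coding) is real but no harder than the inductive verification the paper itself omits.

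One step of your write-up would fail as stated, though it is fortunately superfluous: you cannot in general ``upgrade'' a model $\mathcal{N}$ of the arithmetical consequences of $\CTz+\lnot\Con_{\CTz}$ to a model of the full $\LT$-theory $\CTz+\lnot\Con_{\CTz}$. By Lachlan's theorem any nonstandard model carrying a full truth predicate is recursively saturated, and the arithmetical consequences of $\CTz$ have plenty of models that are not recursively saturated, so semantic conservativity fails here and no elementary-diagram argument can supply the missing predicate. But you do not need this expansion: the implication $\mathrm{UF}_a\rightarrow\mathrm{UF}_b$ extracted from Lemma \ref{order2lemma} is itself an arithmetical sentence, hence already holds in $\mathcal{N}$, and it directly contradicts $\mathcal{N}\models\mathrm{UF}_a\wedge\lnot\mathrm{UF}_b$. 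Deleting the expansion step leaves a correct proof, essentially the paper's.
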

\begin{proof}
The second part of the theorem follows by the first part and Lemma \ref{infsuplemma}. To prove that $F$ is a lattice embedding, we show that $F$ is injective and that it preservers operations of taking infimum and supremum.

\medskip
For the former, assume the contrary and let $a$ and $b$ be elements of $B$ s.t. $a\nleqslant_{\mathbb{B}} b$ but $\CTo_{\varphi_{a}}\LHD\CTo_{\varphi_{b}}$. By Lemma~\ref{order2lemma}, $\CTz\vdash\forall x.\varphi_{a}(x)\rightarrow\forall x.\varphi_{b}(x)$. Hence, by definitions of $\varphi_{a}$~and~$\varphi_{b}$, $\CTz+\neg\Con_{\CTz}$ proves
$$\text{"}\{y:\ \eta(y)\}\text{ is an ultrafilter containing }\num{a}\text{"}\rightarrow\text{"}\{y:\ \eta(y)\}\text{ is an ultrafilter containing }\num{b}\text{".}$$
By the fliexibility of $\eta(y)$ and properties of Boolean algebras (provable in $\EA$), to obtain a contradiction, it is enough to show that there is an arithmetical $\Sigma_{1}$-formula~$\mu(y)$~s.t.
$$\PA\vdash\text{"}\{y:\ \mu(y)\}\text{ is an ultrafilter containing }\num{a\cap\lnot b}\text{"}$$
(notice that $a\cap\lnot b\neq\mathbb{0}$ as $a\nleqslant_{\mathbb{B}} b$). To this end, we use a formula which describes an explicit process of computing an ultrafilter on $\mathbb{B}$ which contains $a\cap\lnot b$.  Let $\nu(s, y)$ be the conjunction of the following formulae ($s_z$ denotes the $z$-th element of the sequence $s$):
\begin{itemize}
    \item $y\in B$,
    \item $s_{0}=\num{a\cap\lnot b}$,
    \item $s_{y+1}\cap y\neq\mathbb{0}$,
    \item $\forall z\leqslant y(z\in B\land s_{z}\cap z\neq\mathbb{0}\rightarrow s_{z+1}=s_{z}\cap z)$,
    \item $\forall z\leqslant y(z\in B\land s_{z}\cap z=\mathbb{0}\rightarrow s_{z+1}=s_{z}\cap \lnot z)$,
    \item $\forall z\leqslant y(z\notin B\rightarrow s_{z+1}=s_{z})$.
\end{itemize}
We put: $\mu(y):= \exists s. \nu(s,y)$. In $\PA$, the above is equivalent to a $\Sigma_{1}$-formula by the assumption that $\mathbb{B}$ has an elementary definition. Because $\CTz\vdash \PA$ (see \ref{fact_modeltheory_ct_0}), we can use induction to check that $\CTz$ proves each of the following (the details are omitted):
\begin{itemize}
    \item $\mu(\mathbb{1})\land\lnot\mu(\mathbb{0})$,
    \item $\forall y,z\in B(\mu(y)\land\mu(z)\rightarrow\mu(y\cap z))$,
    \item $\forall y,z\in B(\mu(y)\land y\leqslant_{\mathbb{B}} z\rightarrow\mu(z))$,
    \item $\forall y\in B(\mu(y)\lor\mu(\lnot y))$,
    \item $\mu(\num{a})\land\lnot\mu(\num{b})$.
\end{itemize}

\medskip
It remains to prove that $F$ preserves operations of taking infimum and supremum. Let $a$ and $b$ be elements of $B$. Then $\inf\{F(a),F(b)\}=\inf\{\CTo_{\varphi_{a}},\CTo_{\varphi_{b}}\}=\CTo_{\varphi_{a}\land\varphi_{b}}$, by definition of $F$ and Lemma \ref{infsuplemma}. Analogously, $\sup\{F(a),F(b)\}=\CTo_{\varphi_{a}\lor\varphi_{b}}$. Hence it is enough to show that $\EA\vdash\forall x(\varphi_{a}(x)\land\varphi_{b}(x)\leftrightarrow\varphi_{a\cap b}(x))$ and $\EA\vdash\forall x(\varphi_{a}(x)\lor\varphi_{b}(x)\leftrightarrow\varphi_{a\cup b}(x))$. By definitions of $\varphi_{a}$ and $\varphi_{b}$, it suffices to check that $\EA$ proves
$$\text{"}\{y:\ \eta(y)\}\text{ is an ultrafilter"}\rightarrow\big((\eta(\num{a})\land\eta(\num{b})\leftrightarrow\eta(\num{a\cap b}))\land(\eta(\num{a})\lor\eta(\num{b})\leftrightarrow\eta(\num{a\cup b}))\big)\text{.}$$
This is true because $\mathbb{B}$ is elementarily presented, so the equivalence between the second and third points in Fact \ref{fact_ultrafilters} for $\mathcal{F}=\eta$ can be proved in $\EA$.

\end{proof}


\section{Other Structural Properties}


Let us begin by introducing a definition of truth that is essentially different from any of the theories $\CTo_{\varphi}$ used in the construction of our embedding.

\begin{definition}
    Let $\Cut$ be the conjunction of $\EA$ and the following $\LT$-sentence:
    \[\forall x \bigl(\forall y<x. \CTm(y)\rightarrow \CTm(x)\bigr).\]
\end{definition}

\begin{remark}
    $\Cut$ is a definition of truth and $\CTm\vdash \Cut$. In particular $\Cut$ is conservative over $\EA$. In \cite{wcislyk_positive} it is shown that $\Cut$ is mutually definable with a theory WPT$^-$, which is a compositional theory of truth which compositional axioms are modelled after the Weak Kleene Logic.
\end{remark}
 
The following proposition witnesses that $\Cut$ is semantically conservative over $\EA$.

\begin{proposition}\label{prop_sem_com_cut}
    For every model $\mathcal{M}\models \EA$ there exists $T\subseteq M$ such that $\langle \mathcal{M}, T\rangle \models \Cut$. 
\end{proposition}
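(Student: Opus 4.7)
The plan is to construct, externally, a subset $T\subseteq M$ for which $\langle \mathcal{M},T\rangle \models \CTm(\num n)$ at every standard $n\in\omega$ but $\langle \mathcal{M},T\rangle \not\models \CTm(c)$ at every nonstandard $c\in M$. When $\mathcal{M}$ is nonstandard the standard cut $\omega\subseteq M$ has no largest element and its complement has no least one, so the defining sentence $\forall x\bigl(\forall y<x.\CTm(y)\rightarrow \CTm(x)\bigr)$ of $\Cut$ will hold at every $x$: for standard $x$ because the consequent holds, and for nonstandard $x$ vacuously, using the still-nonstandard $y=x-1$. When $\mathcal{M}$ is standard, the construction simply produces full compositional truth and $\Cut$ is immediate.

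Concretely, I would put
\[T := \bigl\{\varphi\in M : \exists n\in\omega,\ \mathcal{M}\models \Tr_n(\varphi)\bigr\},\]
using the standard partial truth predicates $\Tr_n$ from Section~\ref{sec_arithm}. If $\varphi\in M$ has standard depth $n$, then $T(\varphi)$ reduces to $\mathcal{M}\models \Tr_n(\varphi)$ and is thus the ``correct'' compositional evaluation with $\val$ at atoms; if $\varphi$ has nonstandard depth, then no standard $\Tr_n$ fires on $\varphi$ (every $\Tr_n$ is provably false on inputs of depth $>n$), and hence $T(\varphi)$ is false. The verification $\langle\mathcal{M},T\rangle\models\CTm(\num n)$ is then immediate from the $\EA$-theorem that $\Tr_n$ satisfies Tarski's compositional clauses for depth-$\leq n$ formulas, together with the coincidence of $T$ with $\Tr_n$ on that range.

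The failure of $\CTm(c)$ at nonstandard $c$ is witnessed by any $\mathcal{M}$-sentence $\psi$ of depth $c-1$; concretely, the $c$-fold conjunction $\num{0}=\num{0}\wedge\cdots\wedge\num{0}=\num{0}$ is $\EA$-provably constructible and has depth $c-1$. Since both $\psi$ and $\neg\psi$ have nonstandard depth, $T(\psi)=T[\neg\psi]=\mathrm{false}$, while the negation clause of $\CTm(c)$ demands $T[\neg\psi]\leftrightarrow \neg T(\psi)$. The biconditional is violated, so $\CTm(c)$ fails, and the case split indicated above yields $\langle\mathcal{M},T\rangle\models \Cut$.

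The main point requiring care is the interplay between external ingredients---the cut $\omega\subseteq M$ used in the very definition of $T$---and internal verifications in $\langle\mathcal{M},T\rangle$. This is harmless because ``$\varphi\in T$'' is a definite truth or falsity in the expansion for every individual $\varphi\in M$, regardless of whether $T$ is $\mathcal{M}$-definable; producing the nonstandard-depth counter-witness $\psi$ for every nonstandard $c$ is then just an arithmetical construction inside $\mathcal{M}$ supplied by $\EA$.
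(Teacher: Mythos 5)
Your construction is exactly the one in the paper's (sketched) proof --- the same set $T = \{\varphi\in M : \exists n\in\omega\ \mathcal{M}\models \Tr_n(\varphi)\}$ --- and your verification that each $\CTm(\num{n})$ holds while $\CTm(c)$ fails at nonstandard $c$ (via the negation clause on a sentence of nonstandard depth) correctly fills in the details the paper omits. This is essentially the same approach, carried out correctly.
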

\begin{proof}[Sketch of proof]
    Fix $\mathcal{M}$. We put $T = \{\varphi\in M: \exists n\in\omega\ \ \mathcal{M}\models \Tr_n(\varphi)\}.$
\end{proof}

For the notion of recursive saturation, consult \cite{kaye}.

\begin{proposition}\label{prop_not_se_con_restr}
     If $\mathcal{M}\models \PA$ is such that for some $\CTo_{\varphi}\in\Delta$ and some $T\subseteq M$, $\langle \mathcal{M}, T\rangle \models \CTo_{\varphi}$, then $\mathcal{M}$ is recursively saturated.
\end{proposition}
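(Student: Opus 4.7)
The plan is to use the hypothesis to extract from $T$ a partial compositional satisfaction class over $\mathcal{M}$ of nonstandard depth, and then to appeal to a Lachlan-style argument that such a partial satisfaction class forces recursive saturation.

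First, assuming $\mathcal{M}$ is nonstandard (the standard case being degenerate), I would extract a nonstandard depth bound. Because $\CTo_\varphi\in\Delta$, we have $\EA\vdash\varphi(\num n)$ for every $n\in\omega$ and $\EA$ proves that $\varphi$ defines a downward closed set; consequently $\mathcal{M}\models\varphi(\num n)$ for every $n\in\omega$, and by $\LA$-overspill in $\mathcal{M}\models\PA$ applied to the formula $\varphi$ there is a nonstandard $c\in M$ with $\mathcal{M}\models\varphi(c)$. Hence $\langle\mathcal{M},T\rangle\models\CTm(c)$, so $T$ restricted to $\mathcal{M}$-coded $\LA$-formulas of depth at most $c$ provides a partial compositional satisfaction class for $\mathcal{M}$.

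Second, given a recursive $1$-type $p(v)=\{\psi_i(v,\bar a):i\in\omega\}$ finitely realized in $\mathcal{M}$, I would exploit that the enumeration $i\mapsto\qcr{\psi_i(v,\bar a)}$ is $\Sigma_1$-representable and hence extends to an $\LA$-definable function on $M$, from which we construct the $\LA$-definable sentence-valued function $n\mapsto\Sigma(n):=\qcr{\exists v\bigwedge_{i<n}\psi_i(v,\bar a)}$, whose depth grows recursively in $n$. For each $n\in\omega$, $\Sigma(n)$ has standard depth and is true in $\mathcal{M}$ by finite realization; external induction on depth using the compositional clauses then yields $(\mathcal{M},T)\models T(\Sigma(n))$ for all $n\in\omega$.

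The heart of the proof is the overspill step: to produce a nonstandard $n_0\in M$ with $\dpt(\Sigma(n_0))\leqslant c$ and $(\mathcal{M},T)\models T(\Sigma(n_0))$. Once such $n_0$ is secured, the compositional clauses for $\exists$ and $\wedge$ applied to $\Sigma(n_0)$ produce $b\in M$ with $T(\psi_i(\dot b,\bar a))$ for every $i<n_0$; the biconditional for each standard-depth $\psi_i(v,\bar a)$ then forces $\mathcal{M}\models\psi_i(b,\bar a)$ for every $i\in\omega$, so $b$ realizes $p$ and $\mathcal{M}$ is recursively saturated. The main obstacle is that $\CTo_\varphi$ does not include $\IND(\LT)$, so we cannot directly overspill the $\LT$-formula $T(\Sigma(n))\wedge\dpt(\Sigma(n))\leqslant c$. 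I would circumvent this by transferring the statement into $\LA$: on formulas of depth $\leqslant c$ the compositional clauses determine $T$ essentially uniquely, so $T$ agrees with the internal partial truth predicates $\Tr_k$ introduced in Section~\ref{sec_arithm} on their respective domains, reducing the required overspill to the $\LA$-formula $\Tr_{\dpt(\Sigma(n))}(\Sigma(n))$, which is available via $\PA$-overspill. Establishing the agreement of $T$ with $\Tr_k$ without $\LT$-induction is the delicate point; it is this issue that would consume most of the technical work in a full proof, and which makes the contrast with Proposition~\ref{prop_sem_com_cut} substantive.
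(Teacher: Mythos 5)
Your opening step coincides with the paper's proof: overspill on $\varphi$ in $\mathcal{M}\models\PA$ gives a nonstandard $c$ with $\langle\mathcal{M},T\rangle\models\CTm(c)$, so $T$ is (codes) a partial satisfaction class of nonstandard depth. At that point the paper simply cites Lachlan's theorem (a nonstandard model of $\PA$ carrying a partial nonstandard satisfaction class is recursively saturated) and stops. You instead try to reprove the Lachlan-type statement, and your argument for it has a genuine gap --- located exactly at the step you yourself flag as delicate, and the repair you propose does not work.

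Two things go wrong. First, without $\IND(\LT)$ the compositional clauses do \emph{not} determine $T$ uniquely on formulae of nonstandard depth: showing that any two compositional truth classes agree on formulae of depth $k$ is an induction on $k$ in the expanded language, and for nonstandard $k\leqslant c$ this is precisely what is unavailable. Partial truth classes of nonstandard depth can behave very non-canonically on nonstandard formulae; Appendix A of this very paper exploits this (via Smith's theorem) to build a depth-$c$ partial truth class that defines a full truth predicate. So the asserted agreement of $T$ with $\Tr_k$ at nonstandard $k$ is unjustified and in general false. Second, even granting agreement, ``$\Tr_{\dpt(\Sigma(n))}(\Sigma(n))$'' is not a single $\LA$-formula with free variable $n$: the quantifier complexity of $\Tr_k$ grows unboundedly with $k$, so there is no arithmetical formula expressing this uniformly in $n$, and $\PA$-overspill cannot be applied to it (rewriting it as $T[\Tr_{\dpt(\Sigma(n))}(x)(\dot{\Sigma(n)})]$ puts you back in $\LT$). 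Note that once a nonstandard $n_0$ with $T(\Sigma(n_0))$ were secured, your decoding of a realizer would be fine, since peeling off conjuncts and evaluating the standard formulae $\psi_i$ uses only externally finitely many applications of compositionality; the whole difficulty is the overspill itself, which Lachlan's proof circumvents by a genuinely different device. Finally, the restriction to nonstandard $\mathcal{M}$ is not merely a degenerate case to be waved off: $\langle\N,\Th(\N)\rangle\models\CTo_{\varphi}$ for every $\CTo_{\varphi}\in\Delta$ while $\N$ is not recursively saturated, so the proposition must be read with ``nonstandard'' added (as the appeal to Lachlan's theorem presupposes).
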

\begin{proof}
    Assume $\mathcal{M}\models \PA$ and fix $\varphi$ such that $\CTo_{\varphi}\in\Delta$. By the overspill principle (see \cite{kaye}) it follows that there is a nonstandard $a\in M$ such that $\mathcal{M}\models \forall x<a.\varphi(x).$ Assume that for some $T\subseteq M$, $\langle\mathcal{M}, T\rangle\models \CTo_{\varphi}$. In particular $\langle\mathcal{M},T\rangle\models \CTm(a)$, hence $T$ is a partial nonstandard satisfaction class on $\mathcal{M}$.\footnote{More precisely $T$ encodes in a canonical way a partial nonstandard satisfaction class.} Hence $\mathcal{M}$ must be recursively saturated by Lachlan's theorem (see \cite{kaye}).
\end{proof}

\medskip
The first part of the next proposition follows directly from the observation that if $A$ is semantically conservative over $B$ and $A$ defines a theory $C$, then $C$ must also be semantically conservative over $B$ as well (we include a proof below for the sake of completeness; for an overview of various reducibility notions between theories of truth, see \cite{comparing}).

\begin{proposition}
    In $\langle \DEF, \LHD\rangle$:
    
    \begin{enumerate}
        \item no element of $\Delta$ is below $\Cut$,
        \item for every $\CTo_{\varphi}\in\Delta$ such that $\CTz\nvdash \forall x.\varphi(x)$, $\Cut$ is not below $\CTo_{\varphi}$.
    \end{enumerate}
    In particular for every $a\in\mathbb{B}$, $\CTo_{\varphi_a}$ is incomparable with $\Cut$.
\end{proposition}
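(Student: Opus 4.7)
Part (1) is an immediate application of the observation stated in the paragraph preceding the proposition, with $A=\Cut$, $B=\EA$, and $C=\CTo_{\varphi}$: assuming $\CTo_{\varphi}\LHD\Cut$, Proposition \ref{prop_sem_com_cut} guarantees that $\Cut$ is semantically conservative over $\EA$, which by the transfer observation forces $\CTo_{\varphi}$ to be semantically conservative over $\EA$ as well. But this contradicts Proposition \ref{prop_not_se_con_restr}: any model of $\PA\supseteq\EA$ expanding to a model of $\CTo_{\varphi}$ must be recursively saturated, whereas non-recursively-saturated models of $\PA$ (e.g. $\N$) abound.

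For (2), the plan is to mimic the proof of Lemma \ref{order2lemma}, with the new ingredient that $\Cut+\IND(\LT)\vdash\CT$. Assume towards contradiction that $\Cut\LHD\CTo_{\varphi}$ via a translation $\sigma$ and that $\CTz\nvdash\forall x.\varphi(x)$. By G\"odel's second incompleteness theorem, fix a model
\[\langle\mathcal{M},T\rangle\models\CTz+\exists x.\neg\varphi(x)+\neg\Con_{\CTz+\exists x.\neg\varphi(x)}\text{,}\]
let $c\in\mathcal{M}$ be the least element satisfying $\neg\varphi$ (which exists since $\CTz\vdash\PA$), and set $T_c:=\{\psi\in M:\langle\mathcal{M},T\rangle\models\dpt(\psi)\leqslant c\wedge T(\psi)\}$. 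Fact \ref{fact_modeltheory_ct_0} yields $\langle\mathcal{M},T_c\rangle\models\CTm+\IND(\LT)$, and since $\varphi$ defines the initial segment $\{x:x<c\}$ in $\mathcal{M}$, also $\langle\mathcal{M},T_c\rangle\models\CTo_{\varphi}$. Let $T':=\{a\in M:\langle\mathcal{M},T_c\rangle\models\sigma(T)(a)\}$; the assumption $\CTo_{\varphi}\vdash\sigma[\Cut]$ then yields $\langle\mathcal{M},T'\rangle\models\Cut$, and the $\LT$-definability of $T'$ from $T_c$ ensures that $\langle\mathcal{M},T'\rangle$ inherits $\IND(\LT)$. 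Applying the least-number principle of $\IND(\LT)$ to $\{x:\neg\CTm(x)\}$ contradicts the $\Cut$ axiom, so $\forall x.\CTm(x)$ holds and hence $\langle\mathcal{M},T'\rangle\models\CT$. From here the argument copies Lemma \ref{order2lemma}: the partial truth predicates $\Tr^{\LT}_n$ combined with cut-elimination in $\PA$ allow $\CT$ to prove $\Con_{\CTz+\exists x.\neg\varphi(x)}$ internally, contradicting the arithmetical fact $\mathcal{M}\models\neg\Con_{\CTz+\exists x.\neg\varphi(x)}$.

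For the "in particular" clause, part (1) gives $\CTo_{\varphi_a}\not\LHD\Cut$ immediately; to invoke part (2) in the other direction, one must verify $\CTz\nvdash\forall x.\varphi_a(x)$ for each $a\in\mathbb{B}$. This is delivered by the $\Sigma_1$-flexibility of $\eta$ (Theorem \ref{tw_flex}): since $\forall x.\varphi_a(x)$ entails $\Con_{\CTz}\lor\text{"}\{y:\eta(y)\}\text{ is an ultrafilter containing }\num{a}\text{"}$, one realises a model of $\CTz+\neg\Con_{\CTz}$ in which $\eta$ is (for instance) universally false, falsifying both disjuncts. The hard part will be part (2), and the crux there is that $\IND(\LT)$ survives both the truncation $T\mapsto T_c$ (guaranteed by Fact \ref{fact_modeltheory_ct_0}) and the subsequent definable restriction $T_c\mapsto T'$, so that $\Cut$ collapses to full $\CT$ and the closing G\"odelian consistency argument of Lemma \ref{order2lemma} applies cleanly.
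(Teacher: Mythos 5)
Your proposal follows essentially the same route as the paper's proof for both parts: part (1) by pushing semantic conservativity of $\Cut$ over $\EA$ through the translation and colliding it with Proposition \ref{prop_not_se_con_restr}, and part (2) by rerunning the model construction of Lemma \ref{order2lemma} using $\Cut+\IND(\LT)\vdash\CT$; your explicit verification that $\CTz\nvdash\forall x.\varphi_a(x)$ via the flexibility of $\eta$ correctly fills in a step the paper leaves implicit. One small correction to part (1): the witnessing model must be a \emph{nonstandard} non-recursively-saturated model of $\PA$, not $\N$. Proposition \ref{prop_not_se_con_restr} rests on overspill and Lachlan's theorem and therefore only concerns nonstandard models; indeed $\N$ itself \emph{does} expand to a model of every $\CTo_{\varphi}\in\Delta$ (interpret $T$ as the set of true arithmetical sentences), so choosing $\N$ yields no contradiction. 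Any nonstandard, non-recursively-saturated model of $\PA$ (e.g.\ a pointwise definable one) repairs this immediately.
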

\begin{proof}
For the first, suppose that there is $\CTo_{\varphi}\in\Delta$ such that $\CTo_{\varphi}\LHD\Cut$. Let $\Theta(x)$ be a definition of $T$, such that $\Cut\vdash \sigma[\CTo_{\varphi}]$, where $\sigma$ is the (unique) translation such that $\sigma(T) = \Theta$. Take any $\mathcal{M}\models \PA$ which is not recursively saturated. By Proposition \ref{prop_sem_com_cut}, let $T$ be such that $\langle\mathcal{M}, T\rangle\models \Cut$. Let $T'$ be the set defined in $\langle\mathcal{M}, T\rangle$ by $\Theta$. Then $\langle\mathcal{M}, T'\rangle\models \CTo_{\varphi}$, what contradicts Proposition \ref{prop_not_se_con_restr}.

\medskip
For the second, we use an analogous argument to that presented in \ref{order2lemma}. This is possible since $\Cut + \IND(\LT)\vdash\CT$. Aiming at a contradiction, assume that for some $\varphi$, such that $\CTz\nvdash\forall x.\varphi(x)$, $\Cut\LHD\CTo_{\varphi}$. Take $\langle \mathcal{M}, T\rangle\models \CTz + \exists x.\neg\varphi(x) + \neg\Con_{\CTz + \exists x.\neg\varphi(x)}.$ For the least
$a$ such that $\mathcal{M}\models \neg\varphi(a)$, we see that $\langle\mathcal{M}, T_a\rangle\models \CTo_{\varphi} + \IND(\LT)$. Hence, by our assumption, there is $T'$ such that $\langle \mathcal{M}, T'\rangle \models \Cut + \IND(\LT)$. However, then $\mathcal{M}\models \Con_{\CTz + \exists x.\neg\varphi(x)},$ a contradiction.
\end{proof}

Let us put $\Delta\vee \Cut:= \{\CTo_{\varphi}\vee\Cut:\ \CTo_{\varphi}\in\Delta\}$. Then we can reprove Main Theorem for $\Delta\vee \Cut$ instead of $\Delta$. The next theorem provides us with another way of realizing the embedding of $\mathbb{B}$ into c$\DEF$. This time the image of the embedding consists uniquely of theories $\LHD$-below $\Cut$.

\begin{theorem}
$\Delta\vee\Cut$ is a countable universal distributive lattice.
\end{theorem}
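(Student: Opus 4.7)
The plan is to reprove Theorem \ref{tw_main} for the map $F'\colon \mathbb{B}\to \Delta\vee\Cut$ given by $F'(a) = F(a)\lor\Cut$, where $F$ is the embedding from Theorem \ref{tw_main}. Universality of the target will then follow from the countable universality of $\mathbb{B}$ as a distributive lattice. I would organise the argument into two blocks: (i) verify that $\Delta\lor\Cut$ is a sublattice of $\langle\cDEF,\LHD\rangle$ and that $F'$ preserves the lattice operations, and (ii) show that $F'$ is order-reflecting, i.e.\ $F'(a)\LHD F'(b)\Rightarrow a\leqslant_{\mathbb{B}} b$.

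For (i), recall from the distributive-lattice theorem that infimum in $\langle\cDEF,\LHD\rangle$ is realised by propositional $\lor$ on sentences, while supremum is realised by $\land\, \repsymb_{\alpha}(\cdot)$. Hence
\[\inf\{F'(a),F'(b)\}\RLHD F(a)\lor F(b)\lor \Cut,\]
and by Lemma \ref{infsuplemma} together with Theorem \ref{tw_main} this is mutually definable with $F(a\cap b)\lor \Cut = F'(a\cap b)$. For the supremum, I would appeal to the distributive identity $(x\cap c)\cup(y\cap c) = (x\cup y)\cap c$ (valid in every distributive lattice) to reduce $\sup\{F'(a),F'(b)\}$ to $\sup\{F(a),F(b)\}\lor\Cut \RLHD F(a\cup b)\lor\Cut = F'(a\cup b)$. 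These two computations simultaneously witness closure of $\Delta\lor\Cut$ under the lattice operations (since $\CTo_{\alpha\land\beta}$ and $\CTo_{\alpha\lor\beta}$ belong to $\Delta$ whenever $\CTo_\alpha,\CTo_\beta$ do) and operation-preservation of $F'$.

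For (ii), I would prove the natural analogue of Lemma \ref{order2lemma}: if $\CTo_\alpha\lor\Cut\LHD\CTo_\beta\lor\Cut$ for $\CTo_\alpha,\CTo_\beta\in\Delta$, then $\CTz\vdash\forall x.\alpha(x)\to\forall x.\beta(x)$. Granted this, order-reflection follows by copying the injectivity part of Theorem \ref{tw_main} verbatim: assuming $a\not\leqslant_{\mathbb{B}} b$, exhibit the $\PA$-provable $\Sigma_1$-definable ultrafilter $\mu$ containing $a\cap\lnot b$ and use the flexibility of $\eta$ to contradict $\CTz\vdash\forall x.\varphi_a(x)\to\forall x.\varphi_b(x)$.

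The principal obstacle is the analogue of Lemma \ref{order2lemma}, because the translation $\sigma$ witnessing the definability may, on the constructed model, realise either disjunct of $\CTo_\alpha\lor\Cut$. I would follow the template of Lemma \ref{order2lemma}: take $\langle\mathcal{M},T\rangle\models\CTz+\forall x.\alpha(x)+\exists x.\lnot\beta(x)+\lnot\Con_{\CTz+\forall x.\alpha(x)\land\exists x.\lnot\beta(x)}$, pass to $T_c$ for the least $c$ with $\mathcal{M}\models\lnot\beta(c)$, and obtain $\langle\mathcal{M},T_c\rangle\models\CTo_\beta\lor\Cut$ with full $\LT$-induction via Fact \ref{fact_modeltheory_ct_0}. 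Definability then produces $\langle\mathcal{M},T'\rangle\models\CTo_\alpha\lor\Cut$, which still satisfies $\IND(\LT)$. The new case split is resolved uniformly: if $\langle\mathcal{M},T'\rangle\models\CTo_\alpha$, combining with $\forall x.\alpha(x)$ and induction upgrades the model to $\CT$; if instead $\langle\mathcal{M},T'\rangle\models\Cut$, the implication $\Cut+\IND(\LT)\vdash\CT$ (noted after Proposition \ref{prop_not_se_con_restr}) again yields $\CT$. In either case the cut-elimination argument of Lemma \ref{order2lemma} delivers $\mathcal{M}\models\Con_{\CTz+\forall x.\alpha(x)\land\exists x.\lnot\beta(x)}$, contradicting the choice of $\langle\mathcal{M},T\rangle$.
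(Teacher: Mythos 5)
Your proposal is correct and follows essentially the same route as the paper's: first check that $\lor\,\Cut$ commutes with the lattice operations (via distributivity and Lemma \ref{infsuplemma}), then rerun the model-theoretic argument of Lemma \ref{order2lemma} on a model of $\CTz$ plus the relevant arithmetical facts plus a suitable inconsistency assertion, cut down to $T_c$, using the observation that either disjunct of $\CTo_{\alpha}\lor\Cut$ together with $\IND(\LT)$ yields $\CT$ and hence the forbidden consistency statement. The only (harmless) organisational difference is that you establish full order-reflection through an analogue of the incomparability lemma for arbitrary pairs, whereas the paper reduces injectivity to separating $F'(a)$ from $F'(\mathbb{0})$ for $a\neq\mathbb{0}$ and shows directly that $\CTo_{\varphi_{a}}\lor\Cut\not\LHD\CTo_{\varphi_{\mathbb{0}}}$.
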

\begin{proof}[Sketch of proof]
It is easy to see that $\Delta\vee\Cut$ is indeed a sublattice of c$\DEF$. It is also easy to see that the function
$$a\longmapsto\CTo_{\varphi_{a}}\lor\ \Cut\text{,}$$
where $\varphi_a$ is defined as in Section \ref{sec_main}, preserves suprema and infima as the function $F$ from Section \ref{sec_main} preserves them. Hence it is enough to show that this function is injective. Equivalently, for $a\neq\mathbb{0}$, it is not true that
$$\CTo_{\varphi_{a}}\lor\ \Cut\RLHD\CTo_{\varphi_{\mathbb{0}}}\lor\ \Cut\text{.}$$
We prove that $\CTo_{\varphi_{a}}\lor\ \Cut\not\LHD\CTo_{\varphi_{\mathbb{0}}}$. We begin by constructing a model of
$$\CTz+\forall x.\varphi_{a}+\lnot\forall x.\varphi_{\mathbb{0}}=\CTz+\forall x.\varphi_{a}+\lnot\Con_{\CTz}\text{.}$$
Then proceed similarly as in the proof of Lemma \ref{order2lemma} obtaining a model $\langle\mathcal{M},T_{c}\rangle\models \forall x.\varphi_{a}+\text{IND}(\LT)+\lnot\Con_{\CTz}$. It cannot define $\CTo_{\varphi_a}\lor\ \Cut$ as $\mathcal{M}$ cannot be expanded to a model of $\CT$.  

\end{proof}

Unfortunately, we still do not know if there is a truth definition for $\LA$ that does not define $\UTBm$. We get the following small insight, which crucially relies on the observation due to Volker Halbach (see \cite{halbach_disq_anal}, Proposition 9.2).

\begin{proposition}\label{prop_between_tb_utb}
    If $\alpha$ is a definition of truth for $\LA$, then there is a true $\Pi_2$-sentence $\pi$, such that $\UTB\LHD \alpha + \pi + \IND(\mathcal{L}_{\alpha}).$
\end{proposition}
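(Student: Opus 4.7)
The plan is to let $\pi$ be a $\Pi_2$-sentence asserting that $\alpha$ uniformly proves each numerically substituted instance of the uniform T-biconditional, and then to internalize these external provabilities by means of the partial reflection scheme for $\alpha$, which is derivable in $\alpha+\IND(\mathcal{L}_\alpha)$ through the standard construction of partial truth predicates for the enriched signature.

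Let $\Theta(x)$ witness that $\alpha$ is a definition of truth for $\LA$, and let $\Form^1_\LA(\varphi)$ abbreviate the elementary predicate ``$\varphi$ is an $\LA$-formula with exactly one free variable $v$''. Set
\[\pi:=\forall\varphi\forall x\bigl(\Form^1_\LA(\varphi)\rightarrow\Prov_\alpha\bigl(\qcr{\Theta[\varphi(\dot{x}/v)]\leftrightarrow\varphi(x)}\bigr)\bigr).\]
This is $\Pi_2$ because $\Prov_\alpha$ is $\Sigma_1$, and it is true because, by the definition of a truth definition, $\alpha$ proves $\Theta(\qcr{\varphi(\num{n}/v)})\leftrightarrow \varphi(n)$ for every standard $\LA$-formula $\varphi(v)$ and every standard $n$. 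The witnessing translation $\tau$ sends the truth predicate $T$ of $\UTB$ to $\Theta$ and is the identity on $\LA$. Under $\tau$, the $\EA$ portion of $\UTB$ is immediate, the $\LT$-induction axioms become instances of $\IND(\LA\cup\{\Theta\})\subseteq\IND(\mathcal{L}_\alpha)$, and the only nontrivial task is to derive, for every $\LA$-formula $\varphi(v)$, the translated uniform T-biconditional $\forall x(\Theta(\qcr{\varphi(\dot{x}/v)})\leftrightarrow\varphi(x))$ inside $\alpha+\pi+\IND(\mathcal{L}_\alpha)$.

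For this, fix $\varphi(v)$ and put $\chi(x):=\Theta(\qcr{\varphi(\dot{x}/v)})\leftrightarrow\varphi(x)$, an $\mathcal{L}_\alpha$-formula of some fixed depth $k=k(\Theta,\varphi)$. Because $\alpha+\IND(\mathcal{L}_\alpha)$ is a finitely axiomatized sequential extension of $\PA$ in a finite signature, one can build inside it a partial truth predicate $\Tr^{\mathcal{L}_\alpha}_k$ for $\mathcal{L}_\alpha$-formulas of depth at most $k$, entirely analogous to the $\Tr_n$ recalled in Section \ref{sec_arithm} but with additional compositional clauses for the non-arithmetical predicates of $\alpha$. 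Formula induction then yields the uniform T-biconditional $\forall x(\Tr^{\mathcal{L}_\alpha}_k(\qcr{\chi(\dot{x}/x)})\leftrightarrow\chi(x))$, while induction on the length of $\alpha$-proofs, combined with cut elimination (provable in $\PA\subseteq\alpha+\IND(\mathcal{L}_\alpha)$), yields the partial reflection scheme $\forall\sigma\bigl(\dpt(\sigma)\leqslant k\wedge\Prov_\alpha(\sigma)\rightarrow\Tr^{\mathcal{L}_\alpha}_k(\sigma)\bigr)$, provided $k$ is chosen above the logical depth of $\alpha$. Instantiating $\pi$ at our fixed $\varphi$ gives $\forall x\Prov_\alpha(\qcr{\chi(\dot{x}/x)})$, reflection upgrades this to $\forall x\Tr^{\mathcal{L}_\alpha}_k(\qcr{\chi(\dot{x}/x)})$, and the T-biconditional for $\chi$ converts it into $\forall x\chi(x)$, as required.

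The principal obstacle is adapting the standard partial truth and partial reflection machinery to the enriched signature $\mathcal{L}_\alpha$: one must define $\Tr^{\mathcal{L}_\alpha}_k$ so that its compositional clauses correctly handle the finitely many non-arithmetical predicates of $\alpha$, and verify that the cut-elimination-based reflection argument goes through uniformly once $k$ is taken above the fixed depth of $\alpha$. These points are conceptually routine but must be spelled out with care, and this is precisely where Halbach's observation from \cite{halbach_disq_anal} is invoked to bridge the gap between the external $\alpha$-provability of each instance and the internal uniform disquotation delivered by $\Theta$.
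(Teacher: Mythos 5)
Your proof is correct and follows essentially the same route as the paper: you choose $\pi$ to be the true $\Pi_2$-sentence asserting that $\alpha$ proves the relevant Tarski biconditionals via $\Theta$, and then pass from ``each instance is $\alpha$-provable'' to the uniform statement by means of the uniform reflection principle available in $\alpha+\IND(\mathcal{L}_{\alpha})$. The only difference is that the paper obtains that reflection principle by citing the Kreisel--L\'evy theorem, whereas you re-derive it inline via partial truth predicates for $\mathcal{L}_{\alpha}$ and cut elimination --- which is just the standard proof of that theorem, so no new idea or gap is involved.
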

\begin{proof}
    Fix $\alpha\in\DEF$ and let $\pi$ be the $\Pi_2$ $\LA$-sentence saying ``$\alpha$ defines $\TBm$ by the use of $\Theta(x)$'', where $\Theta(x)$ witnesses the definability of $\TBm$ in $\alpha$. By the Kreisel-Levy theorem (see \cite{bek05}) we conclude that the uniform reflection principle over $\alpha$ is provable in $\alpha+\IND(\mathcal{L}_{\alpha})$. That is, for every $\mathcal{L}_{\alpha}$-formula $\varphi(x)$ the following sentence is provable in $\alpha+\IND(\mathcal{L}_{\alpha})$
    \[\forall x \bigl(\Prov_{\alpha}[\varphi(\dot{x})]\rightarrow \varphi(x)\bigr).\]
    Fix any $\LA$ formula $\psi(x)$ and observe that
    \[\pi + \EA\vdash \forall x.\Prov_{\alpha}[\Theta[\psi(\dot{\dot{x}})]\leftrightarrow \psi(\dot{x})].\]
    Hence, by uniform reflection, we obtain
    \[\pi+\IND(\mathcal{L}_{\alpha}) + \alpha \vdash \forall x \bigl(\Theta[\psi(\dot{x})]\leftrightarrow \psi(x)\bigr).\]
\end{proof}

The corollary below witnesses that the canonical compositional definition of truth is optimal up to the quantifier complexity. The core of the argument (in the proof of Theorem~\ref{tw_complexity}) follows from the well-known property of \textit{strict} $\Sigma^1_1$ formulae. Since we would like to make the paper self-contained we present also a proof which does not use a detour through subsystems of Second Order Arithmetic. Both reasonings are essentially the same: the key role in the first one is played by the Weak K\"onig's Lemma, while the second uses the arithmetized completeness theorem. We recall the definition and theorem from \cite{simpson}, Section VIII.2

\begin{definition}
    A formula $\phi$ is strict $\Sigma_1^1$ ($\phi\in S\Sigma^1_1$) if  $\phi$ is of the form $\exists X\psi$, for some $\psi\in\Pi^0_1$.
\end{definition}

\begin{theorem}\label{thm_strict_wkl}
    Let $\mathcal{M}'$ be a model of WKL$_0$ and $\mathcal{M}$ be an arbitrary $\omega$-submodel of $\mathcal{M}'$ (that is the universe of first-order objects of $\mathcal{M}$ and $\mathcal{M}'$ are the same). Then the following conditions are quivalent
    \begin{enumerate}
        \item  $\mathcal{M}$ is a model of WKL$_0$.
        \item $\mathcal{M}$ and $\mathcal{M}'$ satisfy the same strict $\Sigma^1_1$ formulae (with parameters from $\mathcal{M}$).
    \end{enumerate}
\end{theorem}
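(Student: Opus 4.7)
The plan is to prove the equivalence in both directions, in each case exploiting the tight correspondence between strict $\Sigma^1_1$ formulae and the existence of paths through binary trees that underlies WKL.

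For $(1)\Rightarrow(2)$, the upward absoluteness is almost immediate: if $\mathcal{M}\models\exists X\,\psi(X,\bar{Z})$ with $\psi\in\Pi^0_1$ and parameters $\bar{Z}$ from $\mathcal{M}$, then the witness $X\in\mathcal{M}$ also lies in $\mathcal{M}'$, and $\psi(X,\bar{Z})$ contains no set quantifiers, so it is absolute between $\mathcal{M}$ and $\mathcal{M}'$ (which share the first-order universe). For the downward direction, suppose $\mathcal{M}'\models\exists X\,\psi(X,\bar{Z})$. I would rewrite $\psi(X,\bar{Z})$ in the form $\forall n\,\theta(X,n,\bar{Z})$ with $\theta$ bounded and depending on $X$ only through an initial segment of length computable from $n$. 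Then the set $T$ of those finite binary strings $\sigma$ for which $\theta$ holds on every proper initial segment of $\sigma$ is a $\Delta^0_0$-definable binary tree, and it lies in $\mathcal{M}$ by the $\Delta^0_1$-comprehension axiom of WKL$_0$. The $\mathcal{M}'$-witness shows that $T$ is infinite in $\mathcal{M}'$; infinity is a first-order property of $T$, hence absolute, so $T$ is infinite in $\mathcal{M}$ as well. Applying WKL inside $\mathcal{M}$ produces a path $X\in\mathcal{M}$, which by the construction of $T$ satisfies $\psi(X,\bar{Z})$.

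For $(2)\Rightarrow(1)$, I would verify each axiom of WKL$_0$ in $\mathcal{M}$ in turn. The first-order axioms and the $\Sigma^0_1$-induction schemas hold in $\mathcal{M}$ by sheer absoluteness, since they involve no set quantifiers, their set parameters belong to both models, and the first-order universe is shared. For $\Delta^0_1$-comprehension, given $\phi\in\Sigma^0_1$ and $\psi\in\Pi^0_1$ with $\mathcal{M}\models\forall n\,(\phi(n)\leftrightarrow\psi(n))$, the same equivalence transfers to $\mathcal{M}'$ by absoluteness, so that $Y=\{n:\phi(n)\}$ exists in $\mathcal{M}'$. The key point is that the existence of such a $Y$ can be written as the strict $\Sigma^1_1$ statement $\exists Y\,\forall n\,\bigl((n\in Y\to\psi(n))\wedge(\phi(n)\to n\in Y)\bigr)$, whose matrix is genuinely $\Pi^0_1$, so $(2)$ places such a $Y$ in $\mathcal{M}$. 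For WKL itself: given an infinite binary tree $T\in\mathcal{M}$, its infinity transfers to $\mathcal{M}'$ as above, so WKL in $\mathcal{M}'$ gives a path; the statement ``there is a path through $T$'' is strict $\Sigma^1_1$ with $T$ as a parameter, and $(2)$ again supplies a path in $\mathcal{M}$.

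The main obstacle is the downward part of $(1)\Rightarrow(2)$, where the strength of WKL is genuinely used through the tree-coding of a strict $\Sigma^1_1$ formula and where one must check carefully that the coding tree is actually an element of $\mathcal{M}$. Everything else reduces to careful bookkeeping: verifying that the axioms of WKL$_0$ either contain no set quantifiers (and are therefore absolute between the two models) or can be reformulated as strict $\Sigma^1_1$ statements of the correct shape so that hypothesis $(2)$ applies directly.
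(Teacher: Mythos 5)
Your proof is correct. The paper does not actually prove this theorem---it is quoted from Simpson's \emph{Subsystems of Second Order Arithmetic}, Section VIII.2---and your argument is precisely the standard one: upward absoluteness is immediate from the absoluteness of arithmetical formulae between $\omega$-submodels, the downward direction uses the normal-form tree coding of the $\Pi^0_1$ matrix (with the tree in $\mathcal{M}$ by $\Delta^0_1$-comprehension, its infinitude absolute, and a path supplied by WKL in $\mathcal{M}$), and the converse direction correctly repackages $\Delta^0_1$-comprehension and path-existence as strict $\Sigma^1_1$ statements so that hypothesis (2) applies.
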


\medskip
We say that a sentence $\alpha$ has a standard model if the standard model of arithmetic, $\N$, can be expanded to a model of $\alpha$. That is, in $\N$ one can find interpretations for additional symbols used in $\alpha$ in such a way that the expanded structure makes $\alpha$ true. Without loss of generality, we assume that apart from the symbols for the arithmetical operations, sentences that we consider use a single unary predicate $P$ (this can be assumed thanks to the definable pairing function in $\EA$).

\begin{theorem}\label{tw_complexity}
    If $\alpha$ is a $\Sigma_2$-sentence such that
    \begin{itemize}
        \item $\alpha$ has a standard model and
        \item the only non-logical, non-arithmetic symbol that occurs in $\alpha$ is a unary relation symbol~$P$,
    \end{itemize}
    then there is an arithmetically definable set $A$ such that $(\mathbb{N}, A)\models \alpha$.
\end{theorem}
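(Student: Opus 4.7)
The plan is to reduce the claim to a basis theorem for non-empty $\Pi^0_1$-classes. First I would put $\alpha$ in prenex form $\exists \vec{x}\,\forall\vec{y}\,\theta(\vec{x},\vec{y},P)$ with $\theta\in\Delta_0(\LA\cup\{P\})$. The hypothesis that $\alpha$ has a standard model yields a tuple $\vec{n}\in\N$ and a set $P_0\subseteq\N$ with $\N\models\forall\vec{y}\,\theta(\vec{n},\vec{y},P_0)$. Fixing these $\vec{n}$, the outer existential block is discharged, and it only remains to replace $P_0$ by an arithmetically definable set preserving the $\Pi_1$-matrix.

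The key observation is that the class
\[
\mathcal{C}:=\bigl\{A\subseteq\N:\ \N\models\forall\vec{y}\,\theta(\vec{n},\vec{y},A)\bigr\}
\]
is a non-empty $\Pi^0_1$-subclass of $2^{\omega}$: for fixed $\vec{n},\vec{y}$ the $\Delta_0$-formula $\theta(\vec{n},\vec{y},A)$ depends recursively on only finitely many bits of $A$, so $\mathcal{C}$ is the intersection of a recursive family of clopen sets, and it is inhabited by $P_0$. I would then invoke any standard basis theorem---the Low Basis Theorem of Jockusch--Soare, which is the Weak K\"onig's Lemma ingredient the paper advertises, will do---to extract a member $A\in\mathcal{C}$ of Turing degree $\leq\mathbf{0}'$, hence $\Delta^0_2$, hence arithmetically definable. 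For such $A$ one has $\N\models\forall\vec{y}\,\theta(\vec{n},\vec{y},A)$, and, with witness $\vec{n}$ for the outer existential block, $(\N,A)\models\alpha$.

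Framed through Theorem~\ref{thm_strict_wkl}, this is the same as saying that $\exists P\,\alpha(P)$, after bundling $\exists\vec{x}$ into the existential set quantifier, is a strict $\Sigma^1_1$ sentence which holds in some $\omega$-model of WKL$_0$ over $\N$ (any one containing $P_0$), and therefore, by strict $\Sigma^1_1$-absoluteness, in an $\omega$-model of WKL$_0$ whose sets are all arithmetic. The dual route through the arithmetized completeness theorem applied to a suitable finite recursive consistent extension of $\EA$ would instead build a $\Delta^0_2$-definable expansion of $\N$ realising $\alpha$ directly. I expect the main conceptual obstacle to be making the complexity bound visibly sharp: both ingredients really do rest on $\alpha\in\Sigma_2$, because the inner block must be $\Pi_1$ for $\mathcal{C}$ to be $\Pi^0_1$, and the outer existential block must be instantiable already in $\N$ from the standard-model hypothesis before the basis theorem is applied; neither condition survives the passage to $\Sigma_3$.
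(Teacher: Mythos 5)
Your proof is correct and is essentially the paper's first proof: the paper packages the same $\Pi^0_1$-class argument by passing to the $\omega$-models $(\mathbb{N},\mathcal{X})$ and $(\mathbb{N},\textnormal{Def}(\mathbb{N}))$ of WKL$_0$ and quoting strict $\Sigma^1_1$-absoluteness (Theorem~\ref{thm_strict_wkl}), whereas you extract the arithmetical (indeed $\Delta^0_2$) witness directly by applying a basis theorem to the nonempty $\Pi^0_1$ class $\mathcal{C}$ --- the same Weak K\"onig's Lemma content in recursion-theoretic rather than reverse-mathematical dress. Your closing remarks also correctly identify the paper's alternative route via the arithmetized completeness theorem and the reason the $\Sigma_2$ bound is essential.
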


\begin{proof}[First proof]
    Fix $\alpha:= \exists x. \beta(x)$, where $\beta$ is $\Pi_1$, and assume that $B$ is such that $(\mathbb{N}, B)\models \alpha$. Fix $n$ such that $(\mathbb{N}, B)\models \beta(n)$. Since $n$ can be named with a standard numeral we will stop mentioning it. Let $\mathcal{X}$ be the least set containing $B$, closed under the Turing Jumps and Turing Reducibility. Then $(\mathbb{N}, \mathcal{X})\models \textnormal{ACA}_0$. Since $B\in\mathcal{X}$, $(\mathbb{N},\mathcal{X})\models \exists X.\beta[t\in X/P(t)]$ (that is: each occurrence of $P(t)$ for a term $t$ is replaced in $\beta$ with $t\in X$). Since $\beta[t\in X/P(t)]$ is $\Pi^0_1$, $\exists X.\beta[t\in X/P(t)]$ is a strict $\Sigma^1_1$ formula. Therefore, since $\textnormal{Def}(\mathbb{N})\subseteq \mathcal{X}$, by Theorem \ref{thm_strict_wkl}, $(\mathbb{N}, \textnormal{Def}(\mathbb{N}))\models \exists X. \beta[t\in X/P(t)],$ where $\textnormal{Def}(\mathbb{N})$ is the collection of arithmetically definable subsets of $\mathbb{N}$. Hence, there is an arithmetically definable subset $A$ such that $(\mathbb{N},A)\models \alpha$.
\end{proof}

\medskip
The second proof of the theorem uses, instead of the Weak K\"onig's Lemma, some standard techniques related to the arithmetized completeness theorem. We briefly recall this material, redirecting to \cite{lelyk_global}, Section 2.3 for the details. We say that a model $\mathcal{N}$ is \emph{strongly interpretable} in a model $\mathcal{M}$ if:
\begin{enumerate}
    \item the domain of $\mathcal{N}$ and the functions and relations of $\mathcal{N}$ are definable subsets of $\mathcal{M}$;
    \item there is an $\mathcal{M}$-definable relation $S_{\mathcal{N}}$ such that in $\mathcal{M}$ it is true that $S_{\mathcal{N}}$ is a satisfaction relation for $\mathcal{N}$.
\end{enumerate}
If $\mathcal{N}\models \PAm$ is strongly interpretable in $\mathcal{M}\models\PA$, then there is an $\mathcal{M}$-definable initial embedding $\iota:M\rightarrow N$, which makes $\mathcal{M}$ an initial segment of $\mathcal{N}$ (if $b\in M$ and $\mathcal{N}\models a<b$, then $a\in M$). Finally, if $\mathcal{M}\models \PA + \Con(Th)$, then there is a model $\mathcal{N}\models Th$, which is strongly interpretable in $\mathcal{M}$.

\begin{proof}[Second proof of Theorem \ref{tw_complexity}]
Fix any $\alpha: = \exists  x .\beta(x)$, where $\beta$ is $\Pi_1$, and suppose it satisfies the assumptions.  Since $\alpha$ has a standard model, there is an $n\in\omega$ such that $\beta(\num{n})$ has a standard model as well.  By our assumptions $\mathbb{N}\models\Con(\qcr{\beta(\num{n})})$. By the arithmetized completeness theorem, there is $\mathcal{M}\models \beta(\num{n})$ which is strongly interpretable in $\mathbb{N}$. Let  $\iota$ be the embedding of $\mathbb{N}$ onto an initial segment of $\mathcal{M}$, $P^{\mathcal{M}}$ be the $\mathbb{N}$-definable interpretation for $P$ in $\mathcal{M}$ and $\iota[\mathbb{N}]$ be the image of $\iota$ (which is $\mathbb{N}$-definable as well). Define $A:= \iota[\mathbb{N}]\cap P^{\mathcal{M}}$. Then $A$ is definable in $\mathbb{N}$ and $(\mathbb{N}, A)$ is a submodel of $\mathcal{M}$. Also, since $\mathbb{N}$ is an initial segment of $\mathcal{M}$ closed under multiplication and $\beta(\num{n})$ is a $\Pi_1$-formula, then $(\mathbb{N}, A)\models \beta(\num{n})$.
\end{proof}

\begin{corollary}
    If $\alpha$ satisfies the assumptions of Theorem \ref{tw_complexity}, then $\Th(\mathbb{N})+\alpha$ is not a theory of truth for $\LA$.
\end{corollary}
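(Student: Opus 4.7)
The plan is to combine Theorem \ref{tw_complexity} with Tarski's undefinability of truth. Suppose, towards contradiction, that $\Th(\mathbb{N}) + \alpha$ is a theory of truth for $\LA$, witnessed by some formula $\Theta(x)$ in the language $\LA \cup \{P\}$. Then for every $\LA$-sentence $\varphi$ one has $\Th(\mathbb{N}) + \alpha \vdash \Theta(\qcr{\varphi}) \leftrightarrow \varphi$.

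Next, I would invoke Theorem \ref{tw_complexity} to obtain an arithmetically definable set $A \subseteq \mathbb{N}$ with $(\mathbb{N}, A) \models \alpha$. Observe that $(\mathbb{N}, A)$ is also a model of $\Th(\mathbb{N})$, because $\Th(\mathbb{N})$ is formulated purely in $\LA$ and the $\LA$-reduct of $(\mathbb{N}, A)$ is the standard model. Consequently every T-biconditional $\Theta(\qcr{\varphi}) \leftrightarrow \varphi$ holds in $(\mathbb{N}, A)$.

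Now let $\psi_A(x)$ be an $\LA$-formula defining $A$ in $\mathbb{N}$, and let $\Theta^*(x)$ be the $\LA$-formula obtained from $\Theta(x)$ by replacing each occurrence of an atomic subformula $P(t)$ with $\psi_A(t)$. By the way $\Theta^*$ is constructed, for every closed term input, $\Theta(\qcr{\varphi})$ holds in $(\mathbb{N}, A)$ iff $\Theta^*(\qcr{\varphi})$ holds in $\mathbb{N}$. Hence for every $\LA$-sentence $\varphi$,
\[
\mathbb{N} \models \Theta^*(\qcr{\varphi}) \leftrightarrow \varphi,
\]
so $\Theta^*$ is an arithmetical definition of truth for $\LA$ in $\mathbb{N}$, contradicting Tarski's theorem.

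I do not expect any substantive obstacle: the whole content of the argument is packaged in Theorem \ref{tw_complexity} and the elementary observation that an arithmetically definable expansion of $\mathbb{N}$ cannot carry a Tarskian truth predicate for $\LA$ (otherwise substitution would yield an arithmetical truth definition). The only minor subtlety is the substitution step turning the $(\LA\cup\{P\})$-formula $\Theta$ into an $\LA$-formula $\Theta^*$, which is routine.
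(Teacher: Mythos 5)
Your proof is correct and is exactly the argument the paper intends (the corollary is stated without proof there): apply Theorem \ref{tw_complexity} to get an arithmetically definable $A$ with $(\mathbb{N},A)\models\alpha$, and then substitution of the $\LA$-definition of $A$ for $P$ turns the putative truth predicate $\Theta$ into an arithmetical truth definition over $\mathbb{N}$, contradicting Tarski. Equivalently, in the paper's terminology, the translation $P\mapsto\psi_A$ witnesses $\Th(\mathbb{N})\RHD\Th(\mathbb{N})+\alpha$, so $\Th(\mathbb{N})$ itself would be a theory of truth for $\LA$ --- the same contradiction the authors invoke in the proof of Theorem \ref{tw_pakh-viss}.
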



\section{Open Questions}


Despite our efforts, the main question motivating our investigations remains open. We state it once again for future reference:
\paragraph{Question 1} Is there $\alpha\in\DEF$ that does not define $\UTBm$?

\bigskip

In Theorem \ref{tw_pakh-viss} we proved that $\DEF$ is not $\Sigma_2$-definable. The natural definition of this set is $\Sigma_3$, so it is natural to ask:

\paragraph{Question 2} Is $\DEF$ a $\Pi_3$-definable set?
\bigskip

Additionally we would like to know more about the structure of $(\DEF, \LHD)$. For two definitions of truth $\alpha,\ \beta$, let us put $[\alpha,\beta]:= \{\gamma:\ \alpha\LHD\gamma\LHD\beta\}$.

\paragraph{Question 3} Is it true that for all $\alpha\LHD\beta$, if the interval $[\alpha, \beta]$ is finite, then $\beta\LHD\alpha$?

\paragraph{Question 4} Are there $\alpha,\beta\in \cDEF$ such that the sublattice $\langle [\alpha,\beta], \LHD\rangle\subseteq \langle\DEF,\LHD\rangle$ has the structure of the atomless Boolean algebra? In particular, is it true for $\alpha = \Cut$ and $\beta = \CTm$? Does the answer change if we drop the requirement that the algebra is atomless?

\bigskip

The last question is motivated by the typical uses of the truth predicate in obtaining finite axiomatizations of infinite theories. See \cite{truth_feasible} for relevant context and definitions.

\paragraph{Question 5} Is there a definition of truth $\alpha$ (with a formula $\Theta(x)$ as an interpretation of the truth predicate) such that $\alpha + \forall x\bigl(\textnormal{Ax}_{\PA}(x)\rightarrow \Theta(x)\bigr)$ has at most polynomial speed-up over $\PA$? $\textnormal{Ax}_{\PA}(x)$ is the canonical formula which strongly represents the set of axioms of $\PA$ in $\EA$.


\bibliographystyle{plain}
\bibliography{sonata.bib}

\begin{thebibliography}{10}

\bibitem{bek05}
Lev Beklemishev.
\newblock Reflection principles and provability algebras in formal arithmetic.
\newblock {\em Russian Mathematical Surveys}, 60(2):197--268, 2005.

\bibitem{bekpakhannals}
Lev Beklemishev and Fedor Pakhomov.
\newblock Reflection algebras and conservation results for theories of iterated
  truth.
\newblock {\em Annals of Pure and Applied Logic}, 173(5):103093, 2022.

\bibitem{blanck2017}
Rasmus Blanck.
\newblock {\em Contributions to the Metamathematics of Arithmetic: Fixed
  Points, Independence, and Flexibility}, volume~30 of {\em Acta Philosophica
  Gothoburgensia}.
\newblock University of Gothenburg, 2017.

\bibitem{Handbook_Prf_Th}
Samuel Buss, editor.
\newblock {\em Handbook of proof theory}.
\newblock Elsevier, 1998.

\bibitem{dimgai82}
Costas Dimitracopoulos and Haim Gaifman.
\newblock Fragments of {P}eano’s arithmetic and the {MRDP} theorem.
\newblock In {\em Logic and Algorithmic}, volume~30 of {\em Monographie de
  L’Enseignement Mathématique}. Universit\'e de Gen\`eve, 1982.

\bibitem{enayat_lelyk_axiom}
Ali Enayat and Mateusz Łełyk.
\newblock Axiomatizations of {P}eano arithmetic: A truth-theoretic view.
\newblock {\em The Journal of Symbolic Logic}, page 1–30, 2022.

\bibitem{truth_feasible}
Ali Enayat, Mateusz Łełyk, and Bartosz Wcisło.
\newblock Truth and feasible reducibility.
\newblock {\em The Journal of Symbolic Logic}, 85(1):367–421, 2020.

\bibitem{comparing}
Mateusz Łełyk.
\newblock Comparing axiomatic theories of truth.
\newblock {\em Studia Semiotyczne}, 33(2):255--286, 2019.

\bibitem{lelyk_global}
Mateusz Łełyk.
\newblock Model theory and proof theory of the global reflection principle.
\newblock {\em The Journal of Symbolic Logic}, 88(2):738–779, 2023.

\bibitem{lelyk_wcislo_universal}
Mateusz Łełyk and Bartosz Wcisło.
\newblock Universal properties of truth.
\newblock https://arxiv.org/abs/2304.00370.

\bibitem{wcislyk_positive}
Mateusz Łełyk and Bartosz Wcisło.
\newblock Models of positive truth.
\newblock {\em The Review of Symbolic Logic}, 12(1):144–172, 2019.

\bibitem{lelyk_wcislo_local}
Mateusz Łełyk and Bartosz Wcisło.
\newblock Local collection and end-extensions of models of compositional truth.
\newblock {\em Annals of Pure and Applied Logic}, 172(6):102941, 2021.

\bibitem{gratz_lattice}
George Gr\"atzer.
\newblock {\em Lattice Theory: Foundation}.
\newblock Birkhauser, 2011.

\bibitem{hapu98}
Petr H\'ajek and Pavel Pudl\'ak.
\newblock {\em Metamathematics of first-order arithmetic}.
\newblock Springer, 1998.

\bibitem{halbach_disq_anal}
Volker Halbach.
\newblock Disquotational truth and analyticity.
\newblock {\em The Journal of Symbolic Logic}, 66(4):1959–1973, 2001.

\bibitem{kaye}
Richard Kaye.
\newblock {\em Models of {P}eano {A}rithmetic}.
\newblock Oxford{:} Clarendon Press, 1991.

\bibitem{Handbook_BAs}
James Monk and Robert Bonnet, editors.
\newblock {\em Handbook of Boolean algebras}.
\newblock Elsevier, 1989.

\bibitem{mycielski}
Jan Mycielski.
\newblock A lattice of interpretability types of theories.
\newblock {\em The Journal of Symbolic Logic}, 42(2):297--305, June 1977.

\bibitem{pakhomov_visser}
Fedor Pakhomov and Albert Visser.
\newblock On a question of {K}rajewski’s.
\newblock {\em The Journal of Symbolic Logic}, 84:343 -- 358, 2019.

\bibitem{pettigrew2008}
Richard Pettigrew.
\newblock On interpretations of bounded arithmetic and bounded set theory.
\newblock {\em Notre Dame Journal of Formal Logic}, 50:141--151, 2008.

\bibitem{simpson}
Stephen~G. Simpson.
\newblock {\em Subsystems of Second Order Arithmetic}.
\newblock Perspectives in Logic. Cambridge University Press, 2 edition, 2009.

\bibitem{smith_nonstandard}
Stuart~T. Smith.
\newblock Nonstandard definability.
\newblock {\em Annals of Pure and Applied Logic}, 42(1):21--43, 1989.

\bibitem{vermeer}
Stijn Vermeeren.
\newblock Embedding into the countable atomless {B}oolean algebra, 2010.
\newblock https://stijnvermeeren.be/files/mathematics/embeddings.pdf.

\bibitem{viss19enayat}
Albert Visser.
\newblock Enayat theories, 2019.
\newblock https://arxiv.org/abs/1909.08877.

\bibitem{viss_sivs}
Albert Visser.
\newblock The small-is-very-small principle.
\newblock {\em Mathematical Logic Quarterly}, 65(4):453--478, 2019.

\bibitem{enayat_visser_new}
Albert Visser and Ali Enayat.
\newblock New constructions of satisfaction classes.
\newblock In Kentaro Fujimoto, Jos\'e~Mart\'{i}nez Fern\'{a}ndez, Henri
  Galinon, and Theodora Achourioti, editors, {\em Unifying the Philosophy of
  Truth}. Springer Verlag, 2015.

\end{thebibliography}

\appendix
\appendixpage

\section{Characterizations of definability in terms of provability}

Now we return to the problem of unifying our methods of showing comparability (Lemma \ref{lem_comp}) and incomparability (Lemma \ref{order2lemma}) of theories of the form $\CTo_{\alpha}$, with respect to the relation of definability. We note that there is indeed a significant logical gap between the two lemmata: the sufficient condition for the definability  is given in the comparability lemma in terms of provability in $\EA$, while the necessary condition in the incomparability lemma uses a much stronger theory, $\CT_0$. The arithmetical consequences of the latter theory of truth are exhausted only by the union of all finite iterations of uniform reflection principles over $\PA$, the theory usually denoted by $\textnormal{RFN}^{<\omega}(\PA)$. A natural conjecture would be that the condition mentioned in the remark immediately following the comparability lemma is not only sufficient, but also necessary, i.e. we have the following equivalence for any $\alpha, \beta$ such that $\CTo_{\alpha}, \CTo_{\beta}\in\Delta$:
\begin{equation}\label{app_conj}\tag{$*$}
\bigl(\exists n\in\omega\ \EA\vdash \forall x (\alpha(x+\underline{n})\rightarrow\beta(x))\bigr) \iff \CTo_{\alpha}\LHD\CTo_{\beta}.
\end{equation}
As we already remarked, the left-to-right implication indeed holds. For the converse one, one could try the following strategy: assume that $\CTo_{\alpha}\LHD\CTo_{\beta}$ but for every $n\in\omega$ $\EA+\exists x (\alpha(x+n)\wedge\neg \beta(x))$ is consistent. Then by a straightforward compactness argument, one can show that there is a model $\mathcal{M}\models \EA$, in which for some number $c>\omega$, the length of the initial segment determined by $\alpha$ is at least by $c$ greater than the length of the initial segment determined by $\beta$. The same relation between $\alpha$ and $\beta$ holds in any elementary extension of $\mathcal{M}$. However, there is $\mathcal{N}$, an elementary extension of $\mathcal{M}$, in which there is a subset $T\subseteq N$ such that $(\mathcal{N},T)\models \CTm$ (see e.g. \cite{enayat_visser_new}). Consider $T_{\beta}:= \{\sigma\in N: \sigma\in T\wedge \mathcal{N}\models \beta(\dpt(\sigma))\}$. Then $(\mathcal{N}, T_{\beta})\models \CTo_{\beta}$. So there is a formula $\Theta(x)\in\LT$ such that $(\mathcal{N}, T_{\beta})\models \CTo_{\alpha}[\Theta(x)/T(x)]$. In the previous sentence, and below, for $\Phi(x)\in\LT$ and any formu $\Psi(x)$, \[\Phi[\Psi(t)/T(t)]\]
denotes the result if a substitution of a formula $\Psi(t)$ for any occurrence of a formula $T(t)$, possibly preceded by renaming the bounded variables. The last conclusion may seem contradictory, since e.g. in the standard model one can define partial truth predicates, but the truth predicate for all formulae is not definable. However, this intuition breaks down in the case of nonstadard models and truth predicates that do not have to satisfy induction axioms\footnote{We are grateful to Bartosz Wcisło for pointing out this application of Smith's theorem.}.

\begin{proposition}[Wcisło]
    Assume that $\mathcal{M}$ is a countable and recursively saturated model of $\PA$ and $c\in M$ is a nonstadard element which is definable without parameters in $\mathcal{M}$. Then, there is $T\subseteq M$ such that $(\mathcal{M}, T)\models \CTm(c)\wedge \forall x \bigl(T(x)\rightarrow \dpt(x)\leqslant c\bigr)$ and for some formula $\Theta(x)$, $(\mathcal{M}, T)\models \CTm[\Theta(x)/T(x)]$.
\end{proposition}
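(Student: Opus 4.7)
The plan is to exploit the countable recursive saturation of $\mathcal{M}$, together with the flexibility afforded by $c$ being nonstandard and parameter-free definable, to construct $T$ as the restriction to depth $\leqslant c$ of a carefully chosen full compositional truth class $S$ on $\mathcal{M}$, with $\Theta(x)$ an $\LT$-formula that defines $S$ inside $(\mathcal{M}, T)$.

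First, by the Kotlarski-Krajewski-Lachlan theorem, every countable recursively saturated model of $\PA$ admits a full compositional satisfaction class. Using the countability of $\mathcal{M}$ one can in fact carry out a back-and-forth construction that simultaneously builds $S$ and a fixed $\LT$-formula $\Theta(x)$: at each stage one decides the $S$-truth value of finitely many sentences in such a way that eventually $S$ coincides on every element of $M$ with the extension of $\Theta(x)$ in $(\mathcal{M}, S\cap\{\varphi:\dpt(\varphi)\leqslant c\})$. This is Smith's theorem on definable satisfaction classes applied to our setting: the recursive axiomatizability of $\CTm$ in $\LT$ together with countable recursive saturation guarantee that the $\LT$-type asserting ``$\Theta(x)$ defines a full compositional truth class extending $T$'' is realized.

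Once $S$ and $\Theta$ are in hand, set $T:=\{\varphi\in S : \mathcal{M}\models \dpt(\varphi)\leqslant c\}$. Then $(\mathcal{M}, T)\models \CTm(c) \wedge \forall x\bigl(T(x)\rightarrow \dpt(x)\leqslant c\bigr)$ by construction. Since $\Theta(x)$ defines $S$ in $(\mathcal{M}, T)$ and $S$ is a full compositional truth class on $\mathcal{M}$, each compositional axiom of $\CTm[\Theta(x)/T(x)]$ is automatically verified in $(\mathcal{M}, T)$.

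The main obstacle is the coordinated construction of $S$ and $\Theta$ in the second paragraph: a naive restriction of a full satisfaction class to depth $\leqslant c$ throws away crucial information, so a back-and-forth argument is needed to ensure that $T$ nevertheless encodes enough to recover $S$ via the fixed formula $\Theta$. The countability and recursive saturation of $\mathcal{M}$ are both essential --- recursive saturation is what makes the relevant recursive type realized, and countability is what powers the back-and-forth --- while the parameter-free definability of $c$ enters crucially to let $\Theta(x)$ refer to the threshold $c$ without using any new parameters.
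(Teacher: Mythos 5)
There is a genuine gap. You correctly identify the crux --- that the depth-$\leqslant c$ restriction must somehow retain enough information to recover a full truth class via a fixed formula $\Theta$ --- but you do not actually supply the mechanism, and the mechanism you gesture at would not work as described. A compositional truth class cannot be built by a back-and-forth that ``decides the $S$-truth value of finitely many sentences at each stage'': the axioms of $\CTm$ impose global constraints tying each sentence to the (possibly nonstandard) tree of its subsentences, which is precisely why the existence of full satisfaction classes on recursively saturated models is obtained via resplendency or an Enayat--Visser-style compactness construction rather than by sentence-by-sentence diagonalization. Likewise, ``the $\LT$-type asserting that $\Theta(x)$ defines a full compositional truth class is realized'' is not a type in the sense to which recursive saturation applies: types are realized by elements of the model, not by formulas, so this appeal to saturation does not go through as stated.

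What is missing is the concrete coding device. In the paper's proof one first fixes a full truth class $T'$ with $(\mathcal{M},T')\models\CTm$ recursively saturated, and then invokes Smith's theorem to produce a \emph{second} full truth class $T''$ such that membership of $a$ in $T'$ is recorded by the $T''$-truth value of the sentence $\varepsilon_c(\dot{a})$, where $\varepsilon_c$ is the parameter-free definable formula of depth exactly $c$ obtained by iterating $\varphi\mapsto\varphi\vee\varphi$ starting from $x=x$. Since every $\varepsilon_c(\dot{a})$ has depth $c$, the restriction $T$ of $T''$ to depth $\leqslant c$ retains all of this coded information, and $\Theta(x):=T[\varepsilon_c(\dot{x})]$ then defines the full truth class $T'$ (not $T''$ itself) in $(\mathcal{M},T)$. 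Note that the truth class defined by $\Theta$ need not be, and in this construction is not, the one whose restriction is $T$; your proposal insists on the harder self-referential variant, in which $\Theta$ recovers the very class $S$ that was restricted, without explaining how to arrange this. Without a definable family of depth-$c$ carrier sentences indexed by all of $M$ --- which is also exactly where the parameter-free definability of $c$ enters --- there is no way for a predicate supported on sentences of depth $\leqslant c$ to encode a full truth class, and the argument does not close.
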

In other words, for an arbitrary countable recursively satrurated model and for an arbitrary nonstandard definable $c$, one can find a truth predicate for sentences of depth at most $c$, which can define a full truth predicate for all sentences in the sense of $\mathcal{M}$.

\begin{proof}[Sketch of proof]
Fix $\mathcal{M}$ and $c$ as in the assumptions.  Let $\varepsilon_k(x)$ be a formula defined inductively
\begin{align*}
    \varepsilon_0(x)&:= x=x\\
    \varepsilon_{k+1}(x)&:= \varepsilon_k(x)\vee\varepsilon_k(x)
\end{align*}
Thus the syntactic tree of $\varepsilon_k(x)$ is a full binary tree of depth $k$ with internal vertices and root labelled by disjunctions and with the formula $x=x$ in the leaves. Let $T'\subseteq M$ be such that $(\mathcal{M},T')\models \CTm$ and $(\mathcal{M}, T')$ is recursively saturated (such a $T'$ exists by the conservativity of $\CTm$ over $\PA$ and the chronic resplendency of countable recursively saturated models). Then, by Theorem 3.3 of \cite{smith_nonstandard}, there is a $T''\subseteq M$ such that $(\mathcal{M}, T'')\models \CTm$  and\footnote{Smith's theorem is stated for a satisfaction class, but it also works for models of $\CTm$.}
\[T':= \{a\in M : (\mathcal{M}, T'')\models T''[\varepsilon_c(\dot{a})]\}.\]
Observe now that $\varepsilon_c(x)$ is a formula of depth $c$ and it is definable without parameters (since it is definable from $c$). Hence, for $T:= \{\sigma\in M : \sigma\in T''\wedge \mathcal{M}\models \dpt(\sigma)\leqslant c\}$, set $T'$ is definable without parameters in $(\mathcal{M}, T)\models \CTm(c)\wedge \forall x \bigl(T(x)\rightarrow \dpt(x)\leqslant c\bigr).$
\end{proof}

Obviously, the above example is not sufficient to disprove conjecture \eqref{app_conj}. However, we see no way to fix the proof strategy described above.

\medskip
Having said that, we note that if we assume that our definitions of truth extend $\PA$ instead of $\EA$ (i.e. are "finitely axiomatisable modulo $\PA$"), then we have the following version of the incomparability lemma. To motivate the formulation of the thesis, let us observe that there is a constant $k$ such that for all $n>0$, $\Tr_n(x)$ (see "The arithmetization of syntax and metalogic" in the preliminaries) has depth at most $k\cdot n$. Let $k_0$ be the least such $k$.

\begin{proposition}\label{prop_conj_PA}
For all $\alpha$, $\beta$ such that $\CTo_{\alpha}, \CTo_{\beta}\in\Delta$,
\[(\CTo_{\alpha}+\PA)\LHD(\CTo_{\beta}+\PA)\Longrightarrow\bigl(\exists n\in\omega\ \PA\vdash \forall x (\alpha(k_0\cdot x+n)\rightarrow\beta(x))\bigr)\text{.}\]
\end{proposition}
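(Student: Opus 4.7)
The plan is to carry out the semantic argument underlying Lemma \ref{order2lemma} with $\PA$-provable conclusions in place of $\CTz$-provable ones; the factor $k_0$ will enter through the depth bound $\dpt(\Tr_n)\leq k_0 n$. The proof is by contradiction: fix an $\LT$-formula $\Theta$ witnessing the definability $(\CTo_\alpha+\PA)\LHD(\CTo_\beta+\PA)$, and suppose that for no $n\in\omega$ does $\PA$ prove $\forall x (\alpha(k_0 x + n)\rightarrow\beta(x))$. Using compactness, the downward closure of $\alpha$-cuts, and passage to a countable recursively saturated extension, one obtains $\mathcal{M}\models \PA$ together with $c,e\in M$, where $e$ is nonstandard and $\mathcal{M}\models \alpha(k_0 c + e)\wedge \neg\beta(c)$. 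By $\PA$-induction let $a\leq c$ be least with $\neg\beta(a)$; as $\EA\vdash\beta(\num{n})$ for each standard $n$, the element $a$ is nonstandard, so $\beta$'s cut in $\mathcal{M}$ is exactly $[0,a)$.

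Next, combine the recursive saturation of $\mathcal{M}$ with a Lachlan-type theorem on partial inductive satisfaction classes to produce $T\subseteq M$ with $(\mathcal{M},T)\models \CTm(a)$, and truncate it to $T^\sharp := \{\varphi \in T : \mathcal{M}\models \dpt(\varphi) < a\}$. Then $(\mathcal{M}, T^\sharp)\models \CTo_\beta + \PA$ while $T^\sharp$ is identically false on formulas of depth $\geq a$. The hypothesis now applies: letting $T'\subseteq M$ be the set defined in $(\mathcal{M},T^\sharp)$ by the formula $\Theta(x)$, one has $(\mathcal{M},T')\models \CTo_\alpha$, so in particular $T'$ is compositional at depth $k_0 c + e$. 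By uniqueness of the compositional extension above the atomic clauses, $T'$ must agree with real $\mathcal{M}$-satisfaction on all $\LA$-formulas of depth $\leq k_0 c + e$.

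The crux, and the main obstacle, is a complexity lemma: for every $\LT$-formula $\Theta$ of standard depth $d$ there is $n_0\in\omega$ such that $\EA$ proves, for all $y$ and $N$, that if the arithmetical formula obtained by substituting $\Tr_N$ for $T$ in $\Theta$ is compositional at depth $y$, then $k_0 y \leq N + n_0$. The intuition is that such a substitution yields a formula of depth at most $d + k_0 N$, which by a formalised Tarski-style argument cannot act as a compositional truth predicate at depths substantially above $(N + n_0)/k_0$; the constant $k_0$ is inherited precisely from $\dpt(\Tr_n)\leq k_0 n$. Granting this lemma, substitute $\Tr_N$ for $T$ in the provability statement furnished by the hypothesis and use the $\EA$-provable compositionality of $\Tr_N$ up to depth $N$ to deduce $\PA\vdash \forall z(\beta(z)\rightarrow z\leq N)\rightarrow \forall y(\alpha(y)\rightarrow k_0 y \leq N + n_0)$; contrapositively and optimising $N$, one obtains $\PA\vdash \forall y (\alpha(y)\rightarrow \beta(k_0 y - n_1))$ for some standard $n_1$, which, by downward closure of $\beta$, yields $\PA\vdash \forall x (\alpha(k_0 x + n)\rightarrow \beta(x))$ with $n := k_0 (n_1 + 1)$, contradicting the assumption. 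Semantically, the same lemma applied in $(\mathcal{M},T^\sharp)$ directly forbids $T'$ from being compositional at depth $k_0 c + e > a$, giving the same contradiction in model-theoretic form.
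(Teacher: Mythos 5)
There is a genuine gap here, and it is precisely the failure mode that the paper's Appendix A is devoted to. Your semantic argument interprets $\Theta$ over the structure $(\mathcal{M},T^{\sharp})$, where $T^{\sharp}$ is a partial satisfaction class obtained from recursive saturation. Such a class (up to a nonstandard depth $a$) is never definable in $\mathcal{M}$ by an arithmetical formula, so the set $T'$ that $\Theta$ carves out of $(\mathcal{M},T^{\sharp})$ is not of the form $\Theta[\Tr_N(x)/T(x)]$ for any $N$, and your complexity lemma --- which bounds the depth of $\Theta[\Tr_N/T]$ by $d+k_0N$ and then runs a liar argument --- simply does not apply to it. Worse, the principle you are implicitly relying on, namely that a truth predicate for depths $\leqslant a$ cannot define a compositional truth predicate at depths far beyond $a$, is \emph{false}: Wcisło's proposition in Appendix A produces, in any countable recursively saturated model, a $T$ with $(\mathcal{M},T)\models\CTm(c)\wedge\forall x(T(x)\rightarrow\dpt(x)\leqslant c)$ which nevertheless defines, via a single formula, a full compositional truth predicate for all sentences in the sense of $\mathcal{M}$. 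So the concluding step ``the same lemma applied in $(\mathcal{M},T^{\sharp})$ directly forbids $T'$ from being compositional at depth $k_0c+e>a$'' cannot be rescued by any depth count; the particular choice of the partial satisfaction class matters essentially. (The auxiliary claim that $T'$ must agree with ``real $\mathcal{M}$-satisfaction'' up to depth $k_0c+e$ is also unavailable: there is no such relation for nonstandard formulas, and compositionality alone does not determine a unique extension on them.)

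The syntactic version of your crux has a parallel defect. For standard $N$ the antecedent $\forall z(\beta(z)\rightarrow z\leqslant N)$ is refutable already in $\EA$, since $\CTo_{\beta}\in\Delta$ gives $\EA\vdash\beta(\num{n})$ for every standard $n$; hence the implication you extract is vacuously provable and nothing follows from ``optimising $N$''. For nonstandard $N$, the statement ``$\Theta[\Tr_N/T]$ is compositional at depth $y$'' concerns a formula of depth about $k_0N$ and cannot be expressed uniformly in $N$ inside $\PA$ without a device for evaluating nonstandard formulas. Supplying that device is exactly what the paper's proof does and what your proposal omits: using the reflexivity of $\PA$, overspill, and the arithmetized completeness theorem, one builds a model $\mathcal{N}$ \emph{strongly interpretable} in $\mathcal{M}$ (so that $\mathcal{M}$ carries a definable satisfaction relation $\models^{\mathcal{M}}$ for $\mathcal{N}$), takes the greatest $e$ with $\mathcal{N}\models^{\mathcal{M}}\beta(e)$, and interprets $T$ by the \emph{internal, $\mathcal{M}$-definable} partial truth predicate $\Tr_e$. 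Only then is $\Psi:=\Phi[\Tr_e(x)/T(x)]$ an arithmetical formula of controlled depth $k_0e+k$, and only then does the liar sentence for $\Psi$, of depth $k_0e+l$ with $l$ standard, fall within the range $k_0a+c$ on which $\Psi$ is supposed to be compositional. Your identification of where $k_0$ comes from and your use of a Tarski-style liar argument are the right ingredients, but the scaffolding that legitimises them --- the strongly interpretable model and its internal satisfaction relation --- is missing, and without it the argument collapses onto the counterexample of Appendix A.
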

\begin{proof}[Sketch of proof]
     In the proof we deal with two satisfaction relations for models that are strongly interpretable in a fixed model $\mathcal{M}\models\PA$. For a model $\mathcal{K}$ strongly interpretable in $\mathcal{M}$, $\mathcal{K}\models \phi$ denotes the satisfiability of $\phi$ by $\mathcal{K}$ according to the standard satisfaction relation (as defined outside of $\mathcal{M}$), and $\mathcal{K}\models^{\mathcal{M}} \phi$ denotes the satisfiability according to the $\mathcal{M}$-definable satisfaction relation. We observe that whenever $\phi$ is a standard sentence and $\mathcal{K}$ is any model which is strongly interpretable in $\mathcal{M}$, then $\mathcal{K}\models \phi$ iff $\mathcal{K}\models^{\mathcal{M}}\phi$ ("satisfaction is absolute"). We now pass to the argument.

     \medskip
     As previously, suppose that $(\CTo_{\alpha}+\PA)\LHD(\CTo_{\beta}+\PA)$ and that there exists a nonstandard model $\mathcal{M}\models \PA$ with nonstandard numbers $a,c\in M$ such that $\mathcal{M}\models \alpha(k_0\cdot a+c)\wedge \neg\beta(a)$. Let $\Phi$ be such that $\CTo_{\beta}+\PA\vdash \CTo_{\alpha}[\Phi(x)/T(x)]$. Since both $\alpha$ and $\beta$ define initial segments, this means that in $\mathcal{M}$ the length of the initial segment defined by $\alpha$ is much greater than the length of the initial segment defined by $\beta$. Let $n$ be such that both $\alpha$ and $\beta$ are of depth at most $n$ and let $k$ be the depth of $\Phi$. By the reflexivity of $\PA$ and an easy overspill argument there is a nonstandard number $d$ such that in $\mathcal{M}$ there is no proof of $0=1$ from the axioms of $\ISd$ and true arithmetical sentences of depth at most $n$, i.e.
    \[\mathcal{M}\models \Con(\ISd+\Tr_{n}).\]
    By the arithmetized completeness theorem, there is a strongly interpretable in $\mathcal{M}$ model $\mathcal{N}$ such that $\mathcal{N}\models^{\mathcal{M}}\ISd+\Tr_{n}$. 
    In particular, $\mathcal{M}$ is ($\mathcal{M}$-definably isomorphic to) an $n$-elementary initial segment of $\mathcal{N}$, hence we have $\mathcal{N}\models\alpha(k_0\cdot a+c)\wedge \neg\beta(a)$. Moreover, from an external perspective, $\mathcal{N}\models \PA$ (but $\mathcal{M}$ may not know this).

    \medskip
     We now work in $\mathcal{M}$.  Let $e$ be the greatest element such that $\mathcal{N}\models^{\mathcal{M}} \beta(e)$. Then $e\leqslant a$. For any $y\in M$, a formula $\Tr_{y}(x)$ is a partial truth predicate for sentences of complexity $y$ in the sense of $\mathcal{N}$. Even though for nonstandard $y$, $\Tr_y$ is a nonstandard formula, we can make sense of it using $\models^{\mathcal{M}}$. Observe that $\mathcal{N}\models^{\mathcal{M}} \CTo_{\beta}[\Tr_{e}(x)/T(x)]$. Put $T_e:= \{x\in N : \mathcal{N}\models^{\mathcal{M}} \Tr_e(x)\}$. Since $T_e$ is a definable subset of $\mathcal{N}$, there is an $\mathcal{M}$-definable satisfaction relation for the model $(\mathcal{N}, T_e)$. Seen from the external (to $\mathcal{M}$) perspective, $(\mathcal{N}, T_e)\models \CTo_{\beta}+\PA$, hence also it is true that $(\mathcal{N}, T_e)\models \CTo_{\alpha}[\Phi(x)/T(x)]$. Since $\CTo_{\alpha}[\Phi(x)/T(x)]$ is a standard sentence, $(\mathcal{N}, T_e)\models^{\mathcal{M}} \CTo_{\alpha}[\Phi(x)/T(x)]$. Let $\Psi:= \Phi[\Tr_e(x)/T(x)]$. Then $\Psi$ is an arithmetical formula (in the sense of $\mathcal{M}$) of depth at most $k_0\cdot e+k$.
    It follows that $\mathcal{N}\models^{\mathcal{M}} \CT^-(k_0\cdot a+c)[\Psi(x)/T(x)]$, so in $\mathcal{N}$, $\Psi$ is a truth predicate for sentences of complexity $k_0\cdot a+c$. Let $\lambda$ be the $\Psi$-liar sentence, i.e. a sentence constructed as in the standard proof of Tarski's theorem on the undefinability, such that $\mathcal{N}\models^{\mathcal{M}}\lambda \leftrightarrow \neg\Psi(\qcr{\lambda})$. By the inspection of the usual proof of the fixed point lemma, $\lambda$ is of depth $k_0\cdot e +l$ for some standard number $l$. In particular $\mathcal{N}\models^{\mathcal{M}}\lambda\leftrightarrow \Psi(\qcr{\lambda})$. A contradiction.
\end{proof}

What makes the above proof possible is the reflexivity of $\PA$. We use it to show that every model of $\PA$ strongly interprets a suitable model of $\PA$.

\section{The lattice structure of the definability order}\label{appendix_supinf}

\thmsupinf*
\begin{proof}
The proof has three parts. First, we show that $\alpha\lor\beta$ and $\alpha\land\repsymb_{\alpha}(\beta)$ are syntactically conservative over $\EA$ definitions of truth (whenever $\alpha$ and $\beta$ are such). Then we show that $\alpha\lor\beta$ is an infimum and $\alpha\land\repsymb_{\alpha}(\beta)$ is a supremum of $\alpha$ and $\beta$ (the proof also shows that the operations are well defined as operations on the equivalence classes of truth definitions modulo the relation of mutual definability, i.e. if $\alpha\RLHD\alpha'$ and $\beta\RLHD\beta'$, then $\alpha\lor\beta\RLHD\alpha'\lor\beta'$ and $\alpha\land\repsymb_{\alpha}(\beta)\RLHD\alpha'\lor\repsymb_{\alpha'}(\beta')$). We complete the proof by showing that the structure satisfies distributivity axioms.

\medskip
Both $\alpha\lor\beta$ and $\alpha\land\repsymb_{\alpha}(\beta)$ extend $\EA$. Given a sentence $\sigma\in\LA$, suppose that $\alpha\lor\beta\vdash\sigma$, then in particular $\alpha\vdash\sigma$, so $\EA\vdash\sigma$ by the conservativity of $\alpha$. Now suppose that $\alpha\land\repsymb_{\alpha}(\beta)\vdash\sigma$, then
$$\emptyset\vdash\alpha\rightarrow(\repsymb_{\alpha}(\beta)\rightarrow\sigma)\text{.}$$
Since the sets of the symbols present in $\alpha$ and $\repsymb_{\alpha}(\beta)$ respectively, intersect exactly at the arithmetic symbols, by Craig's interpolation theorem (see 2.5.5 in \cite{Handbook_Prf_Th}), there exists an arithmetical sentence $\tau$ such that
$$\emptyset\vdash\alpha\rightarrow\tau\text{ \ \ and \ \ }\emptyset\vdash\tau\rightarrow(\repsymb_{\alpha}(\beta)\rightarrow\sigma)\text{,}$$
and hence $\emptyset\vdash\repsymb_{\alpha}(\beta)\rightarrow(\tau\rightarrow\sigma)$. The arithmetical consequences of $\alpha$ and $\repsymb_{\alpha}(\beta)$ are exactly the same, so $\emptyset\vdash\repsymb_{\alpha}(\beta)\rightarrow\tau$, and therefore $\repsymb_{\alpha}(\beta)\vdash\sigma$. Hence $\EA\vdash\sigma$, since $\repsymb_{\alpha}(\beta)$ is syntactically conservative over $\EA$.

\medskip
Now suppose that $\Theta_{\alpha}(x)$ and $\Theta_{\beta}(x)$ act as truth-predicates in $\alpha$ and $\beta$ respectively. Then $\Theta_{\alpha}(x)$ also acts as a truth-predicate in $\alpha\land\repsymb_{\alpha}(\beta)$. As a truth-predicate in $\alpha\lor\beta$ we can use
$$(\alpha\rightarrow\Theta_{\alpha}(x))\land(\lnot\alpha\rightarrow\Theta_{\beta}(x))\text{.}$$

We now come to the second part of the proof. We see that both $\alpha$ and $\beta$ define $\alpha\lor\beta$, as each of them proves it. Similarly, $\alpha\land\repsymb_{\alpha}(\beta)\RHD\alpha\text{, }\beta$. Now assume that $\alpha\RHD\gamma$ and $\beta\RHD\gamma$. Suppose that the former definability is given by the translation of the form $\mathcal{L}_{\gamma}\ni\mathfrak{R}(\overline{x})\longmapsto\varphi_{\mathfrak{R}}(\overline{x})\in\mathcal{L}_{\alpha}$ and that the second is given by the translation of the form $\mathcal{L}_{\gamma}\ni\mathfrak{R}(\overline{x})\longmapsto\psi_{\mathfrak{R}}(\overline{x})\in\mathcal{L}_{\beta}$. Then "$\alpha\lor\beta\RHD\gamma$" is given by the translation of the form
$$\mathfrak{R}(\overline{x})\longmapsto(\alpha\rightarrow\varphi_{\mathfrak{R}}(\overline{x}))\land(\lnot\alpha\rightarrow\psi_{\mathfrak{R}}(\overline{x}))\text{.}$$
Now assume that $\delta\RHD\alpha$ and $\delta\RHD\beta$. Suppose that the first is given by the translation of the form $\mathcal{L}_{\alpha}\ni\mathfrak{R}(\overline{x})\longmapsto\varphi_{\mathfrak{R}}(\overline{x})\in\mathcal{L}_{\delta}$ and that the second is given by the translation of the form $\mathcal{L}_{\beta}\ni\mathfrak{R}(\overline{x})\longmapsto\psi_{\mathfrak{R}}(\overline{x})\in\mathcal{L}_{\delta}$. Then "$\delta\RHD\alpha\land\repsymb_{\alpha}(\beta)$" is given by the translation of the form:
\begin{itemize}
    \item $\mathfrak{R}(\overline{x})\longmapsto\varphi_{\mathfrak{R}}(\overline{x})$, for $\mathfrak{R}\in\mathcal{L}_{\alpha}$,
    \item $\mathfrak{R}(\overline{x})\longmapsto\psi_{\repsymb_{\alpha}^{-1}(\mathfrak{R})}(\overline{x})$, for $\mathfrak{R}\in\mathcal{L}_{\repsymb_{\alpha}(\beta)}$.
\end{itemize}

Now we come to the third part of the proof. First, we see that "$x\cap(y\cup z)=(x\cap y)\cup(x\cap z)$" holds: given definitions of truth $\alpha$, $\beta$ and $\gamma$,
$$\alpha\land\repsymb_{\alpha}(\beta\lor\gamma)=(\alpha\land\repsymb_{\alpha}(\beta))\lor(\alpha\land\repsymb_{\alpha}(\gamma))\text{.}$$
Now we show that "$x\cup(y\cap z)=(x\cup y)\cap(x\cup z)$" holds, i.e. given definitions of truth $\alpha$, $\beta$~and~$\gamma$,
$$\alpha\lor(\beta\land\repsymb_{\beta}(\gamma))\RLHD(\alpha\lor\beta)\land\repsymb_{\alpha\lor\beta}(\alpha\lor\gamma)\text{.}$$
The definability $\alpha\lor(\beta\land\repsymb_{\beta}(\gamma))\RHD(\alpha\lor\beta)\land\repsymb_{\alpha\lor\beta}(\alpha\lor\gamma)$ is given by the translation of the form:
\begin{itemize}
    \item $\mathfrak{R}(\overline{x})\longmapsto\mathfrak{R}(\overline{x})$, for $\mathfrak{R}\in\mathcal{L}_{\alpha}\cup\mathcal{L}_{\beta}$,
    \item $\mathfrak{R}(\overline{x})\longmapsto\repsymb_{\alpha\lor\beta}^{-1}(\mathfrak{R})(\overline{x})$, for $\mathfrak{R}\in\mathcal{L}_{\repsymb_{\alpha\lor\beta}(\alpha)}\setminus\mathcal{L}_{\repsymb_{\alpha\lor\beta}(\gamma)}$,
    \item $\mathfrak{R}(\overline{x})\longmapsto\repsymb_{\beta}(\repsymb_{\alpha\lor\beta}^{-1}(\mathfrak{R}))(\overline{x})$, for $\mathfrak{R}\in\mathcal{L}_{\repsymb_{\alpha\lor\beta}(\gamma)}\setminus\mathcal{L}_{\repsymb_{\alpha\lor\beta}(\alpha)}$,
    \item $\mathfrak{R}(\overline{x})\longmapsto(\alpha\rightarrow\repsymb_{\alpha\lor\beta}^{-1}(\mathfrak{R})(\overline{x}))\land(\lnot\alpha\rightarrow\repsymb_{\beta}(\repsymb_{\alpha\lor\beta}^{-1}(\mathfrak{R}))(\overline{x}))$, for $\mathfrak{R}\in\mathcal{L}_{\repsymb_{\alpha\lor\beta}(\alpha)}\cap\mathcal{L}_{\repsymb_{\alpha\lor\beta}(\gamma)}$.
\end{itemize}
We complete the proof claiming that the definability $(\alpha\lor\beta)\land\repsymb_{\alpha\lor\beta}(\alpha\lor\gamma))\RHD\alpha\lor(\beta\land\repsymb_{\beta}(\gamma))$ is given by the translation which maps $\mathfrak{R}(\overline{x})$ to:
\begin{itemize}
    \item $\mathfrak{R}(\overline{x})$, for $\mathfrak{R}\in\mathcal{L}_{\beta}\setminus\mathcal{L}_{\alpha}$,
    \item $\bigl[\repsymb_{\alpha\lor\beta}(\alpha)\rightarrow\repsymb_{\alpha\lor\beta}(\mathfrak{R})(\overline{x})\bigr]\land\bigl[\lnot\repsymb_{\alpha\lor\beta}(\alpha)\rightarrow\bigl((\alpha\rightarrow\mathfrak{R}(\overline{x}))\land(\lnot\alpha\rightarrow\repsymb_{\alpha\lor\beta}(\repsymb_{\beta}^{-1}(\mathfrak{R}))(\overline{x}))\bigr)\bigr]$, for~$\mathfrak{R}\in\mathcal{L}_{\alpha}\cap\mathcal{L}_{\repsymb_{\beta}(\gamma)}$,
    \item $(\repsymb_{\alpha\lor\beta}(\alpha)\rightarrow\repsymb_{\alpha\lor\beta}(\mathfrak{R})(\overline{x}))\land(\lnot\repsymb_{\alpha\lor\beta}(\alpha)\rightarrow\mathfrak{R}(\overline{x}))$, for $\mathfrak{R}\in\mathcal{L}_{\alpha}\setminus\mathcal{L}_{\repsymb_{\beta}(\gamma)}$,
    \item $\repsymb_{\alpha\lor\beta}(\repsymb_{\beta}^{-1}(\mathfrak{R}))(\overline{x})$, for $\mathfrak{R}\in\mathcal{L}_{\repsymb_{\beta}(\gamma)}\setminus\mathcal{L}_{\alpha}$.
\end{itemize}
Notice that $\mathcal{L}_{\alpha}\cup\mathcal{L}_{\beta}\cup\mathcal{L}_{\repsymb_{\beta}(\gamma)}=(\mathcal{L}_{\beta}\setminus\mathcal{L}_{\alpha})\oplus(\mathcal{L}_{\alpha}\cap\mathcal{L}_{\repsymb_{\beta}(\gamma)})\oplus(\mathcal{L}_{\alpha}\setminus\mathcal{L}_{\repsymb_{\beta}(\gamma)})\oplus(\mathcal{L}_{\repsymb_{\beta}(\gamma)}\setminus\mathcal{L}_{\alpha})$ as $\mathcal{L}_{\alpha}\setminus\mathcal{L}_{\repsymb_{\beta}(\gamma)}\supseteq\mathcal{L}_{\alpha}\cap\mathcal{L}_{\beta}$.

\end{proof}


\end{document}